\documentclass[reqno,12pt]{amsart}
\usepackage{DocumentPreamble}

\AddTitle{Finding all cospectral mates over a number field}
\AddAuthor{Alexander Van Werde}
\email{\href{mailto:a.van.werde@uni-muenster.de}{a.van.werde@uni-muenster.de}}
\address{University of Münster, Germany}
 
\AddMSC{05C50}
\AddMSC{15B36}
\AddMSC{11C20}
\AddMSC{11R04}
\AddMSC{05C60}

\AddKeyword{Integer matrix}
\AddKeyword{Dedekind domain}
\AddKeyword{discriminant}
\AddKeyword{sufficient condition}
\AddKeyword{Krylov subspace}
\AddKeyword{computational algebraic number theory}
\AddKeyword{spectral graph theory}

\begin{document} 
\begin{abstract}   
    We investigate a notion of cospectrality for integer matrices that is parameterized by algebraic number fields.
    Given a number field and a symmetric integer matrix, we wonder when conjugating the integer matrix by an orthogonal matrix with entries in the given field can produce new integer matrices.

    Our results concern sufficient conditions for the associated notion of spectral determination, and we give constraints on the orthogonal matrices when the conditions are not applicable. 
    The results use the discriminant of the characteristic polynomial and properties of Krylov subspaces. 
    We leverage the theory to develop an algorithm to find all cospectral mates over a given (small) field. 
    An implementation of the algorithm is made available.
\end{abstract}

\maketitle  

\begin{minipage}{0.9\textwidth}
\let\clearpage\relax
{\scriptsize
\tableofcontents 
} 
\end{minipage} 
\newpage

\section{Introduction}
Eigenvalues of symmetric integer matrices frequently reveal rich combinatorial information. 
For example, spectral methods are often applied in the study of graphs and their signed or weighted variants \cite{belardo2019open,mohar1993eigenvalues,brouwer2011spectra,godsil2013algebraic}.  
A classical question in this area asks when the spectrum gives complete information.

More precisely, consider a symmetric integer matrix $\bX\in \bbZ^{n\times n}$ and let $\phi_{\bX}(x) \de \det(x\bI -\bX)$ be its characteristic polynomial.
Then, a symmetric matrix $\bY\in \bbZ^{n\times n}$ is a \emph{cospectral mate} of $\bX$ if $\phi_{\bX} = \phi_{\bY}$. 
We say that $\bX$ is \emph{determined by its spectrum} if the only cospectral mates are the trivial ones $\bY = \bS^{\T} \bX \bS$ following from conjugation of $\bX$ by a signed permutation $\bS$. 
Here, a \emph{signed permutation} is a matrix with a single nonzero entry $\pm 1$ in every row and column.

If $\bX$ and $\bY$ are adjacency matrices of graphs and thus have nonnegative entries, then the signs may be removed and one recovers the notion of graph isomorphism. 
For signed graphs, conjugation by signed permutation corresponds to \emph{switching equivalence} \cite{belardo2019open}.
Questions concerning adjacency cospectrality of graphs date back to the 1956 work of G\"unthard and Primas on quantum chemistry \cite{gunthard1956zusammenhang}. 
The topic can also be viewed as a discrete variant on the question ``Can one hear the shape of a drum?'' that was popularized by Kac in 1966 \cite{kac1966can}. 

There is unfortunately no known general method to decide whether a matrix is determined by spectrum, or to find its cospectral mates.  
(That is, aside from exhaustive enumeration which quickly becomes intractable \cite{brouwer2009cospectral,haemers2004enumeration}.) 
In many cases, however, an improved understanding has been achieved through the analysis of more restrictive notions such as cospectrality through small \emph{switching methods} \cite{godsil1982constructing,abiad2026counting,schwenk1973almost} or \emph{generalized cospectrality}  \cite{johnson1980note,wang2006sufficient,qiu2023generalized,wang2026graph,wang2017simple}.
Note that $\bX$ and $\bY$ are cospectral mates if and only if there exists an orthogonal matrix $\bQ$ with $\bQ^{\T}\bX \bQ = \bY$. 
The restricted notions can be interpreted as putting additional constraints on the orthogonal matrix. 

Switching methods restrict to orthogonal matrices of the form $\operatorname{diag}(\bQ_{m}, \bI_{n-m})$ with a small nontrivial block of size $m\ll n$; see the unified framework of Abiad, Van de Berg, and Simoens \cite[Definition 1]{abiad2026counting}.
Restricting the block size $m$ can be interpreted as focusing on local obstructions for spectral characterization. 
Classical examples are \emph{Godsil--McKay switching} \cite{godsil1982constructing} and \emph{Schwenk switching} \cite{schwenk1973almost}. 
An efficient algorithmic implementation of switching methods will appear in a forthcoming work of Simoens and Van Overberghe \cite{simoens2026prep}. 

Generalized cospectrality restricts the orthogonal matrix $\bQ$ to satisfy $\bQ \zeta = \eta$ for given integer vectors $\zeta, \eta \in \bbZ^n$; see Wang et~al.~\cite{wang2006sufficient,qiu2023generalized,wang2026graph,wang2017simple}. 
Notably, sufficient conditions that can certify that a matrix is determined by its generalized spectrum have been established \cite{wang2006sufficient,qiu2023generalized,wang2026graph,wang2017simple}.
Such conditions are conjectured to apply with non-vanishing frequency to high-dimensional matrices  \cite{lvov2026satisfaction,wang2017simple}.

Moreover, algorithms to find all generalized cospectral mates have recently been developed by Wang and Wang \cite{wang2025haemers}.
They concern the case when the orthogonal matrix has to fix the all-ones vector. 
That is, $\bQ e = e$ with $e=(1,\ldots,1)^{\T}$.
This notion has particular significance, as there are a number of equivalent formulations dating back to the 1980 work of Johnson and Newman \cite{johnson1980note}. 

\pagebreak[4]
Generalized cospectrality has the pleasing feature that it does not require any a-priori bound on the size of obstructions to spectral characterization.  
However, while switching methods naturally give a hierarchy of relaxations with $m = n$ recovering the classical notion of cospectrality, generalized cospectrality is a legitimately restricted notion that does not necessarily recover the classical variant.
 
The idea of the present paper is as follows: while it may not always be possible to relate cospectral $\bX$ and $\bY$ by an orthogonal matrix $\bQ$ satisfying $\bQ \zeta = \eta$ for given integer vectors $\zeta, \eta\in \bbZ^n$, we can certainly assume that the entries of $\bQ$ come from an algebraic number field. 
Indeed, the latter can be verified by using the orthogonal matrices from the eigendecomposition. 
This motivates studying the notion that arises by constraining the field:

\begin{definition}\label{def: CospectralOverK}
    Fix some number field $K$. 
    Two symmetric integer matrices $\bX,\bY \in \bbZ^{n\times n}$ are \emph{cospectral mates over $K$} if there exists some $\bQ \in K^{n\times n}$ that is orthogonal $\bQ^{\T} \bQ = \bI$ and satisfies $\bQ^{\T} \bX \bQ = \bY$.   
\end{definition}

The simplest case of $K = \bbQ$ is already of significant interest.
Indeed, this case essentially recovers generalized cospectrality, as the constraint $\bQ \zeta = \eta$ for integer $\zeta,\eta\in \bbZ^n$ implies that $\bQ$ has rational entries under mild conditions that are necessary to make the theory a useful relaxation; see \eg \cite[Lemma 2.4]{wang2006sufficient}, \cite[Lemma 2.1]{qiu2023generalized}, \cite[Theorem 1.3]{wang2026graph}.
Cospectrality over the rationals as an extension of generalized cospectrality was notably emphasized in a 2016 preprint of Wang and Yu \cite{wang2016square} who proved sufficient conditions.
See also Bhargava, Gross, and Wang \cite[Proposition 35]{bhargava2017positive} for related conditions with pairs of $p$-adic matrices.

Section \ref{sec: Results} presents our results. 
We give a condition to rule out cospectrality over an arbitrary number field $K$, and establish constraints on the potential orthogonal matrices $\bQ\in K^{n\times n}$ when the condition is not applicable. 
Moreover, we leverage the results to develop an algorithm that can find all cospectral mates of $\bX$ over $K$.  
The sufficient condition recovers \cite[Theorem 1.4]{wang2016square} when $K = \bbQ$.   
To our knowledge, the constraints on the orthogonal matrices were not previously known for $K=\bbQ$, nor were algorithms known without further restrictions.

The only assumption that is strictly required for our theory is that $\bX$ should not have repeated eigenvalues. 
In particular, this assumption is applicable with high probability to the adjacency matrix of an Erd\H{o}s--R\'enyi random graph with constant edge probability \cite{tao2017random}.
For practical applications of the algorithms, some additional assumptions are necessary to make computations tractable; see Section \ref{sec: AlgorithmicSearch}.  
Most crucially, we require that the field $K$ is not too large. 
This requirement makes Definition \ref{def: CospectralOverK} into a meaningful relaxation.

We have implemented the algorithm building on methods from \texttt{PARI} \cite{PARI2} and \texttt{SageMath} \cite{sagemath} for computational algebraic number theory. 
The source code is distributed in a repository named \texttt{COSPECTRAL}:
\begin{center}
   \url{https://github.com/Alexander-Van-Werde/cospectral}
\end{center}
Section \ref{sec: Numerical} presents numerical results applying the algorithm to random integer matrices in dimension $\leq 100$.
We find that $K=\bbQ$ is a frequent source of cospectral mates, and a nontrivial fraction of additional mates arise over larger fields like the quadratic fields $K=\bbQ(\sqrt{d})$. 
 
The idea to parametrize cospectrality by number fields arose from recent work of the author on the probability that the conjugation action of a fixed orthogonal matrix on a random integral matrix again yields an integral matrix; see Van Werde \cite{vanwerde2026exact}. 
The probabilistic theory in \cite{vanwerde2026exact} worked equally well over the rings of integers of number fields as over $\bbZ$, raising the question whether sufficient conditions could also be extended. 

Some special cases of number fields have also appeared in sufficient conditions for generalized spectral characterization of \emph{mixed graphs} \cite{wang2019generalized,ji2025mixed}.
That is, graphs with both directed and undirected edges.
The relevance of the number fields was there mainly to indicate edge directions.
For instance, Wang, Qiu and Qian \cite{wang2019generalized} considered a \emph{Hermitian adjacency matrix} with entries from $\{0,1, i,-i \}$ with $i=\sqrt{-1}$.  
They studied a notion of cospectrality that effectively restricts attention to conjugation by unitary matrices $\bU$ with entries in the Gaussian rationals $\bbQ(i)$ satisfying $\bU e = e$ with $e$ the all-ones vector \cite[Theorem 10]{wang2019generalized}.

We conclude this introduction with a few illustrative examples for Definition \ref{def: CospectralOverK}, and by outlining the structure of the remainder of the paper.

\begin{example}\label{ex: CospectralLoops}
    A minimal example of a pair of adjacency cospectral graphs with loops is visualized in Figure \ref{fig:graphs_matrices}.
    The characteristic polynomial is $x^4 -2x^3 -x^2 +2x$ which splits over $\bbQ$ with roots $2$, $\pm 1$,
    and $0$.   
    However, it turns out that these graphs are \emph{not} adjacency cospectral over the rationals. 
    
    To explain this, note that normalizing eigenvectors here requires a field extension. 
    For instance, the eigenvectors of $\bA_1$ include $(0,1,1,1)^{\T}$ and $(0,0,-1,1)^{\T}$. 
    Normalizing thus requires dividing by $\sqrt{3}$ and $\sqrt{2}$. 
    In fact, one can check that all eigenvectors of $\bA_1$ and $\bA_2$ can be normalized over $\bbQ(\sqrt{2}, \sqrt{3})$, so the graphs are certainly cospectral over the latter field. 
    Indeed, one can then find $\bQ_1,\bQ_2$ with
    \begin{equation}
        \bQ_1^{\T} \bA_1 \bQ_1 = \operatorname{diag}(2,1,-1, 0) =  \bQ_2^{\T} \bA_2 \bQ_2,   
    \end{equation}
    and hence $\bQ^{\T} \bA_1\bQ =  \bA_2 $ with $\bQ\de \bQ_1 \bQ_2^{\T}$.      
    Actually, the matrices are already cospectral over $\bbQ(\sqrt{2})$:
     { 
        \setlength{\arraycolsep}{4pt}
        \renewcommand{\arraystretch}{0.9}
    \begin{equation}
        \bQ^{\T} \bA_1 \bQ = \bA_2\ \ \textnormal{ with }\ \ \bQ = \frac{1}{\sqrt{2}} \small \begin{pmatrix}
             0 & 0 & 1 & -1 \\ 
             0 & 0 & 1 & 1\\ 
             1 & 1 & 0 & 0 \\ 
             -1 & 1 & 0 & 0
        \end{pmatrix}
    \end{equation}
    In this example, cancellations lead to cospectrality over a smaller field than is necessary to define the normalized eigenvectors of the individual matrices.
    }
\end{example} 
\begin{figure}[htbp]
    \centering
    \begin{tabular}{ 
        >{\centering\arraybackslash}m{0.17\textwidth} 
        >{\centering\arraybackslash}m{0.32\textwidth} 
        >{\centering\arraybackslash}m{0.17\textwidth} 
        >{\centering\arraybackslash}m{0.32\textwidth} 
    }
        \includegraphics[height=4.5em]{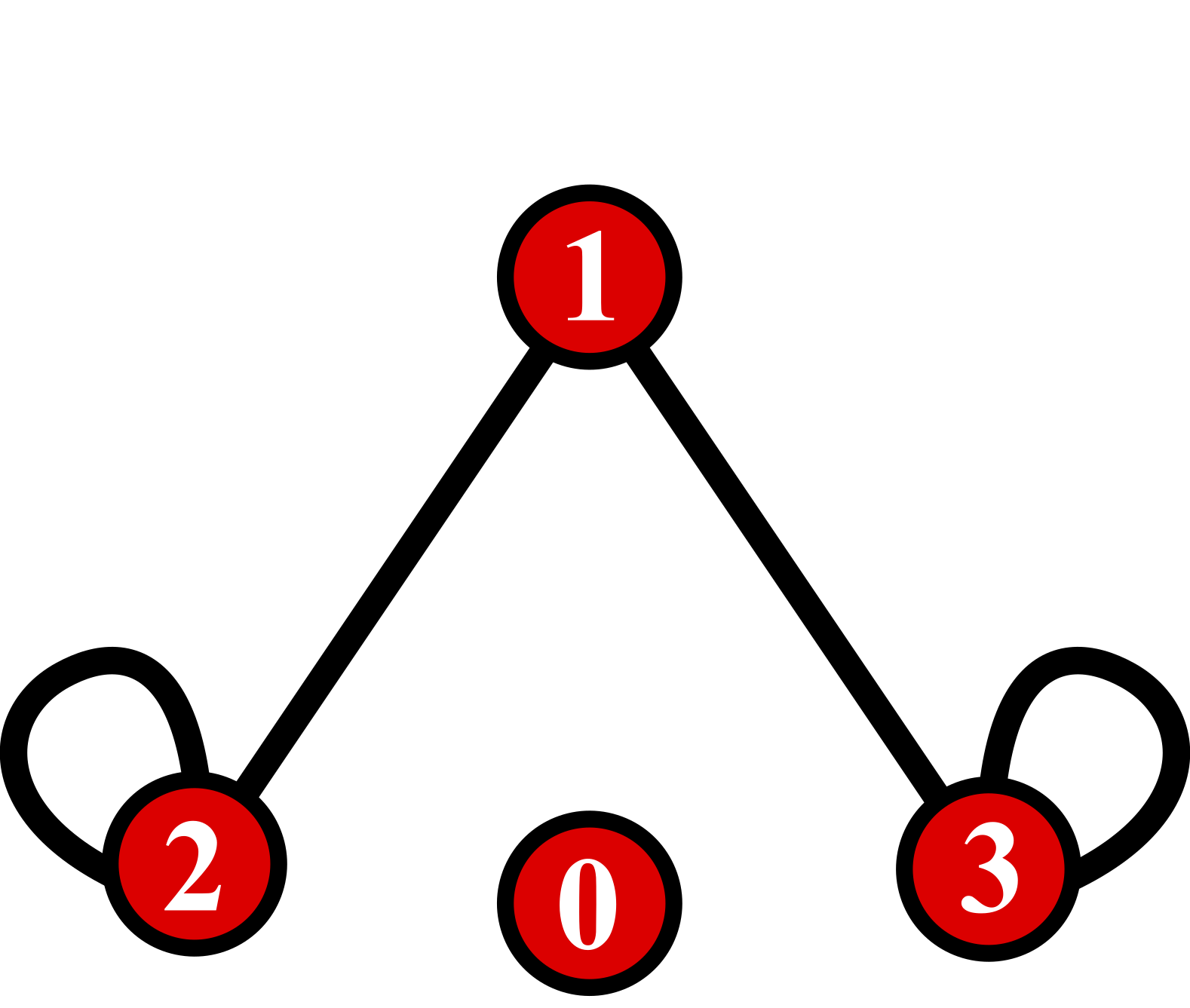} & { 
        \setlength{\arraycolsep}{4pt}
        \renewcommand{\arraystretch}{0.9}
        $\bA_1 = \begin{pmatrix}
        0 & 0 &0 &0\\ 
        0& 0& 1 & 1\\ 
        0 & 1 & 1 & 0\\ 
        0 & 1 & 0 & 1
        \end{pmatrix}$}&
        \includegraphics[height=4.5em]{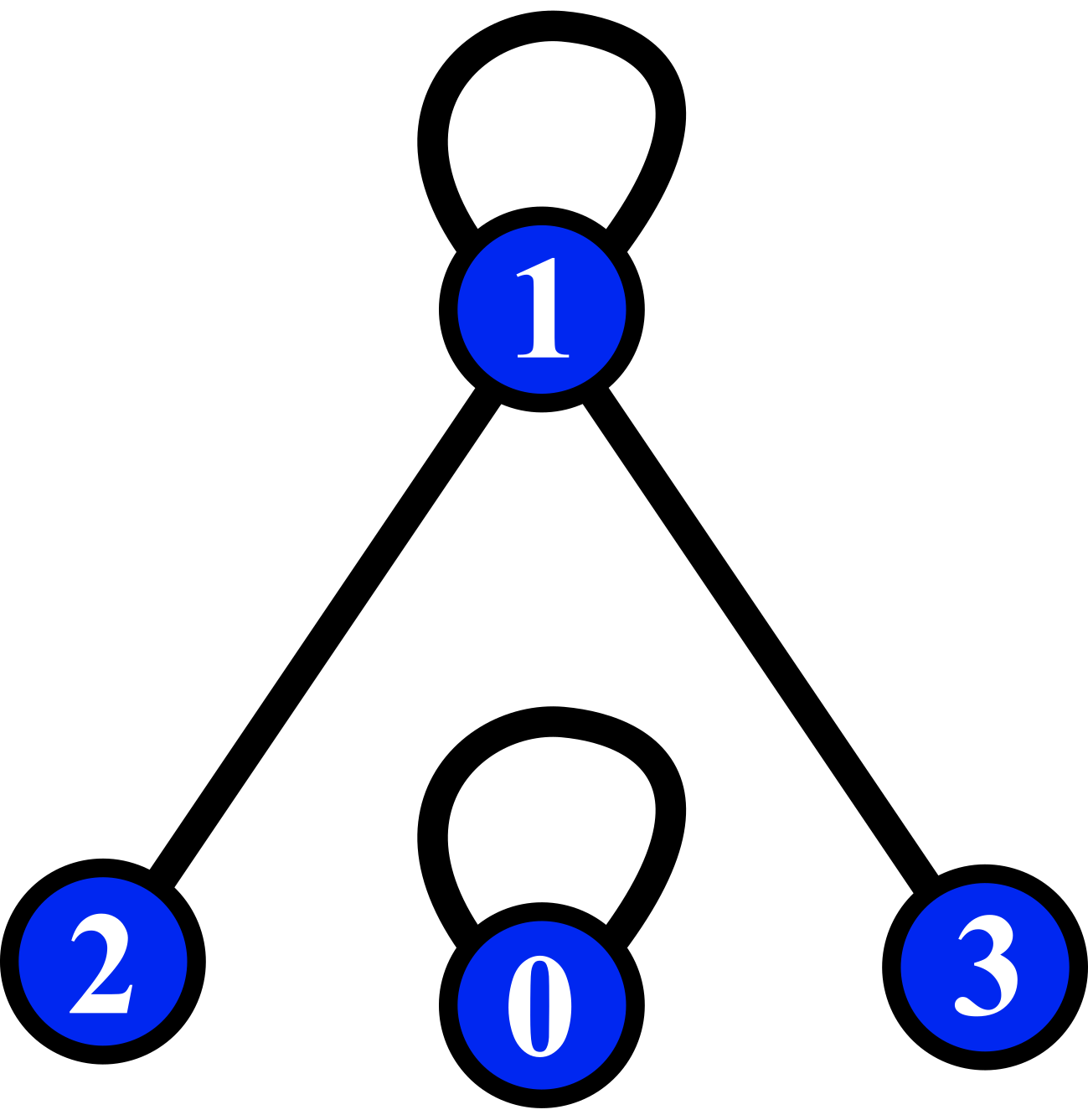} & { 
        \setlength{\arraycolsep}{4pt}
        \renewcommand{\arraystretch}{0.9}
        $\bA_2 = \begin{pmatrix}
        1 & 0 &0 &0\\ 
        0& 1& 1 & 1\\ 
        0 & 1 & 0 & 0\\ 
        0 & 1 & 0 & 0
        \end{pmatrix}$}
    \end{tabular}
    \caption{Two cospectral graphs and their adjacency matrices.}
    \label{fig:graphs_matrices}
\end{figure}

\pagebreak[4]
\begin{example}\label{ex: CospectralInteger}
    The theory and algorithms are particularly relevant for matrices with large entries or high dimensionality, since exhaustive enumeration of candidate cospectral mates then becomes infeasible.  

    In high dimensional settings, \emph{Haemers' conjecture} proposes that the probability of cospectrality of dense Erd\H{o}s--R\'enyi random graphs should tend to zero as the number of nodes grows large \cite{van2003graphs,haemers2016almost}.
    In fixed dimensionality, on the other hand, a random symmetric matrix with entries uniform from $\{1,\ldots,M \}$ is cospectral with probability bounded away from zero as $M\to \infty$. 
    (See, \eg \cite{vanwerde2026exact}.) 
    It is not necessarily easy to guess whether the following matrix will be cospectral: 
    {        \setlength{\arraycolsep}{4pt}
        \renewcommand{\arraystretch}{0.9}
    \begin{equation}
        \bX = 
        \begin{pmatrix}
            57& 21& 80& 57& 19\\
            21& 56& 14&  5& 78\\
           80& 14& 82& 71& 43\\
            57&  5& 71& 17& 46\\
           19& 78& 43& 46& 14
        \end{pmatrix}.
        \label{eq:DangerousNet}
    \end{equation}
    Our algorithm reveals that $\bX$ is cospectral over $\bbQ$. 
    We have $\bQ^{\T} \bX \bQ = \bY$ with 
    \begin{equation}
        \bY  = 
        \begin{pmatrix}
            33&   1& 115&  49&   6\\
            1&  -7 & 23&  12 & -9\\
            115&  23& 136&  14&  30\\
             49&  12&  14&  90&  29\\
             6&  -9&  30&  29& -26 
        \end{pmatrix}, \quad 
        \bQ = 
        \frac{1}{31}\begin{pmatrix}
             19& -20&  10&  -6&   8\\
             -6& -10&   5&  28&   4\\
         -4&  14&  24&  -2&  13\\
          8&   3&  14&   4& -26 \\
        22&  16&  -8&  11&   6
        \end{pmatrix}.
    \end{equation}
    Note that $\bQ e \neq e$ with $e$ the all-ones vector. 
    This cospectral pair is not generalized cospectral in the sense of \cite{wang2025haemers}.
    We refer to Section \ref{sec: Numerical} for statistics on cospectrality probabilities for random integer matrices of varying dimensionality.    
    } 
\end{example}

\subsubsection*{Outline}
We present the results in Section \ref{sec: Results}.
We give proofs in Sections \ref{sec: ProofsDedekind} and  \ref{sec: ProofsNumber}. 
The algorithm is given in Section \ref{sec: Algorithm}. 
We conclude with numerical experiments in Section~\ref{sec: Numerical}.

\section{Results}\label{sec: Results}

A key fact in our arguments is that the ring of integers in a number field is a Dedekind domain, so we present our results in this setting.
A relevant example to keep in mind is $\bbZ[\sqrt{10}] \subseteq \bbQ(\sqrt{10})$,  which is not a principal ideal domain.  
Other examples where the framework is applicable are the coordinate rings of non-singular curves such as $\bbF_p[x]$ and $\bbR[x,y]/(x^2 + y^2 -1)\bbR[x,y]$.

We introduce the framework in Section \ref{sec: Framework}, present our general results in Section \ref{sec: ResultsDedekind}, and specialize to algebraic number fields in Section \ref{sec: AlgebraicNumber}. 
\subsection{General framework}\label{sec: Framework}
All rings in this paper are commutative and unital. 
A Noetherian integral domain $R$ is a \emph{Dedekind domain} if every nonzero proper ideal $\cI \neq R$ admits a prime factorization, \ie there exist distinct nonzero prime ideals $\cP_1,\ldots,\cP_m \subseteq R$ and integers $e_1,\ldots,e_m \geq 1$ with 
    \begin{equation}
        \cI = \cP_1^{e_1} \cP_2^{e_2} \cdots \cP_m^{e_m}.\label{eq: L_PrimeFactorization} 
        \end{equation}   
    The prime ideals $\cP_i$ and integers $e_i$ in \eqref{eq: L_PrimeFactorization} are unique up to reordering.

    \pagebreak[4]
    Let $K$ be the fraction field of $R$.
    We fix a symmetric matrix $\bX  \in R^{n\times n}$ and assume that there exist $\bY \in R^{n\times n}$ and some nonzero $\bQ\in K^{n\times n}$ with  
    \begin{equation}
        \bX \bQ = \bQ \bY  \quad \textnormal{ and }\quad \bQ^{\T}\bQ \in R^{n\times n}.\label{eq:CalmEel}  
    \end{equation}
    In particular, $\bQ^{\T}\bQ\in R^{n\times n}$ holds if $\bQ$ is orthogonal in the strict sense $\bQ^{\T} \bQ = \bI$, in which case the first condition states that $\bQ^{\T}\bX\bQ = \bY$. 
    Some of our results however do not require the full strength of orthogonality. 
    We will in general hence only assume \eqref{eq:CalmEel} and explicitly state it when we impose stronger assumptions.

    We measure how far $\bQ$ is from being a matrix over $R$ by considering the denominator of its entries. 
    The \emph{level} of $\bQ$ is the ideal $\cL \subseteq R$ defined by 
    \begin{equation}
         \cL  \de \bigl\{r\in R: r\bQ \in R^{n\times n} \bigr\}.\label{eq: Def_L} 
    \end{equation}
    Note that $\cL = R$ if and only if $\bQ \in R^{n\times n}$. 
    
    If $R = \bbZ$, then $\bQ$ has entries in the rational numbers and $\cL \subseteq \bbZ$ is a principal ideal with generator given by the least common denominator of the entries of $\bQ$. 
    In this case, \eqref{eq: Def_L} recovers the definition of the level of a rational matrix used in previous sufficient conditions \cite{wang2016square,qiu2023generalized,wang2006sufficient,van2025Sufficient,wang2026graph}.   
    In such previous works, controlling the prime factorization over $\bbZ$ has been a key aim. 
    The natural analogy for Dedekind domains is the prime factorization of ideals \eqref{eq: L_PrimeFactorization}. 
    An ideal $\cA \subseteq R$ is said to be \emph{divisible} by an ideal $\cB\subseteq R$ if there exists an ideal $\cC \subseteq R$ with $\cB\cC =\cA$.
    In this case, we write $\cB \mid \cA$.
    We write $\cB \nmid \cA$ otherwise.

    Our initial goal will be to control the prime divisors of the level $\cL$. 
    If the level is nontrivial, then we further study the $R$-submodule of $R^n$ defined by 
    \begin{equation}
        \operatorname{Im}_R(\cL \bQ) \de \Bigl\{ \sum_{i=1}^m \ell_i \bQ v_i  : \ell_i \in \cL, \ v_i \in R^n\Bigr\}. \label{eq: Def_ImLQ}
    \end{equation}
    The study of this module will enable constraints on the potential columns of $\bQ$. 

    \subsection{Results over Dedekind domains}\label{sec: ResultsDedekind}
    \subsubsection{Sufficient condition}\label{sec: SufficientCondition}
    Recall that $\phi_\bX(x) = \det(x \bI - \bX)$ is the characteristic polynomial. 
    We define the \emph{discriminant} using the formal derivative $\phi_{\bX}'$ as 
\begin{equation}
    \Delta_{\bX} \de \det\bigl(\phi_{\bX}'(\bX)\bigr).\label{eq:VividRaven}
\end{equation}     
    Note that $\Delta_{\bX} \in R$, so we can consider the ideal $\Delta_{\bX}R\subseteq R$. 
    The following Theorem \ref{thm: MainPlevelP2disc} and Corollary \ref{cor: SufficientSqfree} generalize \cite{wang2016square} from $R =\bbZ$ to Dedekind domains:
    \begin{theorem}\label{thm: MainPlevelP2disc}
        If $\cP \mid \cL$ for some non-zero prime ideal $\cP \subseteq R$ then $\cP^2 \mid \Delta_{\bX}R$.  
    \end{theorem}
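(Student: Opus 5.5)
We would work locally at $\cP$ and argue by contradiction. Localize at $\cP$: the ring $R_{\cP}$ is a discrete valuation ring with uniformizer $\pi$ and residue field $k=R/\cP$, and all hypotheses in \eqref{eq:CalmEel} survive localization. The assumption $\cP\mid\cL$ means that $\bQ\notin R_{\cP}^{n\times n}$. So there is $m\geq 1$ such that $\bZ\de\pi^m\bQ\in R_{\cP}^{n\times n}$ while $\bZ\not\equiv 0 \pmod{\pi}$. This rescaled matrix satisfies $\bX\bZ=\bZ\bY$, and $\bZ^{\T}\bZ=\pi^{2m}\bQ^{\T}\bQ\equiv 0\pmod{\pi^2}$. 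Iterating gives $\bX^j\bZ=\bZ\bY^j$, hence
\begin{equation*}
\bZ^{\T} f(\bX)\bZ=\bZ^{\T}\bZ\, f(\bY)\equiv 0 \pmod{\pi^2}\quad\text{for every } f\in R_{\cP}[x].
\end{equation*}
Fix a column $z$ of $\bZ$ that is nonzero mod $\pi$. Then the Hankel (Krylov Gram) matrix $H=(z^{\T}\bX^{i+j}z)_{0\le i,j<n}$ has all entries divisible by $\pi^2$. Equivalently, the $R_{\cP}$-valued bilinear form $\lambda(f,g)=z^{\T}f(\bX)g(\bX)z$ on $R_{\cP}[x]/(\phi_{\bX})$ vanishes mod $\pi^2$.

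Suppose $\pi^2\nmid\Delta_{\bX}$. We use that $\Delta_{\bX}=\det\phi_{\bX}'(\bX)=\pm\operatorname{disc}(\phi_{\bX})$ (a resultant identity). There are two cases.

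\emph{Case $\pi\nmid\Delta_{\bX}$.} Here $\bar\phi_{\bX}$ is separable over $k$, so $k[\bar\bX]$ is étale. By Hensel's lemma (after an unramified extension of $R_{\cP}$, which is harmless) $\bX$ has eigenvectors $u_1,\dots,u_n$ over $R_{\cP}$ that are pairwise orthogonal. Since $\phi_{\bX}'(\bX)=\sum_i\phi'(\lambda_i)\,u_iu_i^{\T}/(u_i^{\T}u_i)$, taking determinants forces each $u_i^{\T}u_i$ to be a unit. Thus $k^n$ decomposes orthogonally into nondegenerate eigenlines of $\bar\bX$. The condition $\bar z^{\T}f(\bar\bX)\bar z=0$ for all $f$, applied with the idempotents projecting to the eigenlines, gives $(u_i^{\T}\bar z)^2/(u_i^{\T}u_i)=0$ for every $i$. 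Hence $\bar z=0$, which is a contradiction. This case already shows $\cP\mid\Delta_{\bX}R$.

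\emph{Case $v_\pi(\Delta_{\bX})=1$.} Then $\bar\phi_{\bX}$ has exactly one repeated root, and it is a double root. Hensel lifting the coprime factorization $\phi_{\bX}=g\,h$ with $\bar h$ separable and $\deg g=2$ splits $R_{\cP}^n$ into two $\bX$-invariant, mutually orthogonal pieces. The piece for $h$ is treated exactly as in the first case. This reduces everything to a rank-$2$ lattice $N=\ker g(\bX)$ on which $\bX$ acts with characteristic polynomial $g$, where $v_\pi(\operatorname{disc}g)\le 1$.

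Write $G$ for the Gram matrix of the dot product on $N$ in a basis $w,\bX w$. Then $\Delta$ restricted to this block equals $\operatorname{disc}(g)\cdot(\text{unit})$, up to the factor $\det G$ relating the module $R_{\cP}[\bX]w$ to $N$. The component $z_N$ of $z$ in $N$ is nonzero mod $\pi$, and the $2\times 2$ Hankel matrix of $z_N$ is $\equiv 0\pmod{\pi^2}$. Writing $z_N=aw+b\bX w$ and expanding, we aim to show that this forces $\pi^2\mid\operatorname{disc}(g)\det G$. This would give $\pi^2\mid\Delta_{\bX}$, contradicting the assumption.

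The main obstacle is this final $2\times 2$ computation. One has to track how $\det H_{z_N}=\det(\text{Krylov matrix of }z_N)^2\cdot\det G$ and the valuation of $\operatorname{disc}(g)$ interact when the Krylov matrix of $z_N$ is singular mod $\pi$. The first attempt would be a direct matrix calculation in a normal form for $\bX|_N$ modulo $\pi^2$.
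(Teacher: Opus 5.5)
Your set-up is sound. Passing to $R_{\cP}$, rescaling to $\pi^m\bQ$, and keeping a single column $z\not\equiv 0\bmod\pi$ with $z^{\T}f(\bX)z\equiv 0\bmod\pi^2$ loses nothing: for $e=1$, the paper's proof only uses that $\operatorname{Im}_R(\cL\bQ)$ is $\bX$-invariant, isotropic mod $\cP^2$ and nonzero mod $\cP$, and $\sK_{\bX}(z)$ has all three properties. Case~1 then correctly gives $\cP\mid\Delta_{\bX}R$.

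The gap is that everything beyond this, namely the step from $\cP$ to $\cP^2$, sits in the rank-$2$ computation you leave open. The target you set for it also does not work as stated. $\Delta_{\bX}$ carries no Gram factor: $v_\pi(\Delta_{\bX})=v_\pi(\operatorname{disc} g)$, because $\operatorname{disc} h$ and $\operatorname{Res}(g,h)$ are units. So proving $\pi^2\mid\operatorname{disc}(g)\det G$ contradicts nothing unless $\det G$ is a unit.

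The missing observation is that in Case~2 the block $N$ is necessarily cyclic. Put $B=\bX|_N-\tilde\alpha$, with $\tilde\alpha$ lifting the double root.
\begin{itemize}
\item If $B\equiv 0\bmod\pi$, then $\pi^2\mid\operatorname{disc}(g)$ outright.
\item Otherwise $\bar B$ is a nonzero nilpotent. For any $w$ with $\bar B\bar w\neq 0$, Nakayama shows that $w,Bw$ is a basis of $N$. The Gram determinant $\det G$ in this basis is a unit, because $N$ is an orthogonal direct summand of the unimodular lattice $\hat R_{\cP}^n$.
\end{itemize}

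With this, the computation closes in a few lines. In the basis $(w,Bw)$, $B$ is the companion matrix of $g(x+\tilde\alpha)=x^2+g_1x+g_0$, with $\pi\mid g_0$ and $\pi\mid g_1$. The condition $v_\pi(g_1^2-4g_0)=1$ then forces $v_\pi(g_0)=1$. Write $z_N=aw+bBw$. The Krylov matrix $C=[z_N,Bz_N]$ has $\det C=a^2-abg_1+b^2g_0$. Since $C^{\T}GC=(z_N^{\T}B^{i+j}z_N)_{i,j}\equiv 0\bmod\pi^2$ entrywise and $\det G$ is a unit, $v_\pi(\det C)\geq 2$. But if $a$ is a unit then $\det C$ is a unit. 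Otherwise $b$ is a unit and $v_\pi(\det C)=v_\pi(b^2g_0)=1$. Either way you get a contradiction.

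Smaller points to fix:
\begin{itemize}
\item The claim that $v_\pi(\Delta_{\bX})=1$ leaves exactly one repeated root of $\bar\phi_{\bX}$, double and rational over $R/\cP$, needs proof. For instance, after unramified base change a block with $\bar g=(x-\alpha)^m$, $m\geq 2$, contributes valuation at least $\max(1,m-1)$, and conjugate roots contribute equally.
\item Hensel's lemma requires the completion $\hat R_{\cP}$, not $R_{\cP}$.
\item In Case~1, the $u_i^{\T}u_i$ are units because $\prod_i u_i^{\T}u_i=\det(U)^2$ with $U$ invertible mod $\pi$. Taking $\det\phi_{\bX}'(\bX)$ does not show this.
\end{itemize}

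For comparison, the paper uses no localization and no case split on $v_\pi(\Delta_{\bX})$. It shows $\psi_0^2\mid\phi_{\bX}$ for the characteristic polynomial $\psi_0$ of $\bX$ on $\sQ_0$. Hence $\ker_{\cP}(\phi_{\bX}'(\bX))\supseteq\ker_{\cP}(\psi_0(\bX))$ either has dimension at least $\dim\sQ_0+1$, or equals $\sQ_0$. In the latter case, isotropy mod $\cP^2$ together with $\ker^{\perp}=\operatorname{Im}$ for symmetric matrices lifts its vectors to solutions mod $\cP^2$. Smith normal form then gives $\cP^2$ in both cases. Your determinant computation plays the role of that lifting step.
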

   \begin{corollary}\label{cor: SufficientSqfree}
       Suppose that $\Delta_{\bX} \neq 0$ and that in the ideal prime factorization $\Delta_{\bX}R = \prod_{i}\cP_i^{e_i}$ every exponent satisfies $e_i\leq 1$. 
       Then, $\cL = R$. 
   \end{corollary}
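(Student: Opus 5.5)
The statement to prove is Corollary \ref{cor: SufficientSqfree}, and the plan is a direct deduction from Theorem \ref{thm: MainPlevelP2disc} by contraposition.

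First I note that $\cL \neq 0$. Every entry of $\bQ$ lies in the fraction field $K$, so each entry has the form $a/b$ with $b \in R\setminus\{0\}$. The product of these finitely many denominators is a nonzero element of $\cL$.

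Now suppose, for contradiction, that $\cL \neq R$. Then $\cL$ is a nonzero proper ideal. By the prime factorization \eqref{eq: L_PrimeFactorization}, it has some nonzero prime divisor $\cP \mid \cL$. Theorem \ref{thm: MainPlevelP2disc} then gives $\cP^2 \mid \Delta_{\bX}R$.

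Since $\Delta_{\bX}\neq 0$, the ideal $\Delta_{\bX}R$ is nonzero. If $\Delta_{\bX}R = R$, then $\cP^2 \mid R$ would force $R \subseteq \cP^2 \subseteq \cP$, which is impossible. So $\Delta_{\bX}R$ is a proper ideal and has a factorization $\prod_i \cP_i^{e_i}$.

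Write $\Delta_{\bX}R = \cP^2\cC$ and factor $\cC$ into primes (taking $\cC = R$ if it is trivial). Combining this with $\cP^2$ gives a prime factorization of $\Delta_{\bX}R$ in which the exponent of $\cP$ is at least $2$. By uniqueness of the factorization, $\cP = \cP_i$ for some $i$ with $e_i \geq 2$. This contradicts the hypothesis that every $e_i \le 1$, so $\cL = R$.

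There is no real obstacle here. The only care needed is the bookkeeping about divisibility. In a Dedekind domain, divisibility of ideals coincides with comparing exponents in the unique factorization; this follows from uniqueness, because multiplying factorizations adds exponents. All the substantive work lies in Theorem \ref{thm: MainPlevelP2disc}, which we are allowed to assume.
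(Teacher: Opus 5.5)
Your proposal is correct and matches the paper's intended argument. The paper presents this corollary as an immediate contrapositive of Theorem \ref{thm: MainPlevelP2disc} and writes out no details. Your bookkeeping supplies exactly the omitted steps: $\cL \neq 0$ by clearing denominators, a proper nonzero ideal has a prime divisor, and $\cP^2 \mid \Delta_{\bX}R$ forces an exponent $\geq 2$ by unique factorization.
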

    It is conjectured that the discriminant is square-free over $\bbZ$ with non-vanishing frequency for random symmetric integer matrices, such as those arising as the adjacency matrices of random graphs with loops; see Lvov and Van Werde \cite[Conjecture 1.7]{lvov2026satisfaction}.
    Let us note however that $\Delta_{\bX}$ being a square-free integer does not necessarily imply that $\Delta_{\bX}R$ is a square-free ideal when $R$ is the ring of integers of a number field; see Section \ref{sec: AlgebraicNumber}.

    \begin{remark}\label{rem: DiscPoly}
        In general, the \emph{discriminant of a monic polynomial} $f\in R[x]$ can be defined using roots of $f$ in the algebraic closure $\lambda_1,\ldots,\lambda_n\in \overline{K}$ as $\Delta_f \de  \prod_{i<j}  (\lambda_i -\lambda_j)^2$.  
        A direct calculation using Jordan decomposition verifies that we then have $\Delta_{\bX} =  \pm \Delta_{\phi_{\bX}}$. 
        This alternative definition may be useful when comparing to related literature, but we shall see that  \eqref{eq:VividRaven} is the natural definition for our proofs.  
        I thank Nikita Lvov for bringing  \eqref{eq:VividRaven} as well as other formulations to my attention in our recent collaboration \cite[Remark 3.11]{lvov2026satisfaction}.
    \end{remark}
    \begin{remark}\label{rem: SimpleDisc}
        If $\bX$ is the adjacency matrix of a simple graph on $n$ nodes, then it always holds that $4 \mid \Delta_{\bX}$ \cite{wang2016square,wang2024rational}.
        Thus, different from graphs with loops, the discriminant is never square-free for simple graphs. 
        The subsequent results are still applicable to simple graphs, although the algorithm's efficiency will be reduced when the dimensionality is large. 
    \end{remark}
    \subsubsection{Constraints on the matrix $\bQ$}\label{sec: ConstraintsQ}
    We next constrain the possibilities for $\bQ$ and $\cL$ when $\Delta_{\bX}R$ fails to be square-free. 
    We first set up some notation.
      
    Nonzero prime ideals in a Dedekind domain are maximal, so the quotient $\bbF_{\cP} \de R/\cP$ is a field. 
    Consider the factorization of $\phi_{\bX} \bmod \cP$ into powers of irreducible monic polynomials $\gamma_1,\ldots,\gamma_m \in \bbF_{\cP}[x]$: 
    \begin{equation}
        \textstyle\phi_{\bX}  \equiv \prod_{i=1}^m \gamma_{i}^{d_i}  \bmod \cP,  
    \end{equation}
    with $d_1,\ldots,d_m \geq 1$.  
    Define an (approximate) square-root $\Gamma \in \bbF_{\cP}[x]$ by  
    \begin{equation}
        \textstyle \Gamma \de \prod_{i=1}^m \gamma_i^{\lfloor d_i / 2\rfloor}, \label{eq: Def_Gamma} 
    \end{equation}
    where $\lfloor \cdot \rfloor$ is the floor function.
    
    \begin{definition}\label{def: Contractive}
        Fix some $e\geq 1$. 
        A $R$-submodule $\sM \subseteq R^n$ is \emph{$\Gamma$-contractive up to level $\cP^e$} if for every $k\leq e-1$ and $q\in  \sM \cap (\cP^k )^n$ it holds that 
        \begin{equation}
            \Gamma(\bX)q \equiv 0 \bmod \cP^{k+1}. 
        \end{equation}
    \end{definition}
    \begin{definition}\label{def: Iso}
        Fix some $e\geq 1$. 
        A $R$-submodule $\sM \subseteq R^n$ is said to be \emph{isotropic up to level $\cP^e$} if $q_1^{\T} q_2\equiv 0 \bmod \cP^{e}$ for every $q_1,q_2 \in \sM$.
    \end{definition}  
    Isotropy has recently also played a role in sufficient conditions for generalized cospectrality by Guo and Wang \cite{guo2025primary}.
    \begin{definition}\label{def: Krylov}
        The \emph{Krylov space} of a set of vectors $w_1,\ldots,w_m\in R^n$ is given by 
    \begin{equation}
        \sK_{\bX}(w_1,\ldots,w_m) \de  \Bigl\{\sum_{j=1}^m\sum_{i=0}^{n-1} c_{i,j}\bX^i w_j : c_{i,j} \in  R \Bigr\}. \label{eq:TediousElf} 
    \end{equation}
    \end{definition}
    The Krylov space is sometimes also referred to as \emph{control space} \cite{godsil2012controllable,o2016conjecture} and relates to the \emph{walk matrix} considered in some sufficient conditions for generalized spectral characterization \cite{wang2013generalized,van2025Sufficient,wang2026graph} through the image of that matrix. 
    
    It would be equivalent to let the powers $i$ run up to integers greater than $n-1$ in \eqref{eq:TediousElf}. 
    Indeed, it follows from the Cayley--Hamilton theorem that $\bX^n$ can be expressed as a $R$-linear combination of the $\bX^i$ with $i<n$. 
    Thus, the Krylov subspace can also be interpreted as the $R[x]$-module span of the vectors $w_1,\ldots,w_m$ when one equips $R^n$ with the action of $\bX$. 
    The latter perspective has also played a role in recent probabilistic results  \cite{van2025cokernel,lvov2026satisfaction}.

    The contraposition of the following Theorem \ref{thm: LevelBound} can be used to prove upper bounds on the greatest power of $\cP$ that divides the level $\cL$. 
    \begin{theorem}\label{thm: LevelBound}
        Assume that $\cP^e \mid \cL$ for some non-zero prime ideal $\cP \subseteq R$ and $e\geq 1$. 
        Then, there exists $w\in R^n$ with $w\not\equiv 0 \bmod \cP$ for which the Krylov space $\sK_{\bX}(w)$ is $\Gamma$-contractive up to level $\cP^e$ and isotropic up to level $\cP^{2e}$. 
    \end{theorem}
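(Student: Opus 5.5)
The plan is to work locally at $\cP$ and to manufacture $w$ directly from $\bQ$. Since $\bQ\neq 0$, I would produce $w$ as a scaled column combination $w = c\,\bQ u$, choosing $c\in R$ and $u\in R^n$ so that $w$ is integral, $w\not\equiv 0 \bmod \cP$, and $c$ is divisible by $\cP$ to order at least $e$. Both the isotropy and the contractivity claims should then follow from the intertwining relation $\bX\bQ=\bQ\bY$ together with $\bQ^{\T}\bQ\in R^{n\times n}$.

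\emph{Construction of $w$.} Pass to the localization $R_{\cP}$, which is a discrete valuation ring; let $\pi$ be a uniformizer and $v_{\cP}$ the valuation. Let $f$ be the largest integer with $\pi^{f}\bQ\in R_{\cP}^{n\times n}$ minimal. Concretely, $-f$ is the minimum of $v_{\cP}$ over the entries of $\bQ$. The hypothesis $\cP^e\mid\cL$ means $\cL\subseteq\cP^e$, and this forces $f\geq e$. Indeed, if $f<e$, then by the Chinese remainder theorem I could pick $r\in R$ with $v_{\cP}(r)=f$ that clears the finitely many other prime denominators. Then $r\in\cL\setminus\cP^{e}$, a contradiction. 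Now choose a standard basis vector $u$ such that $\pi^{f}\bQ u\not\equiv 0 \bmod \cP$. By the same CRT argument, choose $c\in R$ with $v_{\cP}(c)=f$ and $c\bQ\in R^{n\times n}$. Set $w\de c\bQ u$. This $w$ lies in $R^n$, and $w\not\equiv 0\bmod\cP$ because $c/\pi^{f}$ is a unit in $R_{\cP}$.

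\emph{Isotropy.} From $\bX\bQ=\bQ\bY$ we get $\bX^i w=c\bQ\bY^i u$. Hence every element of $\sK_{\bX}(w)$ has the form $c\bQ z$ with $z\in R^n$. For two such elements,
\begin{equation*}
(c\bQ z_1)^{\T}(c\bQ z_2)=c^2\, z_1^{\T}(\bQ^{\T}\bQ) z_2\in c^2R\subseteq\cP^{2f}\subseteq\cP^{2e},
\end{equation*}
so $\sK_{\bX}(w)$ is isotropic up to level $\cP^{2e}$.

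\emph{Contractivity.} This is the step I expect to be the real content. Fix $k\leq e-1$ and set $\sM=\sK_{\bX}(w)$. Define
\begin{equation*}
U_k\de\{\pi^{-k}q \bmod \cP : q\in\sM\cap(\cP^k)^n\}\subseteq\bbF_{\cP}^n,
\end{equation*}
where the division is done in $R_{\cP}$ and the reduction uses $R_{\cP}/\pi R_{\cP}\cong\bbF_{\cP}$. Then:
\begin{itemize}
\item $U_k$ is an $\bbF_{\cP}$-subspace.
\item $U_k$ is $\bX$-invariant, because $\sM$ is $\bX$-invariant.
\item $U_k$ is totally isotropic, because $q_1^{\T}q_2\in\cP^{2e}$ and $2e-2k\geq 2$.
\end{itemize}
The goal reduces to showing $\Gamma(\bX)U_k=0$, which gives $\Gamma(\bX)q\equiv0\bmod\cP^{k+1}$.

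\emph{The linear-algebra lemma over $\bbF_{\cP}$.} Let $U\subseteq U^{\perp}$ be $\bX$-invariant, with $\bX$ symmetric. Then $U^{\perp}$ is also $\bX$-invariant, and we have the flag $0\subseteq U\subseteq U^{\perp}\subseteq\bbF_{\cP}^n$. The nondegenerate standard pairing identifies $\bbF_{\cP}^n/U^{\perp}$ with the dual of $U$, with $\bX$ acting by the transpose. Therefore
\begin{equation*}
\phi_{\bX}\equiv\chi_U^2\,\chi_{U^{\perp}/U} \pmod{\cP},
\end{equation*}
where $\chi_U$ and $\chi_{U^{\perp}/U}$ denote the characteristic polynomials of $\bX$ on $U$ and on $U^{\perp}/U$. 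Comparing with $\prod_i\gamma_i^{d_i}$, the exponent of each $\gamma_i$ in $\chi_U$ is at most $\lfloor d_i/2\rfloor$, so $\chi_U\mid\Gamma$. Cayley--Hamilton on $U$ then gives $\Gamma(\bX)|_U=0$.

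The main things to handle carefully are bookkeeping issues:
\begin{itemize}
\item justifying that the CRT choices land in $R$ rather than only in $R_{\cP}$;
\item checking that working in $R_{\cP}$ is harmless for congruences modulo $\cP^{k+1}$, which holds because $R/\cP^{m}\cong R_{\cP}/\cP^{m}R_{\cP}$.
\end{itemize}
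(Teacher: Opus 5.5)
Your proof is correct, and its skeleton matches the paper's. Your $w=c\bQ u$ is simply an element of $\operatorname{Im}_R(\cL\bQ)$ that is nonzero modulo $\cP$; your valuation/CRT argument plays the role of Lemma~\ref{lem: PrimeFactorization}. Isotropy follows from $\bQ^{\T}\bQ\in R^{n\times n}$ exactly as in Lemma~\ref{lem: IsotropyImQ}. Your $U_k$ is the analogue of the paper's $\sQ_k$ for the smaller module $\sK_{\bX}(w)$.

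Two things differ. First, the paper proves the stronger Theorem~\ref{thm: ColumnRestriction} for the whole image module and then passes to a single generator, whereas you work with one vector from the start. Second, and more substantively, the two arguments reach the divisibility $\psi_k^2\mid\phi_{\bX}$ (your $\chi_U^2\mid\phi_{\bX}$) differently. The paper uses a primary-component dimension count (Lemmas~\ref{lem: Psi} and~\ref{lem: Sq}): it shows $\dim_{\cP}(\sV_g)\geq 2\dim_{\cP}(\sQ_k\cap\sV_g)$ via independence of $\sQ_k\cap\sV_g$ and $\operatorname{Im}_{\cP}(g(\bX)^j)$ inside $(\sQ_k\cap\sV_g)^{\perp}$. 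You instead read it off in one step from the invariant flag $U\subseteq U^{\perp}\subseteq\bbF_{\cP}^n$ and the $\bX$-equivariant identification $\bbF_{\cP}^n/U^{\perp}\cong U^{*}$, which gives $\phi_{\bX}\equiv\chi_U^2\,\chi_{U^{\perp}/U}$.

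Your version is shorter and avoids primary decomposition. It also makes visible that $\Gamma$-contractivity up to level $\cP^e$ is a formal consequence of $\bX$-invariance together with isotropy up to level $\cP^{2e-1}$. The paper's organization pays off elsewhere: the same subspaces $\sQ_k$ are reused in Lemma~\ref{lem: LiftSolutionSquared} and Corollary~\ref{cor: HigherP} to prove Theorem~\ref{thm: MainPlevelP2disc}.

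One small slip: when showing $f\geq e$, if $f<0$ there is no $r\in R$ with $v_{\cP}(r)=f$, so take $v_{\cP}(r)=\max\{f,0\}$ instead. Also, $f$ should be defined as the \emph{smallest} integer with $\pi^{f}\bQ\in R_{\cP}^{n\times n}$.
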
    
    It can occur that the bound resulting from contraposition of Theorem \ref{thm: LevelBound} degenerates to infinity if a vector $w$ with the claimed properties exists for every $e\geq 1$. 
    For instance, this is (necessarily) so if $\bX = \bI$. 
    We however prove in Lemma \ref{lem: KrylovIsotropyBound} that the bound is finite when $R$ is the ring of integers of a number field and $\bX\in \bbZ^{n\times n}$ satisfies $\Delta_{\bX} \neq 0$. 
    That is, if all eigenvalues of $\bX$ are distinct.

    We can also restrict the columns of $\bQ$.
    Recall the definition of $\operatorname{Im}_R(\cL \bQ)$ from \eqref{eq: Def_ImLQ}. 
    This module corresponds to a contractive and isotropic Krylov space:
    \begin{theorem}\label{thm: ColumnRestriction}
        Let $e\geq 1$ be an integer such that $\cP^{e}\mid \cL$. 
        Then, there exist $m\geq 1$ vectors $w_1,\ldots,w_m\in R^n$, not all zero modulo $\cP$, whose Krylov space $\sK_{\bX}(w_1,\ldots,w_m)$ is $\Gamma$-contractive up to level $\cP^e$ and isotropic up to level $\cP^{2e}$ and such that 
        \begin{equation}
            \operatorname{Im}_R(\cL \bQ)  = \sK_{\bX}(w_1,\ldots,w_m).\label{eq: Im_Kw1wm}
        \end{equation}
    \end{theorem}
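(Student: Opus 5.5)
Write $\sM \de \operatorname{Im}_R(\cL\bQ)$. By the definition \eqref{eq: Def_L} of the level we have $\sM\subseteq R^n$. The plan is to first show that $\sM$ is a finitely generated $R[x]$-module, which gives the Krylov description \eqref{eq: Im_Kw1wm}. Nonvanishing modulo $\cP$ and isotropy then follow from the definitions, and contractivity comes from a linear-algebra fact about isotropic invariant subspaces over $\bbF_\cP$.

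\emph{Krylov description.} From $\bX\bQ=\bQ\bY$ with $\bY\in R^{n\times n}$ we get $\bX(\ell\bQ v)=\ell\bQ(\bY v)$, so $\sM$ is $\bX$-invariant. Since $R$ is Noetherian, $\sM$ is a finitely generated submodule of $R^n$. Choosing $R$-module generators $w_1,\dots,w_m$ of $\sM$ then gives $\sM=\sK_\bX(w_1,\dots,w_m)$, using $\bX$-invariance and Cayley--Hamilton.

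\emph{Not all $w_i$ vanish modulo $\cP$.} Suppose $\sM\subseteq (\cP)^n$. Then every entry of $\ell\bQ$ lies in $\cP$ for all $\ell\in\cL$. Hence the fractional ideal $\cP^{-1}\cL$ satisfies $(\cP^{-1}\cL)\bQ\in R^{n\times n}$, so $\cP^{-1}\cL\subseteq \cL$. Cancelling the invertible ideal $\cL$ gives $\cP^{-1}\subseteq R$, which is impossible.

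\emph{Isotropy up to $\cP^{2e}$.} For $q_i=\ell_i\bQ v_i$ we compute
\begin{equation*}
q_1^\T q_2=\ell_1\ell_2\, v_1^\T(\bQ^\T\bQ)v_2\in\cL^2\subseteq\cP^{2e}.
\end{equation*}
Here we use that $\bQ^\T\bQ\in R^{n\times n}$. Bilinearity extends this to all of $\sM$.

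\emph{Contractivity; this is the main step.} I would localize at $\cP$, so that $R_\cP$ is a DVR with uniformizer $\pi$ and $\cL R_\cP=\pi^fR_\cP$ for some $f\geq e$. Let $q\in\sM\cap(\cP^k)^n$ with $k\leq e-1$. In $R_\cP^n$ we can write $q=\pi^f\bQ v$ with $v\in R_\cP^n$. Set $u\de\pi^{-k}q=\pi^{f-k}\bQ v$, which is integral, and note $f-k\geq 1$. For all $i,j\geq 0$,
\begin{equation*}
(\bX^iu)^\T(\bX^ju)=\pi^{2(f-k)}\,v^\T\bQ^\T\bQ\,\bY^{i+j}v\equiv 0\bmod \cP.
\end{equation*}
Thus $W\de\bbF_\cP[\bX]\bar u\subseteq\bbF_\cP^n$ is an $\bX$-invariant isotropic subspace, so $W\subseteq W^\perp$.

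Because $\bX$ is symmetric, $W^\perp$ is also $\bX$-invariant. Moreover, $\bbF_\cP^n/W^\perp\cong W^*$ with $\bX$ acting by the transpose, which has the same characteristic polynomial as $\bX|_W$. Factoring along the flag $W\subseteq W^\perp\subseteq\bbF_\cP^n$ gives
\begin{equation*}
\phi_\bX\equiv \operatorname{char}(\bX|_W)^2\cdot\operatorname{char}(\bX|_{W^\perp/W})\bmod\cP .
\end{equation*}
The minimal polynomial $\mu$ of $\bar u$ equals $\operatorname{char}(\bX|_W)$, because $W$ is cyclic. Hence $\mu^2\mid\phi_\bX\bmod\cP$. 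Comparing exponents of each irreducible $\gamma_i$, this gives $\mu\mid\Gamma$ by \eqref{eq: Def_Gamma}.

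Therefore $\Gamma(\bX)\bar u=0$, that is, $\Gamma(\bX)u\equiv0\bmod\cP$. Multiplying by $\pi^k$ gives $\Gamma(\bX)q\equiv0\bmod\cP^{k+1}$ locally at $\cP$. Since $\cP^{k+1}R_\cP\cap R=\cP^{k+1}$, the congruence holds in $R^n$.

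The step I expect to need the most care is this last one, specifically the legitimacy of writing $q=\pi^f\bQ v$ after localization and the pull-back from $R_\cP$ to $R$. The algebraic heart of the argument is the squared-characteristic-polynomial identity for isotropic invariant subspaces.
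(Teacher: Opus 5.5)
Your proof is correct. Its overall skeleton matches the paper's: generators of $\operatorname{Im}_R(\cL\bQ)$ plus $\bX$-invariance give the Krylov identity, integrality of $\bQ^{\T}\bQ$ gives isotropy, and your nonvanishing argument is essentially Lemma \ref{lem: PrimeFactorization}. The real difference is in the contractivity step.

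\textbf{How the paper does contractivity.} It works with whole subspaces rather than single vectors. It builds the filtration $\sQ_0\subseteq\sQ_1\subseteq\cdots$ of reductions of $\{q : \cP^k q\subseteq \operatorname{Im}_R(\cL\bQ)+(\cP^{k+1})^n\}$. It proves that $\sQ_k$ is isotropic and $\bX$-invariant for $k\le e-1$. It then shows $\psi_k^2\mid\phi_{\bX}$ by a dimension count on primary subspaces, namely $\dim\sV_g\ge 2\dim(\sQ_k\cap\sV_g)$ (Lemmas \ref{lem: Psi} and \ref{lem: Sq}). Finally, a given $v$ is written as $rw$ modulo $\cP^{k+1}$, with $r$ a generator of $\cP^k/\cP^{k+1}$ in the principal ideal ring $R/\cP^{k+1}$ and $w \bmod \cP\in\sQ_k$.

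\textbf{How you do it.} You localize to get a uniformizer and take only the cyclic subspace $W$ generated by $\bar u$, which lies inside $\sQ_k$. You check its isotropy directly from $\bX^{i+j}\bQ=\bQ\bY^{i+j}$. You then get $\chi_W^2\mid\phi_{\bX}$ from the flag $W\subseteq W^\perp\subseteq\bbF_{\cP}^n$ and the equivariant duality $\bbF_{\cP}^n/W^\perp\cong W^*$.

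\textbf{What each route buys.} Your flag argument is shorter. It yields the sharper identity $\phi_{\bX}\equiv\chi_W^2\,\chi_{W^\perp/W}$, and it would apply verbatim to $\sQ_k$ itself. The paper's route costs more here, but the filtration $\sQ_k$ and its dimensions are exactly what it reuses later for Theorem \ref{thm: MainPlevelP2disc}, via Lemma \ref{lem: LiftSolutionSquared} and Corollary \ref{cor: HigherP}.

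\textbf{The points you flagged.} Both steps you worried about are fine. Since $\ell_i\in\cL R_{\cP}=\pi^fR_{\cP}$, writing $q=\pi^f\bQ v$ is legitimate. The pull-back works because $\cP^{k+1}R_{\cP}\cap R=\cP^{k+1}$, as powers of a maximal ideal are primary.

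\textbf{One small omission.} To conclude $\cP^{-1}\cL\subseteq\cL$ you also need $\cP^{-1}\cL\subseteq R$. This holds because $\cL\subseteq\cP$.
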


    \subsection{Algebraic number fields}\label{sec: AlgebraicNumber}

    Recall that an \emph{algebraic number field} is a field $K$ of characteristic zero that is finite-dimensional as a $\bbQ$-vector space. 
    Further, recall that the \emph{ring of integers} $\sO_K$ consists of the roots of monic integer polynomials: 
    \begin{equation}
        \sO_K \de \Bigl\{k\in K: \exists m\geq 1,\ \exists c_i\in \bbZ,\  k^m + c_{m-1}k^{m-1} + \cdots + c_0 = 0   \Bigr\}.
    \end{equation}
    The ring of integers of a number field is a Dedekind domain \cite[C1, \S3]{neukirch1999algebraic}. 
    We next apply the results of the preceding Section \ref{sec: ResultsDedekind} with $R=\sO_K$. 
    There, $K$ denoted the fraction field of the Dedekind domain $R$. 
    This is consistent with the current notation since the field $K$ is indeed the fraction field of its ring of integers \cite[C1, \S2]{neukirch1999algebraic}.

    \subsubsection{Sufficient condition}
    Consider an integer symmetric matrix $\bX \in \bbZ^{n\times n}$. 
    Then, the discriminant is also an integer $\Delta_{\bX}\in \bbZ$. 
    If the discriminant is further nonzero, then it admits a unique factorization: 
    \begin{equation}
        \textstyle \Delta_{\bX} = \pm p_1^{h_1}p_2^{h_2}\cdots p_m^{h_m},  \label{eq:IvoryBeetle}  
    \end{equation}
    for integers $h_i \geq 1$ and distinct prime numbers $p_i\in \bbZ_{\geq 2}$. 
    Hence, as ideals,  
    \begin{equation}
         \textstyle \Delta_{\bX}\sO_K = \bigl(p_1\sO_K\bigr)^{h_1}\bigl(p_2\sO_K\bigr)^{h_2} \cdots \bigl(p_m\sO_K\bigr)^{h_m}. \label{eq:GhostlyTin}
    \end{equation}
    It would however be premature to conclude that Corollary \ref{cor: SufficientSqfree} may be used if $h_i \leq 1$ for every $i$. 
    Indeed, the ideals $p_i\sO_K$ are not necessarily prime.
    Moreover, the ideals are not even necessarily square-free: 
    \begin{definition}
        A prime number $p\in \bbZ_{\geq 2}$ is said to be \emph{ramified} in the algebraic number field $K$ if there exists a nonzero prime ideal $\cP \subseteq \sO_K$ with $\cP^2 \mid p\sO_K$. 
    \end{definition}
    Ramified primes always exist if $K\neq \bbQ$, but there are only finitely many \cite[p.207]{neukirch1999algebraic}.
    For nonzero prime $\cP \subseteq \sO_K$, the ideal $\cP \cap \bbZ \subseteq \bbZ$ is again prime and nonzero \cite[C3, p44-45]{marcus1977number}.
    If $p$ is the prime with $\cP\cap \bbZ =  p\bbZ$ then $\cP$ is said to \emph{lie above} $p$.   
    
    \pagebreak[3]
    Consider a symmetric integer matrix $\bX\in \bbZ^{n\times n}$ and suppose that we are given some matrix $\bQ\in K^{n\times n}$ which satisfies \eqref{eq:CalmEel} over $R = \sO_K$. 
    That is, $\bQ^{\T}\bQ \in \sO_K^{n\times n}$ and there exists $\bY \in \sO_{K}^{n\times n}$ with $\bX\bQ = \bQ \bY$.  
    Recall the definition of $\cL$ from \eqref{eq: Def_L}. 
    The following results then follow from Theorem \ref{thm: MainPlevelP2disc} and Corollary \ref{cor: SufficientSqfree}: 
    \begin{corollary}\label{cor: Ramified1}
        If $\cP \mid \cL$ for a non-zero prime ideal $\cP\subseteq \sO_K$, then $\cP$ lies above a prime $p\in \bbZ_{\geq 2}$ with $p\mid \Delta_{\bX}$. 
        Moreover, if $p^2 \nmid \Delta_{\bX}$ then the prime $p$ is ramified in $K$ and it holds that $\cP^2 \mid p\sO_K$.  
    \end{corollary}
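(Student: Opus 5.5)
The plan is to apply Theorem \ref{thm: MainPlevelP2disc} with $R=\sO_K$ and then translate the conclusion from ideals of $\sO_K$ back to integers, using the factorization \eqref{eq:GhostlyTin}. Assume $\cP\mid\cL$ for a nonzero prime $\cP\subseteq\sO_K$. Theorem \ref{thm: MainPlevelP2disc} gives $\cP^2\mid\Delta_{\bX}\sO_K$. In particular $\cP\mid\Delta_{\bX}\sO_K$; since divisibility means containment in a Dedekind domain, this says $\Delta_{\bX}\in\cP$. Also $\Delta_{\bX}\in\bbZ$, so $\Delta_{\bX}\in\cP\cap\bbZ=p\bbZ$, where $p$ is the prime that $\cP$ lies above, and hence $p\mid\Delta_{\bX}$.

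For this step to give a genuine divisibility statement we need $\Delta_{\bX}\neq 0$. I would handle this either by noting that the corollary is implicitly stated under the standing assumption that the discriminant is nonzero, or by observing that if $\Delta_{\bX}=0$ then every $p$ trivially divides it. The second claim presupposes $p^2\nmid\Delta_{\bX}$, which already forces $\Delta_{\bX}\neq0$, so it causes no difficulty there.

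For the second claim, suppose $p^2\nmid\Delta_{\bX}$, so the exponent of $p$ in \eqref{eq:IvoryBeetle} is $h=1$. Write \eqref{eq:GhostlyTin} as $\Delta_{\bX}\sO_K=(p\sO_K)\cdot\prod_{p_i\neq p}(p_i\sO_K)^{h_i}$. I would show that $\cP$ does not divide any $p_i\sO_K$ with $p_i\neq p$. If it did, then $p_i\in\cP\cap\bbZ=p\bbZ$, which is impossible for a distinct prime $p_i$. By uniqueness of prime ideal factorization, the exponent of $\cP$ in $\Delta_{\bX}\sO_K$ therefore equals its exponent in $p\sO_K$. Since that exponent is at least $2$, we get $\cP^2\mid p\sO_K$, and so $p$ is ramified in $K$ by definition.

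No step here looks like a real obstacle; the argument is a bookkeeping exercise with ideal exponents. The one point needing care is the coprimality of $\cP$ with $p_i\sO_K$ for $p_i\neq p$, which follows from $\cP\cap\bbZ=p\bbZ$.
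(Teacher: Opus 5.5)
Your proposal is correct and follows essentially the same route as the paper: you apply Theorem \ref{thm: MainPlevelP2disc} with $R=\sO_K$ to get $\cP^2\mid\Delta_{\bX}\sO_K$, and then use that $\cP$ divides $p_i\sO_K$ only for the unique prime $p$ below $\cP$, so the $\cP$-exponent comes entirely from $(p\sO_K)^{h}$, which forces either $h\geq 2$ or $\cP^2\mid p\sO_K$. Your caveat about $\Delta_{\bX}=0$ in the first claim is unnecessary, since $\Delta_{\bX}\in\cP\cap\bbZ=p\bbZ$ already gives $p\mid\Delta_{\bX}$ when $\Delta_{\bX}=0$.
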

    \begin{corollary}\label{cor: Ramified2}
        Assume that $\Delta_{\bX}\neq 0$ and that none of the primes in the factorization \eqref{eq:IvoryBeetle} is ramified in $K$. 
        Further, assume that all $h_i$ in \eqref{eq:IvoryBeetle} satisfy $h_i\leq 1$. 
        Then, $\cL = \sO_K$.
    \end{corollary}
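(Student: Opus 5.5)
Corollary \ref{cor: Ramified2} follows by contradiction from Corollary \ref{cor: Ramified1}, or equivalently from Theorem \ref{thm: MainPlevelP2disc} together with the ideal factorization \eqref{eq:GhostlyTin}. Suppose $\cL \neq \sO_K$. Since $\sO_K$ is a Dedekind domain and $\cL$ is a nonzero proper ideal, $\cL$ has a prime divisor $\cP$. (The ideal $\cL$ is nonzero because $\bQ$ has finitely many entries in $K$, so a common denominator exists.)

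By Theorem \ref{thm: MainPlevelP2disc} applied with $R=\sO_K$, we get $\cP^2 \mid \Delta_{\bX}\sO_K$. Let $p$ be the prime lying below $\cP$. Then $\cP \mid \Delta_{\bX}\sO_K$ forces $\Delta_{\bX} \in \cP \cap \bbZ = p\bbZ$, so $p \mid \Delta_{\bX}$ and $p = p_i$ for some $i$ in \eqref{eq:IvoryBeetle}. This is exactly the first part of Corollary \ref{cor: Ramified1}.

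Next, use \eqref{eq:GhostlyTin} with all $h_j \le 1$, so that $\Delta_{\bX}\sO_K = \prod_j p_j\sO_K$. For $j \neq i$, the prime $\cP$ does not divide $p_j\sO_K$: otherwise $p_j \in \cP \cap \bbZ = p_i\bbZ$, which is impossible for distinct primes. By uniqueness of prime ideal factorization, the exponent of $\cP$ in $\Delta_{\bX}\sO_K$ therefore equals its exponent in $p_i\sO_K$. Since $\cP^2 \mid \Delta_{\bX}\sO_K$, this exponent is at least $2$, so $\cP^2 \mid p_i\sO_K$. Hence $p_i$ is ramified in $K$, contradicting the hypothesis. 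So $\cL = \sO_K$.

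The only delicate point is the bookkeeping of exponents. One must check that $\cP$ appears in the factorization of $p_j\sO_K$ only for $j=i$, which is what lets us transfer $\cP^2 \mid \Delta_{\bX}\sO_K$ to $\cP^2 \mid p_i\sO_K$. This rests on $\cP\cap\bbZ = p_i\bbZ$ and on unique factorization of ideals, both stated earlier in the paper. Everything else is a direct invocation of Theorem \ref{thm: MainPlevelP2disc}.
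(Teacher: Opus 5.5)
Your proof is correct and follows essentially the paper's route: the paper deduces Corollary~\ref{cor: Ramified2} directly from Corollary~\ref{cor: Ramified1}, and the proof of the latter is exactly your combination of Theorem~\ref{thm: MainPlevelP2disc} with the observation that $\cP$ can divide $p_j\sO_K$ only for the unique prime $p_j$ lying below it. Your explicit exponent bookkeeping spells out what the paper compresses into a brief appeal to the multiplication rules for ideals.
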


    The foregoing results all concerned the general setting of \eqref{eq:CalmEel}. 
    Our main motivation however comes from matrices that are orthogonal in the strict sense that $\bQ^{\T}\bQ = \bI$. 
    Under an additional assumption on the number field, this allows us to deduce more restrictive constraints on the matrix $\bQ$. 

    The number field $K$ is said to be \emph{totally real} if every field embedding $\sigma: K \to \bbC$ satisfies $\sigma(K) \subseteq \bbR$.
    In particular, this is the case for the splitting field of a polynomial $f\in \bbQ[x]$ if all roots of $f$ are real. 
    \begin{proposition}\label{prop: TotallyRealSigned}
        Consider a totally real number field $K$. 
        Then, for any fixed ideal $\cL\subseteq \sO_K$, there are only finitely many matrices $\bQ \in K^{n\times n}$ with $\bQ^{\T} \bQ = \bI$ and level ideal $\cL$. 
        Moreover, the only such matrices with $\cL = \sO_K$ are signed permutations. 
    \end{proposition}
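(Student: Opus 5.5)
The plan is to bound the entries of $\bQ$ under every real embedding, and then turn the level condition into integrality. Let $\sigma_1,\ldots,\sigma_d\colon K\to\bbR$ be the $d=[K:\bbQ]$ field embeddings; they are all real because $K$ is totally real. Applying $\sigma_j$ entrywise to $\bQ^{\T}\bQ=\bI$ shows that $\sigma_j(\bQ)$ is a real orthogonal matrix. Its columns are unit vectors, so every entry $q$ of $\bQ$ satisfies $|\sigma_j(q)|\le 1$ for all $j$.

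For the finiteness claim, fix $\cL$. Pick a nonzero $r\in\cL$; such an $r$ exists since $\cL\neq 0$, because $\bQ$ has entries in $K$ and therefore a common denominator. By the definition of the level, $r\bQ\in\sO_K^{n\times n}$, so every entry $x=rq$ of $r\bQ$ lies in $\sO_K$. Its conjugates satisfy $|\sigma_j(x)|\le |\sigma_j(r)|$, and this bound depends only on $r$, hence only on $\cL$.

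The key step is the standard fact that $\sO_K$ contains only finitely many elements whose conjugates are all bounded by fixed constants. Under the Minkowski embedding $\sO_K\hookrightarrow\bbR^d$, $x\mapsto(\sigma_1(x),\ldots,\sigma_d(x))$, the ring $\sO_K$ is a lattice, and a bounded box meets a lattice in finitely many points. Equivalently, such an $x$ has characteristic polynomial $\prod_j(t-\sigma_j(x))\in\bbZ[t]$ with bounded integer coefficients, so there are finitely many such polynomials and hence finitely many roots. This gives finitely many choices for each entry of $r\bQ$, hence finitely many $\bQ$. This step is the only real input, and it is classical.

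For the second claim, suppose $\cL=\sO_K$, so $\bQ\in\sO_K^{n\times n}$. Fix a column $(q_1,\ldots,q_n)^{\T}$ of $\bQ$. Applying each embedding to the column-norm identity gives
\begin{equation*}
    \sum_i \sigma_j(q_i)^2=1 \quad\text{for every } j.
\end{equation*}
Summing over $j$ and using $\operatorname{Tr}_{K/\bbQ}(q_i^2)=\sum_j\sigma_j(q_i)^2$ yields
\begin{equation*}
    \sum_i \operatorname{Tr}_{K/\bbQ}(q_i^2)=d.
\end{equation*}
For a nonzero totally real algebraic integer $q$, the AM--GM inequality gives
\begin{equation*}
    \operatorname{Tr}(q^2)\;\ge\; d\,|N(q)|^{2/d}\;\ge\; d,
\end{equation*}
because $N(q)$ is a nonzero integer. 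Hence exactly one entry of the column is nonzero, and for that entry $q_i$ equality holds in AM--GM. So all the $\sigma_j(q_i)^2$ are equal, and since they sum to $d$ (the other entries vanish), $\sigma_j(q_i)^2=1$ for every $j$. As $\sigma_j$ is injective, this gives $q_i=\pm1$.

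Every column of $\bQ$ therefore has a single nonzero entry equal to $\pm1$. Because $\bQ$ is invertible, no two columns can have their nonzero entry in the same row. Hence $\bQ$ is a signed permutation.
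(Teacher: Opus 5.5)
Your proof is correct and follows essentially the paper's route. Finiteness comes from the same Minkowski-lattice (bounded box) argument applied to $r\bQ$. For the integral case, both proofs rest on the tension between $|N(q)|\ge 1$ for nonzero algebraic integers and the conjugate bounds forced by orthonormality; you package this through AM--GM on $\operatorname{Tr}_{K/\bbQ}(q^2)$ summed over a column, whereas the paper uses an entrywise norm bound. One small gain of your version is that you get $q_i=\pm1$ directly from injectivity of a real embedding, which shows the paper's appeal to Kronecker's theorem is not actually needed.
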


    Combining Corollary \ref{cor: Ramified2} and Proposition \ref{prop: TotallyRealSigned} over some totally real algebraic number field $K$ gives a sufficient condition to rule out nontrivial cospectral mates over $K$ in the sense of Definition \ref{def: CospectralOverK}. 

    \begin{example}\label{example: Q_sqrt_d}
        Consider a positive square-free integer $d\geq 2$. 
        Then $K = \bbQ(\sqrt{d})$ is totally real. 
        The ramified primes are the prime divisors of $d$, and also $p=2$ if $d\not\equiv 1 \bmod 4$ (see, \eg, \cite[C1, p.15 
        ]{neukirch1999algebraic}, \cite[C3, p.199, Theorem 2.6]{neukirch1999algebraic}): 
        \begin{equation}
            \sP_d  = 
            \begin{cases}
             \{\textnormal{prime }p : p\mid d \} & \textnormal{ if }d\equiv 1 \bmod 4, 
             \\
             \{2 \}\cup \{\textnormal{prime }p : p\mid d \} & \textnormal{ if }d\not\equiv 1 \bmod 4.
            \end{cases}
        \end{equation} 
    Corollary \ref{cor: Ramified2} and Proposition \ref{prop: TotallyRealSigned} now yield a sufficient condition applicable to symmetric matrices $\bX \in \bbZ^{n\times n}$ with square-free discriminant $\Delta_{\bX}$ whose prime divisors $p \mid \Delta_{\bX}$ satisfy $p\not\in \sP_d$.
        When applicable, we may conclude that the only orthogonal matrices $\bQ\in K^{n\times n}$ with $\bQ^{\T}\bX \bQ \in \sO_{K}^{n\times n}$ are signed permutations.
    \end{example}
     
    One notable example of a totally real number field is the field found by adjoining a root of the characteristic polynomial $\phi_{\bX}$.   
    Let us note however that the assumptions of Corollary \ref{cor: Ramified2} are essentially never applicable in that setting. 
    In particular, if $\Delta_{\bX}$ is square-free, then every prime divisor $p\mid \Delta_{\bX}$ ramifies in the latter field if $\phi_{\bX}$ is irreducible \cite[p.79, Theorem 34]{marcus1977number} (see also the proof of Lemma \ref{lem: IrreducibleDegree1}), and the assumption becomes $\Delta_{\bX} = \pm 1$. 
    Matrices with discriminant $\pm 1$ are exceptional:

    \begin{proposition}\label{prop: Discpm1}
        A symmetric matrix $\bX\in \bbZ^{n\times n}$ with $n>1$ satisfies $\lvert \Delta_{\bX} \rvert =  1$ if and only if $n=2$ and it is a diagonal matrix of the following form for some $a\in \bbZ$: 
        \begin{equation}
            \bX = \begin{pmatrix}
                a & 0 \\ 
                0 & a \pm 1
            \end{pmatrix}.
        \end{equation}
    \end{proposition}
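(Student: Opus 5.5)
The plan is to use that $\Delta_{\bX}=\pm\prod_{i<j}(\lambda_i-\lambda_j)^2$, by Remark \ref{rem: DiscPoly}, where the eigenvalues $\lambda_1,\ldots,\lambda_n$ are real algebraic integers. The easy direction comes first. For $\bX=\operatorname{diag}(a,a\pm1)$ we have $\phi_{\bX}=(x-a)(x-a\mp1)$, so $\Delta_{\bX}=(\lambda_1-\lambda_2)^2=1$.

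For the converse, assume $\lvert\Delta_{\bX}\rvert=1$. Then $\Delta_{\bX}\neq0$, so the eigenvalues are distinct. I would group them by their minimal polynomials over $\bbQ$: factor $\phi_{\bX}=f_1\cdots f_r$ into distinct monic irreducibles in $\bbZ[x]$. The standard multiplicativity gives
\begin{equation}
\Delta_{\phi_{\bX}}=\prod_i\Delta_{f_i}\prod_{i<j}\operatorname{Res}(f_i,f_j)^2 .
\end{equation}
Every factor here is a nonzero integer, so each must be $\pm1$.

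The key step uses Minkowski's theorem. For an irreducible $f_i$ of degree $d\ge2$, a root $\theta$ generates the order $\bbZ[\theta]\subseteq\sO_{\bbQ(\theta)}$. Its discriminant is $\Delta_{f_i}=[\sO:\bbZ[\theta]]^2\,d_{\bbQ(\theta)}$, which forces $\lvert d_{\bbQ(\theta)}\rvert=1$. Minkowski says no number field other than $\bbQ$ has discriminant $\pm1$, so every $f_i$ is linear. Hence all eigenvalues are integers. (Alternatively, one can avoid Minkowski with a more elementary route, since totally real roots allow the Schur/Siegel-type estimates below; but citing Minkowski is cleanest.)

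Once all eigenvalues are distinct integers, $\prod_{i<j}\lvert\lambda_i-\lambda_j\rvert=1$ forces every pairwise difference to be $\pm1$. Three distinct integers cannot have all pairwise differences equal to $\pm1$, so $n\le2$. Combined with $n>1$, this gives $n=2$ with eigenvalues $a$ and $a+1$, after swapping the sign labelling.

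The remaining step is to show $\bX$ is diagonal. Write $\bX=\begin{pmatrix}p&q\\q&s\end{pmatrix}$ with integer entries. The discriminant is $(p-s)^2+4q^2=1$. Since $q$ is an integer, $4q^2\le1$ forces $q=0$, and then $p-s=\pm1$. This gives exactly the stated form, and in fact this direct computation alone settles the $n=2$ case.

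The main obstacle is excluding $n\ge3$. That exclusion relies on Minkowski's discriminant bound, or an equivalent argument, to rule out nonrational eigenvalues. Without it, one would need a direct bound such as the Schur-type inequality that a monic totally real polynomial of degree $d\ge 2$ has $\lvert\Delta\rvert>1$. I would cite Minkowski's theorem for this.
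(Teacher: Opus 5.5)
Your proposal is correct and follows essentially the same route as the paper. Both arguments use resultant multiplicativity of the discriminant together with $\Delta_{f}=\pm[\sO_K:\bbZ[\theta]]^2\operatorname{disc}(\sO_K)$ and Minkowski's bound to force all eigenvalues to be integers, then pairwise differences $\pm1$ to force $n=2$, and finally $(a-c)^2+4b^2=1$ to force the diagonal form. One cosmetic nit: with the paper's definition $\Delta_{\bX}=\det(\phi_{\bX}'(\bX))$ one actually gets $\Delta_{\bX}=-1$ for $\operatorname{diag}(a,a\pm1)$. This is harmless, since the statement only concerns $\lvert\Delta_{\bX}\rvert$.
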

    A proof is given in Appendix \ref{sec: MatrixDiscpm1}. 
    That the sufficient condition fails when the field is chosen depending on the matrix is not surprising. 
    Indeed, if one considers the field $K$ that arises from the splitting field after the tower of quadratic extensions necessary to normalize the eigenvectors into the unit vectors, then $\bX$ can be related through an orthogonal matrix with entries in $K$ to the diagonal matrix of eigenvalues, which has entries in $\sO_K$.

    \subsubsection{Algorithmic search for cospectral mates}\label{sec: AlgorithmicSearch}
    If  Corollary \ref{cor: Ramified2} is not applicable in some given setting, then it may be possible to use our other results to constrain the potential orthogonal matrices to a finite set. 
    This can be used to find all cospectral mates over $K$ of a given matrix $\bX \in \bbZ^{n\times n}$ with $\Delta_{\bX} \neq 0$. 
    We here describe when the resulting algorithm is reasonably efficient.
    
    Let us note from the start that it will be important that the field $K$ is not too large. 
    This is because we have to solve problems in computational algebraic number theory that are otherwise prohibitively expensive.  
    
    \begin{assumption}\label{ass: SmallNumberField}
        The number field $K$ is totally real and is such that we can practically run algorithms over $\sO_K$. 
        (\eg computing the list of ramified primes, determining the prime ideals $\cP \subseteq \sO_K$ with $\cP^2 \mid p\sO_K$ for some given $p\in \mathbb{Z}$,...)  
    \end{assumption}
    
    Even if the number field is small, like $K=\bbQ$, we will have to determine all primes $p$ that satisfy $p^2 \mid \Delta_{\bX}$. 
    To our knowledge, there is no way to do this that is significantly more efficient than factorizing the discriminant. 
    Prime factorization is tractable for some moderate-sized matrices, such as those in Example \ref{ex: CospectralInteger}, but prohibitively expensive for large matrices.
    
    Probabilistic heuristics however suggest the relevant primes should typically all be small. 
    Indeed, modeling $\Delta_{\bX}$ as a random integer gives a probability of order $1/p^2$ for divisibility by $p^2$, which is summable. 
    (See also \cite[Conjecture 1.7]{lvov2026satisfaction} for refined predictions.)
    We assume knowledge of an a-priori bound:
    \begin{assumption}\label{ass: SmallPrimes}
        We are given an integer $M\geq 1$ such that every prime number $p$ with $p^2\mid \Delta_{\bX}$ satisfies that $p\leq M$.   
    \end{assumption}
    In practice, if a rigorous upper bound is unknown, then one can simply substitute a large value of $M$ for a heuristic algorithm.  
    We can then not guarantee the completeness of the set of cospectral mates, only that we find all orthogonal matrices whose level only contains ideals lying above primes $p\leq M$.

    The efficiency of the algorithm also depends on the approximate square root $\Gamma$ defined in \eqref{eq: Def_Gamma}:  
    \begin{assumption}\label{ass: DegGamma}
        It holds for every prime ideal $\cP \subseteq \sO_K$ with $\cP^2 \mid \Delta_{\bX}\sO_K$ that the polynomial $\Gamma\in \bbF_{\cP}[x]$ has low degree.    
    \end{assumption}
    Different from Assumption \ref{ass: SmallPrimes}, the degree of $\Gamma$ does not have to be known from the start.  
  
    \pagebreak[3]
    Subject to the foregoing assumptions, Algorithm \ref{alg:orth_matrices} in Section \ref{sec: Algorithm} enables finding all cospectral mates of $\bX$ over $K$. 
    This is done essentially by bounding the level and next searching through the finitely many orthogonal matrices that remain possible, although implementing this idea requires developing additional theory. 
    At any rate, we have the following guarantee: 
      
    \begin{theorem}\label{thm: Algorithm}
        Consider an integer symmetric matrix $\bX\in \bbZ^{n\times n}$ with $\Delta_{\bX} \neq 0$ and a totally real number field $K$.
        Then,  Algorithm \ref{alg:orth_matrices} terminates in finite time and outputs all matrices $\bQ \in K^{n\times n}$ with $\bQ^{\T}\bQ = \bI$ and $\bQ^{\T} \bX \bQ \in \sO_K^{n\times n}$.   
    \end{theorem}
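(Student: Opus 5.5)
Since Algorithm \ref{alg:orth_matrices} is only described later, the plan is to prove Theorem \ref{thm: Algorithm} by separating \emph{soundness/completeness} (every output is valid and every valid $\bQ$ is output) from \emph{termination}. Both parts rest on reducing the infinite search to a finite one in three stages.

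First, fix $\bQ\in K^{n\times n}$ with $\bQ^{\T}\bQ=\bI$ and $\bY\de\bQ^{\T}\bX\bQ\in\sO_K^{n\times n}$. Then \eqref{eq:CalmEel} holds over $R=\sO_K$, so Corollary \ref{cor: Ramified1} applies. Every prime $\cP\mid\cL$ lies above some $p\mid\Delta_{\bX}$, and either $p^2\mid\Delta_{\bX}$ or $\cP^2\mid p\sO_K$. In both cases $\cP^2\mid\Delta_{\bX}\sO_K$, so $\cP$ belongs to an explicit finite list computable from $\Delta_{\bX}$ and $K$.

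Second, for each such $\cP$ the exponent of $\cP$ in $\cL$ must be bounded. By Theorem \ref{thm: LevelBound}, if $\cP^e\mid\cL$ then there is a $w\not\equiv 0\bmod\cP$ with $\sK_{\bX}(w)$ isotropic up to level $\cP^{2e}$ and $\Gamma$-contractive up to level $\cP^e$. I would invoke Lemma \ref{lem: KrylovIsotropyBound}, which says that for $\Delta_{\bX}\neq0$ no $w\not\equiv0\bmod\cP$ has this property for all $e$. This should give a computable $e_{\max}(\cP)$. The algorithm presumably finds it by increasing $e$ until no admissible $w$ modulo $\cP^{2e}$ remains, which is a finite check at each $e$ because $R/\cP^{2e}$ is finite. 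I need this lemma to give genuine finiteness, and also that the algorithm's stopping rule matches it. The candidate levels are then the finitely many ideals $\cL=\prod\cP^{e_\cP}$ with $e_\cP\le e_{\max}(\cP)$.

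Third, for each candidate $\cL$, Proposition \ref{prop: TotallyRealSigned} (using that $K$ is totally real) gives finitely many orthogonal $\bQ$ of level $\cL$. For $\cL=\sO_K$ these are exactly the signed permutations. To make this effective I would write $\bQ=\bQ'/\alpha$ for a fixed nonzero $\alpha\in\cL^{-1}$-compatible denominator, with $\bQ'\in\sO_K^{n\times n}$ whose columns $q$ satisfy $\sum_i\sigma(q_i)^2=\sigma(\alpha)^2$ for every embedding $\sigma$. Since $K$ is totally real, this bounds all conjugates of the entries, leaving finitely many lattice points.

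The column constraint of Theorem \ref{thm: ColumnRestriction} restricts these columns to $\operatorname{Im}(\cL\bQ)=\sK_{\bX}(w_1,\dots,w_m)$. That theorem is what the algorithm uses for efficiency, but for correctness only finiteness plus an exhaustive check matter. The algorithm tests $\bQ^{\T}\bQ=\bI$ and $\bQ^{\T}\bX\bQ\in\sO_K^{n\times n}$ directly on each candidate, so nothing invalid is output. Every valid $\bQ$ survives all pruning steps, because each pruning step is justified by one of the results above.

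The main obstacle is the second stage: showing the level bound is finite and effectively reached. I would first try to derive it from Lemma \ref{lem: KrylovIsotropyBound}. The idea is that if $\sK_{\bX}(w)$ is isotropic to level $\cP^{2e}$ for large $e$, then passing to the $\cP$-adic completion gives a nonzero isotropic $\bX$-invariant subspace. For distinct eigenvalues such a subspace is a sum of eigenspaces, each spanned by an eigenvector $v$ with $v^{\T}v\neq0$, and orthogonal eigenspaces cannot combine into an isotropic space, a contradiction. A compactness argument then turns this into a uniform bound.
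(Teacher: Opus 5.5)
Your proposal is correct, and its skeleton matches the paper's. The prime ideals come from Theorem \ref{thm: MainPlevelP2disc} (your use of Corollary \ref{cor: Ramified1} is equivalent). The exponents are bounded by Theorem \ref{thm: LevelBound} together with a loop over $e$ that stops once $\bbW_{\cP}(e)=\emptyset$. The final search is finite by total reality and the discreteness of $\sO_K$ in the Minkowski embedding. Soundness comes from the explicit final checks.

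The real difference is in the uniform bound that makes the loop stop. Lemma \ref{lem: KrylovIsotropyBound} is already stated uniformly: a single $e$ excludes every $w\not\equiv 0 \bmod \cP$. Your compactness argument is therefore an independent proof of that lemma, not an upgrade of a pointwise statement.

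The paper's proof is algebraic. Lemma \ref{lem: TrivialSolution} uses a real orthonormal eigenbasis and a Vandermonde determinant to show that $w^{\T}\bX^k w=0$ for all $k\leq n-1$ forces $w=0$ over $\bbC$. The Nullstellensatz and descent to $\bbQ$ then give identities $n_i w_i^{h_i}=\sum_k s_k^{(i)}(w)\,w^{\T}\bX^k w$ over $\bbZ$ with $n_i\neq 0$. Any $e$ with $n_i\notin\cP^e$ for all $i$ then works (Lemma \ref{lem: IsotropyXk}).

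You instead use compactness of $\widehat{\sO}_{K,\cP}^{\,n}$ to extract a limit $w_\infty\not\equiv 0\bmod \cP$ with $w_\infty^{\T}\bX^k w_\infty=0$ for all $k$. You then exclude the resulting nonzero isotropic $\bX$-invariant subspace. This works, but $\overline{K_{\cP}}$ has no positivity, so you need to justify $v^{\T}v\neq 0$ for eigenvectors. Eigenvectors of the symmetric $\bX$ for distinct eigenvalues are pairwise orthogonal. Hence the Gram matrix of an eigenbasis $\bV$ is diagonal with determinant $\det(\bV)^2\neq 0$, so every diagonal entry is nonzero.

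What each route buys:
\begin{itemize}
\item Yours is more conceptual, avoids the Nullstellensatz, and works verbatim over any field in which $\phi_{\bX}$ has distinct roots.
\item The paper's produces one identity over $\bbZ$ serving all primes at once, with explicit dependence $e=1+\max_i v_{\cP}(n_i)$. In particular $e=1$ whenever $\cP$ divides no $n_i$.
\item Compactness only gives existence of some $e$ for each fixed $\cP$, but that is all termination requires.
\end{itemize}

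One small slip: the denominator $\alpha$ in your third stage should simply be a nonzero element of $\cL$.
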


    This guarantee does not require the foregoing assumptions, but they are nonetheless crucial for the interpretation. 
    In particular, the subroutines involved are reasonably tractable only if the assumptions are applicable.

    For comparison, it could also be shown by more elementary means that there are only finitely many symmetric matrices $\bY \in \bbZ^{n\times n}$ that are cospectral to $\bX$. 
    Indeed, since the trace of a matrix is the sum of the eigenvalues,
    \begin{equation}
        \sum_{i,j=1}^n\bX_{i,j}^2 = \operatorname{Tr}(\bX^2) = \operatorname{Tr}(\bY^2) = \sum_{i,j=1}^n\bY_{i,j}^2.\label{eq:RoyalSun}
    \end{equation}
    In principle, this also enables finding all such matrices $\bY$ by brute force in finite time.
    The usability of such an exhaustive method is however limited due to the number of candidate matrices exploding as a function of $\sum_{i,j}\bX_{i,j}^2$.

  Even if one restricts $\bX$ and $\bY$ to have entries in $\{0,1 \}$, the number of candidate matrices to compare with in \eqref{eq:RoyalSun} grows exponentially in $n^2$.     
  It took a non-trivial effort when Brouwer and Spence determined the fraction of cospectral graphs on $n\leq 12$ nodes through exhaustive enumeration \cite{brouwer2009cospectral}, extending earlier work by Godsil and McKay \cite{godsil1976some} for $n\leq 9$ and by Haemers and Spence for $n \leq 11$ \cite{haemers2004enumeration}. 
  
  Algorithm \ref{alg:orth_matrices} can be applied to to integer matrices that simultaneously have large entries and high dimensionality. 
  In fact, due to phenomena related to Remark \ref{rem: SimpleDisc}, such applications with general symmetric integer matrices will typically run somewhat faster than applications to simple graphs. 
  Numerical data is given in Section \ref{sec: Numerical}. 
    \begin{remark}
        One can not guarantee that there are only finitely many orthogonal matrices $\bQ \in K^{n\times n}$ with $\bQ^{\T}\bX \bQ \in \sO_K^{n\times n}$ if $\Delta_{\bX} = 0$.
        For example, consider $\bX = \bI$. 
        In fact, infinitely many orthogonal matrices with  $\bQ^{\T}\bX \bQ \in \sO_K^{n\times n}$ appears to be the typical behavior when $\Delta_{\bX} = 0$, at least when $K$ is sufficiently large, since one can act by rotation on an eigenspace with repeated eigenvalue.
        Thus, the assumption in Theorem \ref{thm: Algorithm} is necessary to guarantee a finite output.

        On the other hand, these examples arise from the fact that there are infinitely many orthogonal matrices with $\bX = \bQ^{\T}\bX \bQ$ and do not actually give rise to an infinitude of cospectral mates. 
        (Indeed, there can only be finitely many cospectral mates due to \eqref{eq:RoyalSun}.) 
        Theory and algorithms for matrices with repeated eigenvalues remain an interesting question for future work.  
    \end{remark}
    \section{Matrices over a Dedekind domain --- Proofs for \texorpdfstring{Section \ref{sec: ResultsDedekind}}{Section}}\label{sec: ProofsDedekind} 
    Throughout this section, we adopt the general framework from Section \ref{sec: Framework}. 
    Thus, let $\bX,\bY\in R^{n\times n}$ be matrices over a Dedekind domain with $\bX$ symmetric, and  consider a matrix $\bQ \in K^{n\times n}$ over the fraction field such that \eqref{eq:CalmEel} is satisfied. 

    We set up some general preliminaries in Section \ref{sec: Preliminaries} that are repeatedly used in the subsequent arguments. 
    The arguments for Theorems \ref{thm: MainPlevelP2disc}, \ref{thm: LevelBound}, and \ref{thm: ColumnRestriction}  are subsequently given in Sections \ref{sec: Proof_DivLevelDivDelta} to \ref{sec: ProofMainP2disc}.   
    \subsection{Preliminaries}\label{sec: Preliminaries}
    \subsubsection{Structure of the image module}\label{sec: StructureImage}
    Recall the definitions of $\cL$ and $\operatorname{Im}_R(\cL \bQ)$ from \eqref{eq: Def_L} and \eqref{eq: Def_ImLQ}, respectively.   
    The following two results show how the assumption \eqref{eq:CalmEel} is reflected in the structure of $\operatorname{Im}_R(\cL \bQ)$.  
    \begin{lemma}\label{lem: Restrict}
        The matrix $\bX$ restricts to a $R$-linear map on $\Im_{R}(\cL\bQ)$,   
        \begin{equation}
            \bX\mid_{\Im_{R}(\cL \bQ)}:\Im_{R}(\cL \bQ)  \to \Im_{R}(\cL\bQ): v \mapsto \bX v.\label{eq:GoodKing}  
        \end{equation}   
    \end{lemma}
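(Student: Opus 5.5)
The plan is to check directly that $\bX$ maps each generator of $\Im_R(\cL\bQ)$ back into the module, using only the intertwining relation $\bX\bQ = \bQ\bY$ from \eqref{eq:CalmEel} and the fact that $\bY$ has entries in $R$. Since $\bX\in R^{n\times n}$ is $R$-linear on $R^n$, it suffices to show that $\bX$ sends $\Im_R(\cL\bQ)$ into itself. Linearity of $\bX$ then gives $R$-linearity of the restricted map.

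First, I check that $\Im_R(\cL\bQ)$ is indeed a submodule of $R^n$. For $\ell\in\cL$ and $v\in R^n$, the matrix $\ell\bQ$ has entries in $R$ by the definition \eqref{eq: Def_L} of the level. Hence $\ell\bQ v\in R^n$, and finite $R$-combinations of such vectors stay in $R^n$. The set \eqref{eq: Def_ImLQ} is closed under addition by construction. It is closed under multiplication by $r\in R$ because $r\,\ell_i\bQ v_i = \ell_i\bQ(rv_i)$ with $rv_i\in R^n$.

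Next, take a generator $\ell\bQ v$ with $\ell\in\cL$ and $v\in R^n$. Then
\begin{equation}
    \bX(\ell\bQ v) = \ell\,\bX\bQ v = \ell\,\bQ\bY v = \ell\,\bQ(\bY v).
\end{equation}
Because $\bY\in R^{n\times n}$ and $v\in R^n$, we have $\bY v\in R^n$. So $\bX(\ell\bQ v)$ is again of the form $\ell\bQ v'$ with $v'\in R^n$, and therefore lies in $\Im_R(\cL\bQ)$. By linearity of $\bX$, the same holds for every finite sum $\sum_i \ell_i\bQ v_i$, which establishes \eqref{eq:GoodKing}.

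There is no real obstacle here. The one point to keep straight is that only the relation $\bX\bQ=\bQ\bY$ and integrality of $\bY$ are used. Neither the symmetry of $\bX$ nor the condition $\bQ^{\T}\bQ\in R^{n\times n}$ is needed for this lemma.
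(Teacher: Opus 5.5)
Your proof is correct and follows the same route as the paper: the relation $\bX(\ell\bQ) = (\ell\bQ)\bY$ with $\bY\in R^{n\times n}$ sends each generator $\ell\bQ v$ to $\ell\bQ(\bY v)$, and $R$-linearity handles finite sums. Your additional check that $\operatorname{Im}_R(\cL\bQ)$ is a submodule of $R^n$ is correct. So is your remark that neither the symmetry of $\bX$ nor $\bQ^{\T}\bQ\in R^{n\times n}$ is used.
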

    \begin{proof}
        By \eqref{eq:CalmEel}, there exists $\bY \in R^{n\times n}$ with $\bX (\ell\bQ) = (\ell\bQ)\bY$ for every $\ell \in \cL $.  
        Recall from the definition \eqref{eq: Def_ImLQ} that every vector in $\operatorname{Im}_R(\cL \bQ)$ is a $R$-linear combination of vectors of the form $(\ell \bQ) v$ with $v\in R^n$ and $\ell \in \cL$. 
        It hence follows that  $\bX(\Im_{R}(\cL \bQ)) \subseteq \Im_{R}(\cL \bQ)$, as desired.  
    \end{proof}
    \begin{lemma}\label{lem: IsotropyImQ}
        It holds for every $q_1,q_2 \in \operatorname{Im}_R(\cL \bQ)$ that $q_1^{\T} q_2 \equiv 0 \bmod \cL^2$.
    \end{lemma}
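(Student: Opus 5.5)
By bilinearity, it suffices to check the claim on generators. Every element of $\operatorname{Im}_R(\cL\bQ)$ is a finite $R$-linear combination of vectors $\ell\bQ v$ with $\ell\in\cL$ and $v\in R^n$. The congruence $q_1^{\T}q_2\equiv 0\bmod \cL^2$ means $q_1^{\T}q_2\in\cL^2$, and $\cL^2$ is an ideal of $R$. So it is enough to show
\begin{equation*}
    (\ell_1\bQ v_1)^{\T}(\ell_2\bQ v_2)\in\cL^2
\end{equation*}
for all $\ell_1,\ell_2\in\cL$ and $v_1,v_2\in R^n$. Expanding $q_1^{\T}q_2$ for general combinations then gives an $R$-linear combination of such terms, which again lies in $\cL^2$.

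For the generators, I rewrite the product as
\begin{equation*}
    (\ell_1\bQ v_1)^{\T}(\ell_2\bQ v_2) = \ell_1\ell_2\, v_1^{\T}(\bQ^{\T}\bQ)v_2 .
\end{equation*}
The assumption \eqref{eq:CalmEel} gives $\bQ^{\T}\bQ\in R^{n\times n}$. Since $v_1,v_2\in R^n$, the scalar $v_1^{\T}(\bQ^{\T}\bQ)v_2$ lies in $R$. Hence the product is $\ell_1\ell_2$ times an element of $R$. Because $\ell_1\ell_2\in\cL^2$ by the definition of the ideal product, the product lies in $\cL^2$, as required.

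There is no genuine obstacle. The only point to be careful about is that "$\bmod\ \cL^2$" is read as membership in the ideal $\cL^2\subseteq R$. This in turn requires $q_1^{\T}q_2$ to lie in $R$ in the first place, which the computation above establishes along the way.
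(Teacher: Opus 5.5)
Your proof is correct and matches the paper's argument: both use $\bQ^{\T}\bQ\in R^{n\times n}$ from \eqref{eq:CalmEel} to get $(\ell_1\bQ v_1)^{\T}(\ell_2\bQ v_2)=\ell_1\ell_2\,v_1^{\T}\bQ^{\T}\bQ v_2\in\cL^2$ on generators. You then extend to all of $\operatorname{Im}_R(\cL\bQ)$ by bilinearity, which is exactly what the paper does.
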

    \begin{proof}
        By \eqref{eq:CalmEel}, we have $v_1^{\T} \bQ^{\T} \bQ v_2 \in R$ for all $v_1,v_2 \in R^n$.
        Hence, $(\ell_1v_1)^{\T} \bQ^{\T} \bQ (\ell_2 v_2) \in \cL^2$ for every $\ell_1,\ell_2 \in \cL$.  
        The claim is hence immediate from the definition \eqref{eq: Def_ImLQ}. 
    \end{proof}
    
    \subsubsection{Ideal arithmetic}
    
    We will rely on the ideal arithmetic of Dedekind domains and hence recall the properties for easy reference. 
    Recall that a \emph{fractional ideal} is a $R$-submodule $\cF \subseteq K$  of the fraction field  $K$ such that  $r\cF \subseteq R$ for some non-zero $r\in R$. 
    Recall that $\cF^{-1} \de \{k\in K : k\cF \subseteq R  \}$.  
    \begin{proposition}\label{prop: IdealArithmetic}
        Every nonzero fractional ideal $\cF \subseteq K$ admits a factorization $\cF = \cP_{1}^{f_1} \cdots \cP_{m}^{f_m}$ with $f_j  \in \bbZ$ and distinct prime ideals $\cP_j \subseteq R$, which is unique up to reordering of the factors. 
        If $\cG = \cP_{1}^{g_1} \cdots \cP_{m}^{g_m}$ is another fractional ideal, then   
        \begin{equation}
            \textstyle \cF + \cG =  \prod_{j=1}^m \cP_{j}^{\min\{f_j, g_j \}}, \ \ \cF \cap \cG = \prod_{j=1}^m \cP_{j}^{\max\{f_j,g_j\}}, \ \ \cF  \cG = \prod_{j=1}^m \cP_j^{f_j + g_j}
        \end{equation}
        In particular, $\cF \subseteq \cG$ if and only if $f_j \geq g_j$ for every $j$.  
    \end{proposition}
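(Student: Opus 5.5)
The plan is to reduce everything to integral ideals, where the definition of a Dedekind domain (existence and uniqueness of prime factorizations) applies. The one structural fact needed beyond that definition is that every nonzero prime $\cP$ is invertible, i.e.\ $\cP\cP^{-1}=R$. I would take this as the standard input; it is the main obstacle if one insists on a self-contained argument. If I had to prove it, I would first note that $\cP\subseteq\cP\cP^{-1}\subseteq R$. By maximality of $\cP$, $\cP\cP^{-1}$ then equals $\cP$ or $R$. To exclude the first case, I would pick $0\neq a\in\cP$, factor $aR$, and choose a minimal product of primes dividing $aR$ that is not contained in $a\cP$. This produces an element of $\cP^{-1}\setminus R$, and integrality (Noetherian plus integrally closed) shows it cannot stabilize $\cP$.

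Existence of the factorization comes next. Given a nonzero fractional ideal $\cF$, choose $0\neq r\in R$ with $r\cF\subseteq R$. Factor the two integral ideals $r\cF=\prod\cP_j^{a_j}$ and $rR=\prod\cP_j^{b_j}$, padding both with zero exponents. Invertibility of primes gives $\cF=(rR)^{-1}(r\cF)=\prod\cP_j^{a_j-b_j}$.

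For uniqueness, suppose $\prod\cP_j^{f_j}=\prod\cP_j^{f'_j}$. Multiply both sides by $\prod\cP_j^{N}$ with $N$ large, so that all exponents become nonnegative and both sides are integral ideals. Uniqueness for integral ideals then forces $f_j=f'_j$. The product formula is immediate from commutativity and $\cP\cP^{-1}=R$.

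For the inclusion criterion, observe that $\cF\subseteq\cG$ if and only if $\cG^{-1}\cF\subseteq\cG^{-1}\cG=R$, where $\cG^{-1}=\prod\cP_j^{-g_j}$ by invertibility. So it suffices to show that a fractional ideal $\prod\cP_j^{h_j}$ is contained in $R$ exactly when all $h_j\ge0$.
\begin{itemize}
\item If all $h_j\ge 0$, this is clear.
\item Conversely, if the ideal is contained in $R$, then it is an integral ideal. Its integral factorization has nonnegative exponents, and by uniqueness of the fractional factorization these exponents are the $h_j$.
\end{itemize}

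The sum and intersection formulas then follow formally from the inclusion criterion, since fractional ideals ordered by inclusion correspond to exponent vectors ordered reversely.
\begin{itemize}
\item $\cF+\cG$ is the smallest fractional ideal containing both $\cF$ and $\cG$. Write it as $\prod\cP_j^{s_j}$. The containments $\cF\subseteq\cF+\cG$ and $\cG\subseteq\cF+\cG$ give $s_j\le\min\{f_j,g_j\}$. Conversely, $\prod\cP_j^{\min\{f_j,g_j\}}$ contains both $\cF$ and $\cG$, hence contains $\cF+\cG$, which gives $s_j\ge\min\{f_j,g_j\}$.
\item Dually, $\cF\cap\cG$ is the largest fractional ideal contained in both. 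It is nonzero because it contains $\cF\cG$. The same argument yields the exponent $\max\{f_j,g_j\}$.
\end{itemize}
Here one should check that $\cF+\cG$ and $\cF\cap\cG$ are again fractional ideals, which is clear by clearing denominators, and that only the finitely many primes appearing in $\cF$ or $\cG$ can occur in them. The latter follows because any exponent vector squeezed between two vectors that vanish outside $\{\cP_1,\ldots,\cP_m\}$ must itself vanish there.
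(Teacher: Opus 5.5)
Your proposal is correct in substance. It differs from the paper mainly in that the paper gives no argument at all: it simply cites the standard treatments (Zariski--Samuel, Marcus, Dummit--Foote). You instead isolate the one nontrivial input, invertibility of nonzero primes, and derive everything else from it together with unique factorization of integral ideals:
\begin{itemize}
\item existence by clearing a single denominator;
\item uniqueness by shifting all exponents up by $N$;
\item the product rule from $\cP\cP^{-1}=R$;
\item the inclusion criterion by multiplying with $\cG^{-1}$;
\item the $\min$/$\max$ formulas from the resulting order-reversing correspondence between fractional ideals and exponent vectors.
\end{itemize}
The citation is economical and leaves the genuinely ring-theoretic work to the textbooks. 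Your route makes transparent that the sum and intersection formulas follow purely order-theoretically from the inclusion criterion.

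Two small corrections. First, the claim that $\cF\cap\cG$ is nonzero ``because it contains $\cF\cG$'' is false for fractional ideals. For $R=\bbZ$ and $\cF=\cG=\frac{1}{2}\bbZ$ one has $\cF\cG=\frac{1}{4}\bbZ\not\subseteq\cF\cap\cG$; the inclusion $\cF\cG\subseteq\cF\cap\cG$ only holds for integral ideals. Nonzeroness follows anyway from your own argument, since the nonzero ideal $\prod_j\cP_j^{\max\{f_j,g_j\}}$ is contained in $\cF\cap\cG$ by the inclusion criterion.

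Second, your sketch of invertibility uses that $R$ is integrally closed, which is not part of the paper's definition (a Noetherian domain in which ideals factor into primes). With that definition, either cite invertibility as you propose, or use the Zariski--Samuel argument. That argument first shows invertible primes are maximal. Then, for a nonzero prime $\cP$ and $0\neq a\in\cP$, every prime factor of the invertible ideal $aR$ is invertible, hence maximal, and $\cP$ must equal one of them. Also, in your sketch the chosen product should be one \emph{not contained in $aR$} (not $a\cP$). If $aR\supseteq\cP\cP_2\cdots\cP_r$ with $r$ minimal, then any $b\in\cP_2\cdots\cP_r\setminus aR$ gives $b/a\in\cP^{-1}\setminus R$.
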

    \begin{proof}
        See \cite[Chapter 5, Theorem 11]{zariski2013commutative}, \cite[Chapter 3]{marcus1977number}, or \cite[Chapter 16]{dummit_foote_2004}. 
    \end{proof}

    \begin{lemma}\label{lem: PrimeFactorization}
        Suppose that $\cL \neq R$ and consider a nonzero prime divisor $\cP \mid \cL$.  
        Then, there exists at least one $\ell \in \cL$ with 
        $
            \ell\bQ \not\equiv 0 \bmod  \cP.
        $        
    \end{lemma}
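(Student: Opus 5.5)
The plan is to argue by contradiction, using the ideal arithmetic of Proposition \ref{prop: IdealArithmetic}. Suppose that $\ell\bQ \equiv 0 \bmod \cP$ for every $\ell \in \cL$, where congruence means that every entry of $\ell \bQ$ lies in $\cP$. We then aim to show that this forces $\cP^{-1}\cL \subseteq \cL$, which is impossible for a nonzero ideal $\cL$ divisible by $\cP$.

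The first step is to restate the hypothesis entry-wise. Let $q_{ij} \in K$ denote the entries of $\bQ$ and set $\cF_{ij} \de \{ r\in R : r q_{ij} \in R\}$. Then $\cL = \bigcap_{i,j} \cF_{ij}$. If $q_{ij}=0$ we have $\cF_{ij} = R$. Otherwise $\cF_{ij} = R \cap q_{ij}^{-1}R$ is a nonzero ideal, because $\bQ$ has finitely many entries and each has a denominator. In particular $\cL$ is a nonzero ideal, and since $\cP \mid \cL$ we may write $\cL = \cP\cC$ for some ideal $\cC$.

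The second step uses the hypothesis. It says that $\cL q_{ij} \subseteq \cP$ for all $i,j$. Multiplying by $\cP^{-1}$ gives $(\cP^{-1}\cL) q_{ij} \subseteq \cP^{-1}\cP = R$. Consequently every $k \in \cP^{-1}\cL$ satisfies $k\bQ \in R^{n\times n}$. Moreover $\cP^{-1}\cL = \cC \subseteq R$, since $\cP \mid \cL$. By the definition \eqref{eq: Def_L} of the level, this yields $\cC \subseteq \cL = \cP\cC$.

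The final step derives the contradiction. Comparing exponents of $\cP$ via Proposition \ref{prop: IdealArithmetic}, the inclusion $\cC \subseteq \cP\cC$ would require $v_\cP(\cC) \geq v_\cP(\cC)+1$, which is impossible because $\cC$ is nonzero. Hence some $\ell \in \cL$ has $\ell\bQ \not\equiv 0 \bmod \cP$.

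The only delicate point is making sure $\cL$ is nonzero, so that the fractional ideal cancellation $\cP^{-1}\cP = R$ and the valuation comparison are legitimate. This follows from the fact that the finitely many entries of $\bQ$ each admit a nonzero common denominator, so their product lies in $\cL$.
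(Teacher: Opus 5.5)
Your proof is correct and is essentially the paper's argument. The paper also passes from $\cL$ to $\cP^{-1}\cL$, written there as $\cP_i^{e_i-1}\prod_{j\neq i}\cP_j^{e_j}$, which is exactly your $\cC$, and contradicts the maximality of $\cL$ among ideals clearing the denominators of $\bQ$. Your explicit check that $\cL \neq 0$ spells out a point the paper uses implicitly when it takes the prime factorization of $\cL$.
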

    \begin{proof}
        Consider the prime factorization $\cL = \prod_{j=1}^m \cP_j^{e_j}$ and suppose to the contrary that
        $
            \cL \times  (\sum_{v,w=1}^n \bQ_{v,w}R) \subseteq \cP_iR
        $ for some $\cP_i$ with $e_i \geq 1$.
        Then, by the inclusion and multiplication properties of fractional ideals,
        \begin{equation}
            \textstyle (\cP_i^{e_i -1} \prod_{j\neq i} \cP_j^{e_j}) \times  (\sum_{v,w=1}^n \bQ_{v,w}R) \subseteq R.
        \end{equation}
        This would however contradict that $\cL$ is the maximal (non-fractional) ideal satisfying  $\cL \times (\sum_{v,w=1}^n \bQ_{v,w}R ) \subseteq R$; recall the definition \eqref{eq: Def_L}. 
    \end{proof}

    \subsection{Discriminant and characteristic polynomial}\label{sec: Proof_DivLevelDivDelta}
     Among other things, we here prove a first step towards Theorem \ref{thm: MainPlevelP2disc} by establishing that $\cP \mid \cL$ implies that $\cP \mid \Delta_{\bX}R$.
     (The strengthening to $\cP^2 \mid \Delta_{\bX}R$ is proved in Section \ref{sec: ProofMainP2disc}.)  
     We rely on the following classical property of the discriminant:
    \begin{lemma}\label{lem: DiscriminantField}
        Let $\bbF$ be a field. 
        Then, $g^2\mid \phi_{\bM}$ for $g\in \bbF[x]$ with $\operatorname{deg}(g)\geq 1$ implies that $\Delta_{\bM} = 0$ in $\bbF$. 
        Moreover, the converse also holds if $\bbF$ is a perfect field.   
    \end{lemma}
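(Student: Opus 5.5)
The plan is to reduce both directions to the Jordan-type structure of $\bM$ over the algebraic closure $\overline{\bbF}$. We use that $\det$ commutes with field extension, and that $\phi_{\bM}'(\bM)$ is a polynomial in $\bM$. Hence $\Delta_{\bM}=0$ in $\bbF$ if and only if $\phi_{\bM}'(\bM)$ is singular over $\overline{\bbF}$. Over $\overline{\bbF}$, triangularize $\bM$, which is always possible since $\overline{\bbF}$ is algebraically closed. The matrix $\phi_{\bM}'(\bM)$ is then triangular with diagonal entries $\phi_{\bM}'(\lambda_i)$, where $\lambda_1,\ldots,\lambda_n$ are the eigenvalues of $\bM$ listed with multiplicity. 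Consequently
\begin{equation}
\Delta_{\bM} = \prod_{i=1}^n \phi_{\bM}'(\lambda_i),
\end{equation}
and $\Delta_{\bM}=0$ if and only if some root $\lambda$ of $\phi_{\bM}$ in $\overline{\bbF}$ is also a root of $\phi_{\bM}'$.

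For the first implication, suppose $g^2\mid\phi_{\bM}$ with $\deg g\geq 1$, and take a root $\lambda\in\overline{\bbF}$ of $g$. Writing $\phi_{\bM}=g^2h$ gives
\begin{equation}
\phi_{\bM}' = 2gg'h + g^2h',
\end{equation}
so $\lambda$ is a root of both $\phi_{\bM}$ and $\phi_{\bM}'$. By the displayed product formula, $\Delta_{\bM}=0$. This direction needs no assumption on $\bbF$.

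For the converse, assume $\bbF$ is perfect and $\Delta_{\bM}=0$. Then some $\lambda\in\overline{\bbF}$ is a common root of $\phi_{\bM}$ and $\phi_{\bM}'$. Let $\gamma\in\bbF[x]$ be the minimal polynomial of $\lambda$ over $\bbF$; it is irreducible and divides $\phi_{\bM}$. Write $\phi_{\bM}=\gamma^d u$ with $\gamma\nmid u$, and we want $d\geq 2$.

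Suppose instead that $d=1$. Then
\begin{equation}
\phi_{\bM}'=\gamma' u+\gamma u'.
\end{equation}
Since $\gamma(\lambda)=0$ and $\phi_{\bM}'(\lambda)=0$, this gives $\gamma'(\lambda)u(\lambda)=0$. Because $\gamma$ is irreducible and $\gamma\nmid u$, we have $u(\lambda)\neq0$, hence $\gamma'(\lambda)=0$. As $\gamma$ is the minimal polynomial of $\lambda$, this forces $\gamma\mid\gamma'$, and by degree $\gamma'=0$. This is the main obstacle, and it is exactly where perfectness enters: an irreducible polynomial over a perfect field is separable, so $\gamma'\neq 0$. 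Concretely, in characteristic $p$ the condition $\gamma'=0$ means $\gamma(x)=\sum_i c_i x^{pi}$, and since every $c_i$ is a $p$-th power in a perfect field, $\gamma$ is a $p$-th power and hence reducible. This contradiction shows $d\geq2$, so $g=\gamma$ satisfies $g^2\mid\phi_{\bM}$.

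Alternatively, the equivalence can be obtained directly from Remark \ref{rem: DiscPoly}, $\Delta_{\bM}=\pm\prod_{i<j}(\lambda_i-\lambda_j)^2$, which vanishes iff $\phi_{\bM}$ has a repeated root in $\overline{\bbF}$. The step from a repeated root in $\overline{\bbF}$ to a square factor over $\bbF$ still requires the same separability argument.
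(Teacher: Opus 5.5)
Your proof is correct and takes essentially the paper's route: reduce $\Delta_{\bM}=0$ to $\phi_{\bM}$ and $\phi_{\bM}'$ sharing a root, then apply the product rule to the factorization of $\phi_{\bM}$ and use that irreducible polynomials over a perfect field are separable. Your version is a bit more explicit than the paper's, since it proves the formula $\Delta_{\bM}=\prod_i \phi_{\bM}'(\lambda_i)$ by triangularizing, which the paper takes for granted, and it makes clear that perfectness is needed exactly for the separability of $\gamma$.
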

    \begin{proof}
        Consider the factorization of the characteristic polynomial, \ie $\phi_{\bM} = \prod_{j}g_j^{d_j}$ with $g_j \in \bbF[x]$ coprime, monic, and irreducible. 
        Then, 
        \begin{equation}
            \textstyle \phi_{\bM}' = \sum_j d_j g_j^{d_j -1} g_j'\prod_{i\neq j} g_i^{d_i}.\label{eq:CalmParrot}
        \end{equation}
        Note that $f\in \bbF[x]$ satisfies $\det(f(\bM)) = 0$ if and only if $f$ has shared roots with $\phi_{\bM}$ in the latter's splitting field. 
        Shared roots are implied by shared irreducible divisor and this is an equivalence if $\bbF$ is perfect.
        The result hence follows from the definition $\Delta_{\bM} = \det(\phi_{\bM}'(\bM))$ since \eqref{eq:CalmParrot} yields that $\phi_{\bM}'\equiv 0 \bmod g_j$ if and only if $d_j\geq 2$. 
    \end{proof}

    Let us assume that $\cL\neq R$. 
    Motivated by Lemma \ref{lem: DiscriminantField}, our goal in the subsequent results is to study the characteristic polynomial of $\bX$ over $\bbF_\cP = R/\cP$ for a prime ideal $\cP \mid \cL$.  
    We rely on the properties of $\operatorname{Im}_R(\cL \bQ)$ from Section \ref{sec: StructureImage}.   

    Let $\sQ_0 \subseteq \bbF_{\cP}^n$ be the subspace found as the reduction of $\operatorname{Im}_R(\cL \bQ)$ modulo $\cP$.
    Further, for every $k \geq 1$ let $\sQ_k \subseteq \bbF_{\cP}^n$ be the reduction modulo $\cP$ of $\{q\in R^n: \forall r\in \cP^k,\,  rq \in \operatorname{Im}_R(\cL\bQ)   + (\cP^{k+1})^n  \}$.
    This defines a nested sequence of subspaces: 
    \begin{equation}
        \sQ_0 \subseteq \sQ_1 \subseteq \sQ_2 \subseteq \cdots \label{eq:ColdBird} 
    \end{equation}
    Informally, the vectors in the difference $\sQ_{k}\setminus \sQ_{k-1}$ can be interpreted as the new directions that are observed when going from the reduction of $\operatorname{Im}_R(\cL \bQ)$ modulo $\cP^k$ to the reduction modulo $\cP^{k+1}$. 
        
    \begin{lemma}\label{lem: NonEmpty}
        Assume that $\cP \mid \cL$. 
        Then, $\sQ_k \neq 0$ for all $k\geq 0$.   
    \end{lemma}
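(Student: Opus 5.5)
Since the subspaces are nested as in \eqref{eq:ColdBird}, it suffices to show $\sQ_0 \neq 0$. Indeed, once $\sQ_0\neq 0$ we get $\sQ_k \supseteq \sQ_0 \neq 0$ for every $k$.

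For completeness I would also verify the nesting itself, since it is what lets everything reduce to $k=0$. For $k\geq 1$, take $q \in \operatorname{Im}_R(\cL\bQ)$ and $r\in \cP^k$. Then $rq \in \operatorname{Im}_R(\cL\bQ)$, because this set is an $R$-module, so the reduction of $q$ lies in $\sQ_k$. The same argument places $\sQ_{k-1}$ inside $\sQ_k$: if $rq \in \operatorname{Im}_R(\cL \bQ) + (\cP^{k})^n$ for all $r\in\cP^{k-1}$, then for $r'\in \cP^k$ we can write $r' = \sum r_i s_i$ with $r_i\in\cP^{k-1}$ and $s_i\in\cP$. This gives $r'q \in \operatorname{Im}_R(\cL\bQ) + (\cP^{k+1})^n$, since $s_i (\cP^k)^n \subseteq (\cP^{k+1})^n$.

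To show $\sQ_0 \neq 0$, I apply Lemma \ref{lem: PrimeFactorization}, which is available because $\cP \mid \cL$ forces $\cL \neq R$. It gives some $\ell \in \cL$ with $\ell \bQ \not\equiv 0 \bmod \cP$. Hence some column $\ell \bQ e_j$ is a vector in $R^n$ that is nonzero modulo $\cP$. This vector lies in $\operatorname{Im}_R(\cL \bQ)$, taking $v = e_j$ in \eqref{eq: Def_ImLQ}. Its reduction modulo $\cP$ is therefore a nonzero element of $\sQ_0$.

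There is no real obstacle here. The only point needing care is that $\ell\bQ$ has entries in $R$, so reduction modulo $\cP$ makes sense; this holds by the definition \eqref{eq: Def_L}.
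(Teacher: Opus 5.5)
Your proposal is correct and follows the paper's proof: reduce to $k=0$ via the nesting \eqref{eq:ColdBird}, then use Lemma \ref{lem: PrimeFactorization} to get a column of $\ell\bQ$ in $\operatorname{Im}_R(\cL\bQ)$ that is nonzero modulo $\cP$. You also check the nesting explicitly, which the paper takes for granted, and that check is sound.
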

    \begin{proof}
        Due to the inclusions \eqref{eq:ColdBird}, it suffices to consider $k=0$. 
        The latter case follows from Lemma \ref{lem: PrimeFactorization} since $\ell \bQ\not\equiv 0 \bmod \cP$ for some $\ell \in \cL$ implies that $\operatorname{Im}_R(\cL \bQ)$ contains at least one vector with nonzero reduction modulo $\cP$. 
    \end{proof}
    Given a subspace $V\subseteq \bbF_{\cP}^n$, the \emph{orthogonal dual} is the subspace defined by $V^{\perp} \de \{w\in \bbF_{\cP}^n : \forall v\in V,\, w^\T v = 0 \}$. 
    A subspace with $V \subseteq V^{\perp}$ is called \emph{isotropic}.
    Such subspaces are precisely the reductions modulo $\cP$ of submodules of $R^n$ that are isotropic modulo $\cP$ in the sense of Definition \ref{def: Iso}. 
    \begin{lemma}\label{lem: IsotropyQk}
        Assume that $\cP^e \mid \cL$. 
        Then, $\sQ_k$ is isotropic for every $k \leq e-1$. 
    \end{lemma}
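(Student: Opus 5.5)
The plan is to reduce the claim to Lemma \ref{lem: IsotropyImQ} by unwinding the definition of $\sQ_k$. Fix $k\leq e-1$ and two vectors of $\sQ_k$, with lifts $q_1,q_2\in R^n$. By definition of $\sQ_k$, every $r\in\cP^k$ satisfies $rq_i\in \operatorname{Im}_R(\cL\bQ)+(\cP^{k+1})^n$. For $k=0$ the lifts may be taken in $\operatorname{Im}_R(\cL\bQ)$ itself, and Lemma \ref{lem: IsotropyImQ} gives $q_1^{\T}q_2\in\cL^2\subseteq\cP^{2e}\subseteq\cP$, since $\cP^e\mid\cL$ means $\cL\subseteq\cP^e$ by Proposition \ref{prop: IdealArithmetic}. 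For general $k$ we show $q_1^{\T}q_2\in\cP$.

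Take arbitrary $r_1,r_2\in\cP^k$ and write $r_iq_i=m_i+u_i$ with $m_i\in\operatorname{Im}_R(\cL\bQ)$ and $u_i\in(\cP^{k+1})^n$. Then
\begin{equation}
r_1r_2\,q_1^{\T}q_2 = m_1^{\T}m_2 + m_1^{\T}u_2 + u_1^{\T}m_2 + u_1^{\T}u_2 .
\end{equation}
The terms behave as follows. First, $m_1^{\T}m_2\in\cL^2\subseteq\cP^{2e}\subseteq\cP^{2k+1}$ because $2e\geq 2k+2$. Second, $u_1^{\T}u_2\in\cP^{2k+2}$. For the cross terms, note that $m_i=r_iq_i-u_i\in(\cP^k)^n$, since $r_iq_i\in(\cP^k)^n$. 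Hence $m_1^{\T}u_2\in\cP^{k}\cP^{k+1}=\cP^{2k+1}$, and likewise for $u_1^{\T}m_2$. Altogether $r_1r_2\,q_1^{\T}q_2\in\cP^{2k+1}$ for all $r_1,r_2\in\cP^k$, so $\cP^{2k}\cdot(q_1^{\T}q_2)R\subseteq\cP^{2k+1}$.

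It remains to cancel $\cP^{2k}$, and this is the only step that uses the Dedekind structure. If $q_1^{\T}q_2=0$ there is nothing to prove. Otherwise, Proposition \ref{prop: IdealArithmetic} says the exponent of $\cP$ in $\cP^{2k}(q_1^{\T}q_2)R$ is $2k+v_\cP(q_1^{\T}q_2)\geq 2k+1$, so $v_\cP(q_1^{\T}q_2)\geq1$. Thus $q_1^{\T}q_2\equiv 0\bmod\cP$.

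Finally, the residue of $q_1^{\T}q_2$ modulo $\cP$ depends only on the classes of $q_1,q_2$ in $\bbF_\cP^n$, so every pair of vectors in $\sQ_k$ has inner product zero. Hence $\sQ_k\subseteq\sQ_k^{\perp}$, which is the claimed isotropy. The main point to watch is the bookkeeping of exponents in the cross terms, where the bound $k\leq e-1$ (so that $2e\geq 2k+1$) is exactly what is needed.
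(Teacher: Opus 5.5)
Your proof is correct and follows the paper's argument: for $r_1,r_2\in\cP^k$ you show $(r_1q_1)^{\T}(r_2q_2)\in\cP^{2k+1}$ via Lemma~\ref{lem: IsotropyImQ}, deduce $\cP^{2k}(q_1^{\T}q_2R)\subseteq\cP^{2k+1}$, and cancel $\cP^{2k}$ using Proposition~\ref{prop: IdealArithmetic}. Your treatment of the cross terms, using that the $\operatorname{Im}_R(\cL\bQ)$-component of $r_iq_i$ already lies in $(\cP^k)^n$, spells out a step that the paper's one-line appeal to Lemma~\ref{lem: IsotropyImQ} leaves implicit.
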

    \begin{proof}
        Let $q_1,q_2 \in \{q\in R^n: \forall r\in \cP^k,\,  rq \in \operatorname{Im}_R(\cL\bQ)  + (\cP^{k+1})^n  \}$. 
        Then, by definition of $\sQ_k$, it suffices to show that $q_1^{\T}q_2 \equiv 0 \bmod \cP$. 
        
        Using Lemma \ref{lem: IsotropyImQ}, it holds for all $r_1,r_2 \in \cP^k$ that the vectors $v_i \de r_iq_i$ satisfy $v_1^{\T}v_2 \equiv 0 \bmod \cP^{2k+1}$. 
        Thus, we have an ideal inclusion $\cP^{2k} (q_1^{\T} q_2 R) \subseteq \cP^{2k+1}$. 
        The ideal arithmetic from Proposition \ref{prop: IdealArithmetic} then yields that $q_1^{\T} q_2 R \subseteq \cP$, as desired.
    \end{proof}
    Given a polynomial $g\in \bbF_{\cP}[x]$, let $\sV_g  \subseteq\bbF_{\cP}^n$ denote the primary subspace: 
    \begin{equation}
         \sV_g \de \bigl\{v\in \bbF_\cP^n: \exists j\geq 1,\,  g(\bX)^j v  = 0 \bigr\}.
    \end{equation}
    Let it be understood that $\operatorname{dim}_\cP(\cdot)$,  $\operatorname{Im}_{\cP}(\cdot)$, and $\operatorname{ker}_{\cP}(\cdot)$ refer to the respective operations over the field $\bbF_{\cP}$.    
    So, for instance, $\ker_{\cP}(\bM) \subseteq\bbF_{\cP}^n$ for a matrix $\bM\in R^{n\times n}$ refers to the kernel over $\bbF_{\cP}$ of the matrix $\bM \bmod \cP$. 
    
    \begin{lemma}\label{lem: Psi}
        Assume that $\cP^e \mid \cL$.  
        Then, for every $k\leq e-1$ and $g\in \bbF_{\cP}[x]$,  
        \begin{equation}
            \dim_{\cP}(\sV_{g}) \geq 2\operatorname{dim}_{\cP}(\sQ_k\cap \sV_{g}).\label{eq:GladClaw} 
        \end{equation}
    \end{lemma}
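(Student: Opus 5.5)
Reduce to linear algebra over $\bbF_{\cP}$ and show that $\sQ_k\cap\sV_g$ is an isotropic subspace of a space carrying a nondegenerate symmetric form. Its dimension is then at most half that space's dimension.

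First we need that $\sQ_k$ is $\bX$-invariant modulo $\cP$. By Lemma \ref{lem: Restrict}, $\bX$ maps $\operatorname{Im}_R(\cL\bQ)$ into itself. Hence if $rq\in \operatorname{Im}_R(\cL\bQ)+(\cP^{k+1})^n$ for all $r\in\cP^k$, then also $r\bX q\in \operatorname{Im}_R(\cL\bQ)+(\cP^{k+1})^n$. So $\bX\sQ_k\subseteq\sQ_k$ after reduction. Since $\sV_g$ is also $\bX$-invariant, so is $W\de\sQ_k\cap\sV_g$. By Lemma \ref{lem: IsotropyQk}, $\sQ_k$ is isotropic for $k\le e-1$, and therefore $W$ is isotropic as well.

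Next, decompose $\bbF_{\cP}^n=\bigoplus_h \sV_h$ over the distinct monic irreducible factors $h$ of $\phi_{\bX}\bmod\cP$ (primary decomposition). We may assume $g$ is irreducible, or handle general $g$ by noting that $\sV_g=\bigoplus_{h\mid g}\sV_h$ with $W$ splitting accordingly; this splitting holds because $W$ is $\bX$-invariant, so the primary projections, being polynomials in $\bX$, preserve it. The key point is that since $\bX$ is symmetric, distinct primary components are orthogonal. Indeed, for $v\in\sV_h$ and $w\in\sV_{h'}$ with $h,h'$ coprime, pick polynomials with $a h^j+b h'^{j}=1$. Then $w=a(\bX)h(\bX)^j w$, and
\[
v^{\T}w=v^{\T}a(\bX)h(\bX)^jw=\bigl(h(\bX)^j a(\bX)v\bigr)^{\T}w=0,
\]
using symmetry and that $h(\bX)^j$ kills $\sV_h$ for large $j$. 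Since the standard form on $\bbF_{\cP}^n$ is nondegenerate and the $\sV_h$ are mutually orthogonal and span, the restriction of the form to each $\sV_h$, and hence to $\sV_g$, is nondegenerate.

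Finally, for a nondegenerate symmetric bilinear form on a finite-dimensional space $V=\sV_g$ and an isotropic subspace $W\subseteq V$, we have $W\subseteq W^{\perp_V}$ and $\dim W^{\perp_V}=\dim V-\dim W$. This gives $2\dim W\le\dim V$, which is \eqref{eq:GladClaw}. This holds in any characteristic, including $2$, since only nondegeneracy is used. The main obstacle is the orthogonality and nondegeneracy step on primary components, together with the care needed for the invariance of $\sQ_k$ under $\bX$. If $g$ is not coprime-factored, the reduction to irreducible factors via $\bX$-invariance of $W$ handles it.
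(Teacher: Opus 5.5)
Your proof is correct. It uses the same ingredients as the paper's: isotropy of $\sQ_k$ from Lemma~\ref{lem: IsotropyQk}, symmetry of $\bX$, and a dimension count. The key step, however, is justified differently.

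You show that the standard form restricted to $\sV_g$ is nondegenerate. You do this by proving that distinct primary components are mutually orthogonal (via B\'ezout), and then you apply the bound $2\dim W\le\dim V$ for isotropic subspaces of a nondegenerate space.

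The paper stays in the ambient space $\bbF_{\cP}^n$. It fixes $j$ with $\ker_{\cP}(g(\bX)^j)=\sV_g$ and notes two facts: $\operatorname{Im}_{\cP}(g(\bX)^j)\cap\sV_g=0$, and, by symmetry, $\operatorname{Im}_{\cP}(g(\bX)^j)\subseteq\sV_g^{\perp}\subseteq(\sQ_k\cap\sV_g)^{\perp}$. It then counts dimensions of the two independent subspaces $\sQ_k\cap\sV_g$ and $\operatorname{Im}_{\cP}(g(\bX)^j)$ inside $(\sQ_k\cap\sV_g)^{\perp}$.

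The paper's route handles an arbitrary $g$ in one stroke, with no reduction to irreducible factors. It implicitly shows that $\sV_g^{\perp}=\operatorname{Im}_{\cP}(g(\bX)^j)$ is a complement of $\sV_g$, which is exactly your nondegeneracy. Your route gives the slightly more structural fact that $\bbF_{\cP}^n$ is an orthogonal sum of nondegenerate primary components.

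Your argument that $\sQ_k$ is $\bX$-invariant, and the splitting of $W$ along primary components, are correct but not needed. Once $\sV_g$ is nondegenerate, every isotropic subspace of it satisfies the bound, whether or not it is invariant.
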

    \begin{proof}         
        Note that $v\not\in \sV_{g}$ implies that $g(\bX)^jv \not\in \sV_{g}$ for every $j$.
       In other words, a vector $v\in \bbF_{\cP}^n$ can only satisfy $g(\bX)^jv \in \sV_g$ if $v\in \sV_g$. 
       Fix some sufficiently large $j\geq 1$ with $\operatorname{ker}_{\cP}(g(\bX)^j) = \sV_g$ and note that we then have $\Im_{\cP}(g(\bX)^j)\cap  \sV_g = 0$.
       In particular,  
        \begin{equation}
            \Im_{\cP}(g(\bX)^j)\cap  (\sQ_{k} \cap \sV_g) = 0.\label{eq:FairLobster}
        \end{equation} 
        
    Recall from Lemma \ref{lem: IsotropyQk} that $\sQ_k  \subseteq \sQ_k^{\perp}$.
    In particular, $\sQ_k\cap \sV_{g} \subseteq (\sQ_k \cap \sV_{g})^{\perp}$.
    Further, using that $\bX$ is symmetric and that $g(\bX)^j$ annihilates every vector in $\sV_g$ by definition of $j$, we have $q^{\T} g(\bX)^jv =0 $ for every $q\in \sV_{g}$ and $v\in \bbF_{\cP}^n$. 
    Thus,  $\Im_{\cP}(g(\bX)^j) \subseteq  \sV_{g}^{\perp}.$
    In particular, we also have that $\Im_{\cP}(g(\bX)^j) \subseteq (\sQ_k\cap \sV_{g})^{\perp}$.

         We have proved that $\sQ_k\cap \sV_{g}$ and $\Im_{\cP}(g(\bX)^j)$ are both subspaces of $(\sQ_k\cap \sV_{g})^{\perp}$, and these subspaces are independent by \eqref{eq:FairLobster}.
        Hence, 
        \begin{equation}
            \dim_{\cP}\bigl( \sQ_k\cap \sV_{g} \bigr) + \dim_{\cP}\bigl(\Im_{\cP}(g(\bX)^j) \bigr) \leq \dim_{\cP} \bigl( (\sQ_k\cap \sV_{g})^{\perp} \bigr). 
        \end{equation}
        Note that $\dim_{\cP}(\Im_{\cP}(g(\bX)^j)) = n - \dim_{\cP}(\ker_\cP(g(\bX)^j)) = n - \dim_{\cP}(\sV_g)$ and that $\dim_{\cP}((\sQ_k\cap \sV_{g})^{\perp}) = n - \dim_{\cP}(\sQ_k\cap \sV_{g})$ to find \eqref{eq:GladClaw}.  
    \end{proof}
    
    The well-definedness of the restriction of $\bX$ to $\operatorname{Im}_R(\cL \bQ)$ from Lemma \ref{lem: Restrict} implies that $\bX \bmod \cP$ restricts to $\sQ_k$ for every $k$. 
    Let $\psi_k(x) \in \bbF_{\cP}[x]$ denote the characteristic polynomial of this restriction $\bX \mid_{\sQ_k}$. 
    Note that $\operatorname{deg}(\psi_k)\geq 1$ by Lemma \ref{lem: NonEmpty}. 

    \begin{lemma}\label{lem: Sq}
        Assume that $\cP^e \mid \cL$ and consider some $k\leq e-1$. 
        Then, $\psi_k^2 \mid \phi_{\bX}$ as polynomials over $\bbF_{\cP}$. 
    \end{lemma}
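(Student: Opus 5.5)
We prove $\psi_k^2 \mid \phi_{\bX}$ over $\bbF_{\cP}$ by working one irreducible factor at a time and applying Lemma \ref{lem: Psi} on each primary component. Factor $\phi_{\bX} \equiv \prod_{i=1}^m \gamma_i^{d_i} \bmod \cP$ with distinct monic irreducible $\gamma_i \in \bbF_{\cP}[x]$.

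First, by the primary decomposition theorem over the field $\bbF_{\cP}$, we have $\bbF_{\cP}^n = \bigoplus_i \sV_{\gamma_i}$. Moreover, $\sV_{\gamma_i}$ is $\bX$-invariant, and the characteristic polynomial of $\bX|_{\sV_{\gamma_i}}$ is $\gamma_i^{d_i}$. In particular, $\dim_{\cP}(\sV_{\gamma_i}) = d_i \deg(\gamma_i)$.

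Second, $\sQ_k$ is $\bX$-invariant (as noted before the statement), so it decomposes along the same primary components: $\sQ_k = \bigoplus_i (\sQ_k \cap \sV_{\gamma_i})$. This holds because the projections onto the primary components are polynomials in $\bX$ and therefore preserve the invariant subspace $\sQ_k$. On each piece $\sQ_k\cap\sV_{\gamma_i}$, the restriction of $\bX$ has characteristic polynomial a power of $\gamma_i$. Hence
\begin{equation}
    \psi_k = \prod_{i=1}^m \gamma_i^{c_i}, \qquad c_i \deg(\gamma_i) = \dim_{\cP}(\sQ_k\cap \sV_{\gamma_i}).
\end{equation}

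Third, apply Lemma \ref{lem: Psi} with $g=\gamma_i$, which is valid since $k\leq e-1$. It gives $d_i\deg(\gamma_i) \geq 2 c_i \deg(\gamma_i)$, so $d_i \geq 2c_i$ for every $i$. Multiplying over $i$ yields $\psi_k^2 = \prod_i \gamma_i^{2c_i} \mid \prod_i \gamma_i^{d_i} = \phi_{\bX}$ over $\bbF_{\cP}$.

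The main obstacle is the second step: showing that $\sQ_k$ splits compatibly with the primary decomposition, and that the characteristic polynomial of an invariant subspace of $\sV_{\gamma_i}$ is exactly $\gamma_i^{c_i}$ with $c_i\deg(\gamma_i)$ equal to its dimension. Both are standard linear algebra, requiring only the projector argument and the fact that the minimal polynomial of $\bX$ on such a subspace divides a power of $\gamma_i$. The rest is bookkeeping with Lemma \ref{lem: Psi}.
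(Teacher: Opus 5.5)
Your proof is correct and follows essentially the same route as the paper: for each irreducible $g$, compare the multiplicity of $g$ in $\phi_{\bX}$ and in $\psi_k$ through the dimensions $\dim_{\cP}(\sV_g)$ and $\dim_{\cP}(\sQ_k\cap\sV_g)$, then conclude with Lemma \ref{lem: Psi}. The only difference is that you spell out why $\sQ_k$ splits along the primary decomposition (via the polynomial projectors), which the paper leaves implicit when it says the same argument applies to $\psi_k$.
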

    \begin{proof}
        Consider some irreducible factor $g\in \bbF_{\cP}[x]$ of $\psi_{k}$ and let $h\geq 1$ be maximal with $g^h \mid \psi_k$. 
        It then suffices to show that $g^{2h} \mid \phi_{\bX}$. 

        By the primary decomposition theorem \cite[C6, Thm.12; C7, Thm.4]{hoffmann1971linear}, the power of $g$ in the characteristic polynomial is proportional to the dimension of the primary subspace: 
        \begin{equation}
            \sup\{d\geq 0: g^d \mid \phi_{\bX} \} = \dim_{\cP}(\sV_g)/ \deg(g).\label{eq:GiddyCity} 
        \end{equation}
        The exact same argument applies with $\phi_{\bX}$ replaced by $\psi_k$, as the latter is the characteristic polynomial of the restriction of $\bX$ to $\sQ_k$: 
        \begin{equation}
            h = \dim_{\cP}(\sQ_k\cap \sV_{g})/\deg(g). \label{eq:RustyUrsa} 
        \end{equation}
        It now follows from Lemma \ref{lem: Psi} that $g^{2h}\mid \phi_{\bX}$, as desired.  
    \end{proof}
    \begin{corollary}
        Assume that $\cP \mid \cL$.
        Then, $\cP \mid \Delta_{\bX}R$. 
    \end{corollary}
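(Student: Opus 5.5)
The plan is to apply Lemma \ref{lem: Sq} with $e=1$ and $k=0$, and then pass to the discriminant through Lemma \ref{lem: DiscriminantField}. The hypothesis $\cP \mid \cL$ is exactly $\cP^1 \mid \cL$, so Lemma \ref{lem: Sq} with $k = 0 \leq e-1$ gives $\psi_0^2 \mid \phi_{\bX}$ over $\bbF_{\cP}$. Here $\psi_0$ is the characteristic polynomial of $\bX \bmod \cP$ restricted to $\sQ_0$. By Lemma \ref{lem: NonEmpty} we have $\sQ_0 \neq 0$, so $\deg(\psi_0) \geq 1$.

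Next I apply the first direction of Lemma \ref{lem: DiscriminantField} over the field $\bbF = \bbF_{\cP}$, with $\bM = \bX \bmod \cP$ and $g = \psi_0$. This direction needs no perfectness assumption. It yields that the discriminant of $\bX \bmod \cP$ vanishes in $\bbF_{\cP}$.

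It remains to identify that discriminant with $\Delta_{\bX} \bmod \cP$. The characteristic polynomial commutes with reduction modulo $\cP$, because it is a polynomial expression in the matrix entries. The same holds for its formal derivative, for evaluation at the matrix, and for the determinant. Hence $\det(\phi_{\bX \bmod \cP}'(\bX \bmod \cP)) = \Delta_{\bX} \bmod \cP$. This gives $\Delta_{\bX} \in \cP$, that is, $\Delta_{\bX} R \subseteq \cP$. In a Dedekind domain, containment is the same as divisibility by Proposition \ref{prop: IdealArithmetic}, so $\cP \mid \Delta_{\bX} R$.

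I do not expect any step to be a real obstacle. The only point needing care is this compatibility of $\Delta_{\bX}$ with reduction modulo $\cP$, and it is routine since every operation involved is polynomial in the entries of $\bX$.
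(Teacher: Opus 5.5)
Your proposal is correct and matches the paper's proof, which combines Lemma \ref{lem: Sq} (with $e=1$, $k=0$, and $\deg(\psi_0)\geq 1$ from Lemma \ref{lem: NonEmpty}) with the first direction of Lemma \ref{lem: DiscriminantField}, and notes that $\cP \mid \Delta_{\bX}R$ if and only if $\Delta_{\bX}$ vanishes in $\bbF_{\cP}$. The only difference is that you spell out that $\Delta_{\bX}$ is compatible with reduction modulo $\cP$, which the paper leaves implicit.
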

    \begin{proof}
        Combine Lemmas \ref{lem: DiscriminantField} and \ref{lem: Sq} and note that $\cP \mid \Delta_{\bX}R$ if and only if $\Delta_{\bX}$ vanishes in $\bbF_{\cP}$.   
    \end{proof}
    \subsection{Krylov spaces --- Proof of Theorems \ref{thm: LevelBound} and \ref{thm: ColumnRestriction}} 
    Note that it holds for every $w_1,\ldots,w_m\in R^n$ and $i\leq m$ that $\sK_{\bX}(w_i) \subseteq \sK_{\bX}(w_1,\ldots,w_m)$.
    Hence, considering that $\Gamma$-contractivity and isotropy are inherited by subspaces, it suffices to prove Theorem \ref{thm: ColumnRestriction} since the latter implies Theorem \ref{thm: LevelBound}.   
    \begin{proof}[Proof of \texorpdfstring{Theorem \ref{thm: ColumnRestriction}}{Theorem}]
        By definition, Dedekind domains are Noetherian.
        Hence, every submodule of $R^n$ is finitely generated.  
        Let $w_1,\ldots,w_m \in \operatorname{Im}_R(\cL \bQ)$ be a set of generators for $\operatorname{Im}_R(\cL \bQ) \subseteq R^n$. 

        That the $w_i$ generate $\operatorname{Im}_R(\cL \bQ)$ implies that $\operatorname{Im}_R(\cL \bQ) \subseteq \sK_{\bX}(w_1,\ldots,w_m)$. 
        On the other hand, recall from Lemma \ref{lem: Restrict} that the action of $\bX$ restricts to $\operatorname{Im}_R(\cL \bQ)$. 
        Thus, recalling  Definition \ref{def: Krylov}, it follows from $w_1,\ldots,w_m \in \operatorname{Im}_R(\cL\bQ)$ that $\sK_{\bX}(w_1,\ldots,w_m) \subseteq \operatorname{Im}_R(\cL \bQ)$. 
        This proves the equality \eqref{eq: Im_Kw1wm}. 

        It remains to prove that $\cP^e \mid \cL$ implies that $\operatorname{Im}_R(\cL \bQ)$ is $\Gamma$-contractive up to level $\cP^e$ and isotropic up to level $\cP^{2e}$. 
        Isotropy is immediate from Lemma \ref{lem: IsotropyImQ}, so only contractivity remains. 
        Pick some arbitrary $v \in \operatorname{Im}_R(\cL \bQ)$ with $v \equiv 0 \bmod \cP^k$ for some $k\leq e-1$.
        Then, recalling the definition of contractivity from Definition \ref{def: Contractive}, it has to be shown that $\Gamma(\bX) v \equiv 0 \bmod \cP^{k+1}$.

        Recall that any nontrivial quotient of a Dedekind domain is a principal ideal ring \cite[p.278]{zariski2013commutative}. 
        Let $r \in \cP^k$ be an element that reduces to a generator of the ideal $\cP^k/\cP^{k+1}$ in $R/\cP^{k+1}$.
        Then, every vector in $(\cP^k)^n$ is congruent to a multiple of $r$ modulo $\cP^{k+1}$. 
        In particular, $v \equiv r w \bmod \cP^{k+1}  $ for this $r\in \cP^k$ and $w\in R^n$. 
        
        Note that $w \bmod \cP$ is an element of $\sQ_k$, by definition of $\sQ_k$.  
        Here, by Lemma \ref{lem: Sq}, the characteristic polynomial $\psi_k$ of $\bX$ on $\sQ_k$ satisfies $\psi_k^2 \mid \phi_{\bX}$ over $\bbF_{\cP}$.
        By the definition \Cref{eq: Def_Gamma} of $\Gamma$ we hence have $\psi_k\mid \Gamma$. 
        Using that $\psi_k(\bX) w \equiv 0 \bmod \cP$ by the Cayley--Hamilton theorem, it follows that $\Gamma(\bX) w \equiv 0 \bmod \cP$. 
        Thus, using that $v \equiv rw \bmod \cP^{k+1}$ with $r\in \cP^k$, we have $\Gamma (\bX) v \equiv 0 \bmod \cP^{k+1}$, as desired.         
    \end{proof}

    \subsection{Divisibility by \texorpdfstring{$\cP^2$}{P2} --- Proof of \texorpdfstring{Theorem \ref{thm: MainPlevelP2disc}}{Theorem}}\label{sec: ProofMainP2disc}
    
    We exploit the representation $\Delta_{\bX} = \det(\phi_{\bX}'(\bX))$ and that the determinant of a matrix vanishes modulo $\cP^k$ if and only if the matrix is sufficiently degenerate, in the following sense: 
    \begin{lemma}\label{lem: Smith}
        Consider a matrix $\bM \in R^{n\times n}$ and an integer $k\geq 1$. 
        Then, $\det(\bM)\equiv 0 \bmod \cP^{k}$ if and only if there exist vectors $v_1,\ldots,v_i \in R^{n}$ whose reductions modulo $\cP$ are linearly independent over $\bbF_{\cP}$ and integers $\lambda_1,\ldots,\lambda_i \geq 1$ such that
        \begin{equation}
            \textstyle\sum_{j=1}^i \lambda_j \geq k \quad  \textnormal{and}\quad  \bM v_j \equiv 0 \bmod \cP^{\lambda_j}\textnormal{ for all }j\leq i.\label{eq:KindMusic}
        \end{equation}  
    \end{lemma}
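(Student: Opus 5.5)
The plan is to localize at $\cP$ and reduce to Smith normal form over a discrete valuation ring. Let $R_{\cP}$ be the localization of $R$ at $\cP$. Since $R$ is Dedekind, $R_{\cP}$ is a DVR; fix a uniformizer $\pi\in\cP$. Whether $\det(\bM)$ lies in $\cP^k$, and whether $\bM v\equiv 0 \bmod \cP^{\lambda}$, only depends on $\cP$-adic valuations, so everything may be checked in $R_{\cP}$. Two standard facts justify this. First, $R/\cP^{\lambda}\cong R_{\cP}/\pi^{\lambda}R_{\cP}$. Second, every vector of $R_{\cP}^n$ is, modulo any fixed power $\pi^{N}R_{\cP}^n$, congruent to $s^{-1}$ times a vector of $R^n$ with $s\notin\cP$. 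Multiplying by the unit $s$ changes neither the reductions modulo $\cP$ (up to a nonzero scalar) nor the valuations.

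Over $R_{\cP}$ we write the Smith normal form $\bM=\bU\operatorname{diag}(\pi^{a_1},\ldots,\pi^{a_n})\bV$ with $\bU,\bV\in\mathrm{GL}_n(R_{\cP})$ and $a_j\in\bbZ_{\geq0}\cup\{\infty\}$, where $\pi^{\infty}=0$. Then the $\cP$-valuation of $\det\bM$ is $\sum_j a_j$.

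\emph{Direction ``if''.} Complete $v_1,\ldots,v_i$ to a set $v_1,\ldots,v_n$ whose reductions modulo $\cP$ form a basis of $\bbF_{\cP}^n$. The matrix $\bB=[v_1|\cdots|v_n]$ then has unit determinant in $R_{\cP}$. The $j$-th column of $\bM\bB$ is divisible by $\pi^{\lambda_j}$ for $j\le i$. By multilinearity of the determinant in the columns,
\begin{equation}
v_{\cP}(\det\bM)=v_{\cP}(\det(\bM\bB))\geq \textstyle\sum_{j\le i}\lambda_j\geq k.
\end{equation}
Hence $\det\bM\in\cP^kR_{\cP}\cap R=\cP^k$.

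\emph{Direction ``only if''.} Suppose $\sum_j a_j\geq k$. Take the columns $\bV^{-1}e_j$ for the indices $j$ with $a_j\geq1$. Their reductions modulo $\cP$ are independent because $\bV^{-1}$ is invertible modulo $\cP$, and they satisfy $\bM\bV^{-1}e_j=\pi^{a_j}\bU e_j$. Set $\lambda_j=a_j$, capping any infinite $a_j$ at $k$. Then $\sum_j\lambda_j\geq k$. If every $a_j=0$, then $\det\bM$ is a unit and $k\geq1$ fails, so there is nothing to prove. Finally, transfer these vectors back to $R^n$ with the approximation fact: approximate modulo $\pi^{k}$ and clear the denominator by a unit $s\notin\cP$. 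This keeps the congruences modulo $\cP^{\lambda_j}$, since $\lambda_j\le k$, and keeps independence modulo $\cP$.

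The main obstacle is the bookkeeping in this transfer between $R$ and $R_{\cP}$. It needs $\cP^{\lambda}R_{\cP}\cap R=\cP^{\lambda}$ and the density of $R$ in $R_{\cP}$ modulo $\pi^{N}$. Both follow from $\cP$ being maximal, so that elements outside $\cP$ are units modulo $\cP^{N}$; Proposition \ref{prop: IdealArithmetic} can be cited for the needed ideal arithmetic. The existence of Smith normal form over a PID is classical and will be cited. As an alternative that avoids localization, one could work directly in the principal ideal ring $R/\cP^{k}$, which the paper already notes is principal (it is a quotient of a Dedekind domain), and use its Smith form.
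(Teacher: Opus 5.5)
Your argument is correct apart from one small slip, and it takes a somewhat different route from the paper.

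\textbf{Comparison with the paper.} The paper never leaves $R/\cP^k$. It uses that this quotient is a principal ideal ring and takes a Smith normal form $\bM\equiv\bU\bD\bV \bmod \cP^k$ with $\bU,\bV\in\operatorname{GL}_n(R/\cP^k)$, citing the theory for principal ideal rings (not domains here). It then notes that both conditions are invariant under this action and reads off the diagonal case with standard basis vectors. This is exactly the alternative you mention in your last sentence, and the paper leaves the converse as ``similar''.

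You instead localize to the discrete valuation ring $R_{\cP}$, so you only need the textbook Smith form over a principal ideal domain. Your ``if'' direction avoids Smith form entirely: you complete $v_1,\ldots,v_i$ to a basis modulo $\cP$ and use multilinearity of the determinant. This is self-contained and cleaner than the paper's ``follows similarly''. The price is the transfer between $R$ and $R_{\cP}$, which the paper's route does not need, since $R/\cP^k\cong R_{\cP}/\pi^kR_{\cP}$ already carries everything relevant.

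\textbf{The slip.} You cap only the infinite $a_j$ at $k$, and then invoke $\lambda_j\le k$ to justify approximating modulo $\pi^k$. A finite $a_j$ can exceed $k$, and then a congruence modulo $\pi^k$ does not give $\bM v_j\equiv 0 \bmod \cP^{a_j}$. There are two easy fixes.
\begin{itemize}
\item Set $\lambda_j=\min(a_j,k)$ for every $j$. The sum stays $\geq k$: if some $a_j\geq k$ that term alone suffices, and otherwise nothing changed. The paper's $\operatorname{GL}_n(R/\cP^k)$-invariance tacitly needs the same cap.
\item Drop the approximation altogether. Every $x\in R_{\cP}^n$ is \emph{exactly} $s^{-1}y$ with $y\in R^n$ and $s\in R\setminus\cP$. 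Hence $\bM y=s\bM x\in \pi^{\lambda}R_{\cP}^n\cap R^n=(\cP^{\lambda})^n$ for every $\lambda$. The reduction of $y$ modulo $\cP$ is a nonzero multiple of that of $x$, so linear independence over $\bbF_{\cP}$ is preserved.
\end{itemize}
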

    \begin{proof}
        Every nontrivial quotient of a Dedekind domain is a principal ideal ring \cite[p.278]{zariski2013commutative}. 
        In particular, matrices over $R/\cP^{k}$ hence admit a Smith normal form \cite[Theorem 15.24]{brown1993matrices}. 
        That is, there exist $d_1,\ldots,d_n \in R$ and invertible $\bV,\bU \in \operatorname{GL}_n(R/\cP^k)$  such that $\bM \bmod \cP^k$ can be reduced to diagonal form: 
        \begin{equation}
            \bM \equiv \bU \bD \bV \bmod \cP^{k} \quad \textnormal{ with }\quad \bD = \operatorname{diag}(d_1,\ldots,d_n). \label{eq:HappySet} 
        \end{equation}
        Multiplication by $\operatorname{GL}_n(R/\cP^k)$ preserves both the event where $\det(\bM) \equiv 0 \bmod \cP^k$ and the event where there exists a system satisfying \eqref{eq:KindMusic}. 
        Consequently, it suffices to prove Lemma \ref{lem: Smith} when $\bM$ is replaced by the diagonal matrix $\bD$.  

        The case with diagonal matrices is elementary. 
        Indeed, for every $j$, let $\lambda_j\geq 0$ be the power of $\cP$ in the factorization of the ideal $d_j R$. 
        Then, the arithmetic for inclusion and product of ideals in Proposition \ref{prop: IdealArithmetic} yields that $\det(\bD) \equiv 0 \bmod \cP^k$ if and only if $\sum_{j=1}^n \lambda_j\geq k$. 
        Let $e_1,\ldots,e_n \in R^n$ be the standard basis vectors. 
        Taking $v_j = e_{r_j}$ with $r_j$ the $j$th index with $\lambda_{r_j}\geq 1$ then proves that \eqref{eq:KindMusic} follows from $\det(\bD) \equiv 0 \bmod \cP^k$. 
        The other direction follows similarly. 
    \end{proof}

   \begin{lemma}\label{lem: LiftSolutionSquared}
        Assume that $\cP^e \mid \cL$. 
        Consider some polynomial $f\in R[x]$, not necessarily monic, such that 
        \begin{equation}
            \ker_{\cP}\bigl(f(\bX)\bigr) \subseteq   \sQ_e. \label{eq:OilyHotel}
        \end{equation}
        Then, it holds for every $v\in \sQ_{e-1}$ with $f(\bX) v \equiv 0 \bmod \cP$ that there exists $w \in R^n$ with $w \equiv v \bmod \cP$ and $f(\bX) w \equiv 0 \bmod \cP^2$.  
    \end{lemma}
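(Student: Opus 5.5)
The plan is a Hensel-type correction argument. Fix $v\in\sQ_{e-1}$ with $f(\bX)v\equiv 0\bmod \cP$ and choose any lift $w_0\in R^n$ of $v$. Since $\cP/\cP^2$ is principal in $R/\cP^2$ (every nontrivial quotient of a Dedekind domain is a principal ideal ring), fix $\pi\in\cP$ generating $\cP/\cP^2$. Then $f(\bX)w_0\equiv \pi u\bmod\cP^2$ for some $u\in R^n$. We look for $w=w_0+\pi z$, for which
\begin{equation*}
  f(\bX)w\equiv \pi\bigl(u+f(\bX)z\bigr)\bmod \cP^2 .
\end{equation*}
So it suffices to show that $u\bmod\cP$ lies in $\Im_{\cP}(f(\bX))$. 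Because $\bX$ is symmetric, $f(\bX)$ is symmetric, and hence $\Im_{\cP}(f(\bX))=\ker_{\cP}(f(\bX))^{\perp}$ by linear algebra over $\bbF_\cP$. By the hypothesis \eqref{eq:OilyHotel}, it is therefore enough to prove
\begin{equation*}
  u\in \sQ_e^{\perp}, \quad\text{equivalently}\quad y^{\T}f(\bX)w_0\equiv 0\bmod\cP^2 \text{ for all lifts } y \text{ of elements of } \sQ_e .
\end{equation*}

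To verify this orthogonality I would proceed as follows.
\begin{enumerate}
\item Use the definitions of the $\sQ_k$, choosing $w_0$ and $y$ well. Take $w_0$ such that $rw_0\in\operatorname{Im}_R(\cL\bQ)+(\cP^{e})^n$ for $r\in\cP^{e-1}$. Take $y$ such that $sy\in\operatorname{Im}_R(\cL\bQ)+(\cP^{e+1})^n$ for $s\in\cP^{e}$. Choose $r$ and $s$ generating $\cP^{e-1}/\cP^{e}$ and $\cP^{e}/\cP^{e+1}$.
\item Write $sy=a+p_1$ and $rw_0=b+p_2$ with $a,b\in\operatorname{Im}_R(\cL\bQ)$, $p_1\in(\cP^{e+1})^n$ and $p_2\in(\cP^e)^n$.
\item By Lemma~\ref{lem: Restrict}, $f(\bX)b\in\operatorname{Im}_R(\cL\bQ)$. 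Moreover $f(\bX)b\equiv f(\bX)rw_0-f(\bX)p_2\equiv 0\bmod\cP^e$, using $f(\bX)w_0\equiv0\bmod\cP$.
\item Expand $sr\,y^{\T}f(\bX)w_0$ into four terms. The goal is to show that each term lies in $\cP^{2e+1}$. Since $sr$ generates $\cP^{2e-1}$ modulo $\cP^{2e}$, Proposition~\ref{prop: IdealArithmetic} then gives $y^{\T}f(\bX)w_0\in\cP^2$.
\end{enumerate}

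The cross terms are handled by counting valuations. We have $p_1^{\T}f(\bX)b\in\cP^{e+1}\cP^{e}$. Likewise $a^{\T}f(\bX)p_2$ and $p_1^{\T}f(\bX)p_2$ gain an extra factor of $\cP$, using $a\equiv sy\equiv0\bmod\cP^e$ together with $f(\bX)$ applied to $p_2$.

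The main obstacle is the term $a^{\T}f(\bX)b$, which pairs two elements of $\operatorname{Im}_R(\cL\bQ)\cap(\cP^e)^n$. Lemma~\ref{lem: IsotropyImQ} only gives membership in $\cL^2\subseteq\cP^{2e}$, which is one power short.

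To gain the missing factor I would try two things. First, write $a=\bQ x$ and $f(\bX)b=\bQ x'$ with $x,x'\in\cL R^n$. Then use $a^{\T}f(\bX)b=x^{\T}(\bQ^{\T}\bQ)x'$ and exploit that $f(\bX)b\equiv0\bmod\cP^e$ forces extra divisibility of $x'$. Second, failing that, exploit the freedom in the choice of the lift $w_0$ (equivalently, of $b$) to absorb the defect. If this still falls short, I would re-examine whether a shifted choice of indices (pairing $\sQ_{e-1}$ against $\sQ_e$ with $r$ and $s$ swapped in role) yields the exact exponent $2e+1$.
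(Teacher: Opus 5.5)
Your Hensel reduction is correct and matches how the paper finishes. Writing $f(\bX)w_0\equiv \pi u \bmod \cP^2$ and $\bar u$ for $u \bmod \cP$, a lift exists if and only if $\bar u\in\operatorname{Im}_{\cP}(f(\bX))=\ker_{\cP}(f(\bX))^{\perp}$.

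The gap is the step you flagged, the term $a^{\T}f(\bX)b$. None of your three fixes can close it, because the target $\bar u\in\sQ_e^{\perp}$ is false in general. So, in fact, is the lemma as printed, with $\sQ_e$ in \eqref{eq:OilyHotel}. Take $R=\bbZ$, $\cP=5\bbZ$, $e=1$, $f(x)=x$ and
\[
\bX=\begin{pmatrix} 9&12&10\\ 12&16&0\\ 10&0&0\end{pmatrix},\qquad
\bQ=\frac15\begin{pmatrix} 3&-1&0\\ 4&7&0\\ 0&0&5\end{pmatrix}.
\]
Then $\bQ^{\T}\bQ=\begin{pmatrix}1&1&0\\1&2&0\\0&0&1\end{pmatrix}$, and $\bX\bQ=\bQ\bY$ with $\bY=\begin{pmatrix}25&25&14\\0&0&-8\\6&-2&0\end{pmatrix}$, so \eqref{eq:CalmEel} holds with $\cL=5\bbZ$.

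The module $\operatorname{Im}_{\bbZ}(\cL\bQ)$ is spanned by $(3,4,0)^{\T}$, $(-1,7,0)^{\T}$ and $(0,0,5)^{\T}$. Hence $\sQ_0=\operatorname{span}\{(3,4,0)^{\T}\}$ and $\sQ_1=\operatorname{span}\{(3,4,0)^{\T},(0,0,1)^{\T}\}=\ker_{\cP}(\bX)$, so \eqref{eq:OilyHotel} holds. For $v=(3,4,0)^{\T}\in\sQ_0$ we have $\bX v=(75,100,30)^{\T}\equiv 0\bmod 5$. Yet every $w\equiv v\bmod 5$ has $(\bX w)_3=10w_1\equiv 5\bmod 25$, so no lift exists.

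In your notation ($r=1$, $s=5$, $y=(0,0,1)^{\T}$), the bad term is $a^{\T}f(\bX)b=(0,0,5)\,\bX\,(3,4,0)^{\T}=150\notin 125\bbZ$. Structurally, $\bar u$ lies in $\sQ_e$, so $\bar u\perp\sQ_e$ would need $\sQ_e$ to be isotropic. Lemma~\ref{lem: IsotropyQk} gives that only when $\cP^{e+1}\mid\cL$, and here $\sQ_1$ is not isotropic.

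What is true, and what the paper's argument actually proves, is the version with $\sQ_{e-1}$ in \eqref{eq:OilyHotel}. The missing ideas are to pair against $\sQ_{e-1}$, and to use the freedom in the lift up front rather than as a fallback.

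First, since $\pi^{e}$ generates $\cP^{e}/\cP^{2e}$, replace $v$ by $v-\pi\varepsilon$ so that $\pi^{e-1}v\in\operatorname{Im}_R(\cL\bQ)+(\cP^{2e})^n$. Lemma~\ref{lem: Restrict} then gives $\pi^{e}u\in\operatorname{Im}_R(\cL\bQ)+(\cP^{2e})^n$. Pairing with $\pi^{e-1}\tilde q\in\operatorname{Im}_R(\cL\bQ)+(\cP^{e})^n$ via Lemma~\ref{lem: IsotropyImQ} gives $\pi^{2e-1}\tilde q^{\T}u\in\cP^{2e}$, that is, $\bar u\perp\sQ_{e-1}$. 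The exponents balance exactly because one side carries $\pi^{e-1}$ rather than $\pi^{e}$.

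This closes the argument when $\ker_{\cP}(f(\bX))\subseteq\sQ_{e-1}$. That is the inclusion the paper's proof tacitly uses in its final step, and it is what Corollary~\ref{cor: HigherP} supplies: there $\ker_{\cP}(\phi_{\bX}'(\bX))=\sQ_k$ and $e=k+1$.

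Note also that, without the lift adjustment, your cross term $a^{\T}f(\bX)p_2$ is not in $\cP^{2e+1}$ either. With $p_2$ only in $(\cP^{e})^n$, applying $f(\bX)$ gains no extra factor of $\cP$, so that term also lies only in $\cP^{2e}$.
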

    \begin{proof} 
       Recall that nontrivial quotients of a Dedekind domain are principal ideal rings \cite[p.278]{zariski2013commutative}.
       Let $\pi \in \cP\setminus \cP^2$. 
       Then, $\pi$ is a generator for the ideal $\cP/\cP^{2e}$ in $R/\cP^{2e}$.
       
       By definition of $\sQ_{e-1}$, every vector in $ \sQ_{e-1} \subseteq \bbF_{\cP}^n$ is the reduction of some $v\in R^n$ with $\pi^{e-1} v  \in \operatorname{Im}_{R}(\cL \bQ) + (\cP^{e})^{n}$. 
       In fact, using that $\cP^{e}/\cP^{2e}$ is generated by $\pi^{e}$, we may assume without loss of generality that $\pi^{e-1} v \in     \operatorname{Im}_{R}(\cL \bQ) + (\cP^{2e})^n$ by replacing $v$ with $v-\pi \varepsilon$ for some suitable error term $\varepsilon \in R^n$.

       Note that $f(\bX) v \equiv 0 \bmod \cP$ implies that there exists $y\in R^n$ with 
       \begin{equation}
            f(\bX)  v \equiv  \pi y \bmod \cP^{2e}. \label{eq:QuickBeetle}
       \end{equation}
       By Lemma \ref{lem: Restrict}, we have that $f(\bX) (\pi^{e-1} v)  \in  \operatorname{Im}_R(\cL \bQ) + (\cP^{2e})^n$. 
       Hence, $\pi^{e}y \in    \operatorname{Im}_R(\cL \bQ) + (\cP^{2e})^n$.  
       In particular, Lemma \ref{lem: IsotropyImQ} hence implies that   
       \begin{equation}
         \pi^{e} (q^{\T}y)  \equiv 0 \bmod \cP^{2e} \  \textnormal{ for every }\  q \in\operatorname{Im}_{R}(\cL \bQ) + (\cP^{e})^n. \label{eq:SillyCamel} 
       \end{equation}
        By definition of $\pi$ and the ideal arithmetic in Proposition \ref{prop: IdealArithmetic}, it holds that $\pi^{e} r \in \cP^{2e}$ for $r\in R$ if and only if $r\in \cP^{e}$.
        Using this twice in \eqref{eq:SillyCamel} yields that $\tilde{q}^{\T}y \equiv 0 \bmod \cP$ for every $\tilde{q}\in R^n$ with $\pi^{e-1} \tilde{q} \in \operatorname{Im}_R(\cL\bQ) + (\cP^{e})^n$.   
        Hence, by definition of $\sQ_{e-1}$, 
        \begin{equation}
            \tilde{q}^{\T}y \equiv 0 \bmod \cP\ \textnormal{ for every }\ \tilde{q}\in \sQ_{e-1}.
        \end{equation}
        In particular, using the assumption \eqref{eq:OilyHotel}, we may conclude that 
        \begin{equation}
             y \in \ker_{\cP}(f(\bX))^{\perp}.    \label{eq:WeavingClaw}
        \end{equation}
        
        Using that $\bX$ is symmetric, we have that $
            \operatorname{ker}_{\cP}(f(\bX))^{\perp }= \operatorname{Im}_{\cP}(f(\bX))$. 
        It hence follows from \eqref{eq:WeavingClaw} that there exists $\gamma \in R^n$ with $y \equiv f(\bX) \gamma \bmod \cP$. 
        Let $w \de v - \pi \gamma$ and note that we then have $f(\bX) w \equiv 0 \bmod \cP^2$ due to \eqref{eq:QuickBeetle}.           
    \end{proof}
    
    \begin{corollary}\label{cor: HigherP}
        Assume that $\cP^e\mid \cL$. 
        Then,  $ \cP^{\dim_{\cP}(\sQ_k) +1} \mid \Delta_{\bX} R$ for every $k\leq e-1$. 
    \end{corollary}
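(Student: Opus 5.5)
Fix $k\leq e-1$ and put $d\de\dim_{\cP}(\sQ_k)$. I would apply Lemma \ref{lem: Smith} to $\bM\de\phi_{\bX}'(\bX)$, whose determinant is $\Delta_{\bX}$. The aim is to find vectors $v_1,\ldots,v_i\in R^n$, linearly independent modulo $\cP$, together with exponents $\lambda_j$ such that $\sum_j\lambda_j\geq d+1$ and $\bM v_j\equiv 0\bmod \cP^{\lambda_j}$. The plan is to get a contribution of $2$ from one vector in $\sQ_k$ and a contribution of $1$ from each remaining basis vector of $\sQ_k$.

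Step one is to show that every vector of $\sQ_k$ is killed by $\phi_{\bX}'(\bX)$ modulo $\cP$. By Lemma \ref{lem: Sq}, $\psi_k^2\mid\phi_{\bX}$ over $\bbF_{\cP}$, so write $\phi_{\bX}=\psi_k^2 h$. Then $\phi_{\bX}'=\psi_k(2\psi_k'h+\psi_k h')$ is divisible by $\psi_k$. By Cayley--Hamilton, $\psi_k(\bX)$ annihilates $\sQ_k$, hence so does $\phi_{\bX}'(\bX)$. Lifting a basis of $\sQ_k$ to $R^n$ therefore gives $d$ vectors with $\lambda_j=1$, which already yields $\cP^{d}\mid\Delta_{\bX}R$.

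Step two is to raise one exponent to $2$ using Lemma \ref{lem: LiftSolutionSquared}. For this I need $\ker_{\cP}(f(\bX))\subseteq\sQ_{e'}$ for a suitable polynomial $f$ and level $e'$. The lemma only uses $\cP^{e'}\mid\cL$, so I may apply it with $e'=k+1\leq e$, which gives $v\in\sQ_k$. I would not take $f=\phi_{\bX}'$ directly, since its kernel can be larger than $\sQ_{k+1}$. Instead I would take $f=\psi_k$, or better $f$ equal to the polynomial $\psi_{k+1}$ lifted to $R[x]$, and try to show $\ker_{\cP}(\psi_{k+1}(\bX))\subseteq\sQ_{k+1}$.

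That inclusion is the main obstacle, because the kernel of $\psi_{k+1}(\bX)$ may contain primary components outside $\sQ_{k+1}$. If it fails, I would change $f$ to cut out only the relevant primary part. Concretely, choose an irreducible factor $g$ of $\psi_k$ and an appropriate power of $g$, and compare dimensions using Lemma \ref{lem: Psi}. Alternatively, use the nested chain $\sQ_0\subseteq\sQ_1\subseteq\cdots$, which stabilizes in the finite-dimensional space $\bbF_{\cP}^n$. At a stable index the restriction of $\bX$ is closed under taking kernels, and I would then relate that index back to $k$.

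Once some $v\in\sQ_k$ has a lift $w$ with $f(\bX)w\equiv0\bmod\cP^2$, the finish is as follows. Because $\psi_k\mid f$ up to a factor that is harmless modulo $\cP$, one gets $\phi_{\bX}'=f\cdot u+\cP$-corrections. More precisely, I would write $\phi_{\bX}'(\bX)w=u(\bX)f(\bX)w+(\text{terms in }\cP)\cdot f$-multiples. Showing that these correction terms also vanish to second order is a point needing care, and I would handle it by choosing $f$ as an exact divisor of $\phi_{\bX}'$ in $R[x]$, lifting via Hensel-type factorization if needed. This gives $\phi_{\bX}'(\bX)w\equiv0\bmod\cP^2$.

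Finally, extend $w$ by lifts of the other $d-1$ basis vectors of $\sQ_k$, each with $\lambda=1$. The exponents then sum to $d+1$, and Lemma \ref{lem: Smith} gives $\cP^{d+1}\mid\Delta_{\bX}R$.
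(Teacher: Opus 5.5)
\textbf{There is a genuine gap in your Step two.} Step one is correct and matches the paper. Step two, however, depends on finding a polynomial $f$ with two properties: $\ker_{\cP}(f(\bX))\subseteq\sQ_{k+1}$, and $f(\bX)v\equiv0\bmod\cP$ for some nonzero $v\in\sQ_k$. Such an $f$ need not exist.

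A kernel $\ker_{\cP}(f(\bX))$ always contains whole eigenspaces. By Lemma~\ref{lem: Psi}, however, $\sQ_{k+1}$ is typically only a small isotropic piece of a primary component. For instance, suppose $\sQ_k=\sQ_{k+1}$ is the line spanned by $v$, and $v$ lies in a $2$-dimensional eigenspace $\ker_{\cP}(\bX-a)$ of $\bX\bmod\cP$. Nothing in your argument excludes this. Then any $f$ with $f(\bX)v\equiv0\bmod\cP$ has $f(a)=0$ in $\bbF_{\cP}$, so it kills the whole eigenspace, and $\ker_{\cP}(f(\bX))\not\subseteq\sQ_{k+1}$. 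Restricting to primary parts, choosing powers of $g$, or passing to a stable index of the chain does not repair this.

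Your finishing step has a second gap. For $f=\psi_k$ or $f=\psi_{k+1}$ you only know $f\mid\phi_{\bX}'$ modulo $\cP$. That does not turn $f(\bX)w\equiv0\bmod\cP^2$ into $\phi_{\bX}'(\bX)w\equiv0\bmod\cP^2$. Hensel lifting only yields factors of $\phi_{\bX}'$ whose reductions contain entire coprime primary pieces, so their kernels are again too large.

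\textbf{The missing idea is a dichotomy.} The configuration that defeats your search is exactly the one where no lifting is needed. Your Step one gives $\sQ_k\subseteq\ker_{\cP}(\phi_{\bX}'(\bX))$.
\begin{itemize}
\item If this inclusion is strict, then $\dim_{\cP}\ker_{\cP}(\phi_{\bX}'(\bX))\geq d+1$. Lemma~\ref{lem: Smith} applied to a basis of that kernel, with all $\lambda_j=1$, already gives $\cP^{d+1}\mid\Delta_{\bX}R$.
\item If it is an equality, then $\ker_{\cP}(\phi_{\bX}'(\bX))=\sQ_k\subseteq\sQ_{k+1}$. Hence hypothesis~\eqref{eq:OilyHotel} of Lemma~\ref{lem: LiftSolutionSquared} holds for $f=\phi_{\bX}'$ itself at level $k+1$, using $\cP^{k+1}\mid\cL$. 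Every basis vector of $\sQ_k$ then lifts to a solution of $\phi_{\bX}'(\bX)w\equiv0\bmod\cP^2$. This gives $\cP^{2d}\mid\Delta_{\bX}R$, and $2d\geq d+1$ because $d\geq1$ by Lemma~\ref{lem: NonEmpty}.
\end{itemize}
Taking $f=\phi_{\bX}'$ exactly also removes the correction-term problem. This is the paper's argument, phrased there by comparing $\ker_{\cP}(\psi_k(\bX))$ with $\ker_{\cP}(\phi_{\bX}'(\bX))$. You rejected $f=\phi_{\bX}'$ because its kernel can exceed $\sQ_{k+1}$, but that is precisely the case already settled by counting dimensions.
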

    \begin{proof} 
        Recall that $\psi_{k}$ denotes the characteristic polynomial of the action of $\bX$ on $\sQ_k$. 
        Recall from Lemma \ref{lem: Sq} that  $\psi_{k}^2\mid \phi_{\bX}$. 
        Consequently, $\psi_k \mid \phi_{\bX}'$ and hence  
           \begin{equation}
           \operatorname{ker}_{\cP}\bigl( \psi_k(\bX)\bigr) \subseteq \ker_{\cP}\bigl(\phi_{\bX}'(\bX)\bigr). \label{eq:GoodMusic} 
           \end{equation}
            Note that it follows from $\psi_k$ being the characteristic polynomial for $\sQ_k$ that $\ker_{\cP}(\psi_k(\bX) )$ has dimension no less than $\dim_{\cP}(\sQ_k)$. 

            If the inclusion \eqref{eq:GoodMusic} is strict or if $\ker_{\cP}(\psi_k(\bX) )$ has dimension strictly greater than $\dim_{\cP}(\sQ_k)$, then it follows that $\ker_{\cP}(\phi_{\bX}'(\bX))$ has dimension greater than or equal to $\dim_{\cP}(\sQ_k) + 1$. 
           Hence, by applying Lemma \ref{lem: Smith} to a basis of $\operatorname{ker}_{\cP}(\phi_{\bX}'(\bX))$, we indeed have $ \cP^{ \dim_{\cP}(\sQ_k) +1} \mid \Delta_{\bX} R $ in this case. 

            Now suppose that $ \operatorname{ker}_{\cP}( \psi_k(\bX)) =  \ker_{\cP}(\phi_{\bX}'(\bX))$ and that this vector space has dimension $\dim_{\cP}(\sQ_k)$.
            Then, considering that $\sQ_k \subseteq \operatorname{ker}_{\cP}( \psi_k(\bX))$, we have that $\operatorname{ker}_{\cP}( \phi_{\bX}'(\bX)) = \sQ_k$.   
            Using that $\cP^{k+1} \mid \cL$, Lemma \ref{lem: LiftSolutionSquared} now allows lifting a basis of $\operatorname{ker}_{\cP}(\phi_{\bX}'(\bX))$ to a system of independent solutions to $\phi_{\bX}'(\bX) w \equiv 0 \bmod \cP^2$. 
            Hence, $\cP^{2\dim_{\cP}(\sQ_k)} \mid \Delta_{\bX}R$ by Lemma \ref{lem: Smith}.  
            In particular, $\cP^{\dim_{\cP}(\sQ_k)+1} \mid \Delta_{\bX}R$. 
    \end{proof}
    \begin{proof}[Proof of Theorem \ref{thm: MainPlevelP2disc}]
        Immediate from Corollary \ref{cor: HigherP} with $e=1$. 
        (Recall from Lemma \ref{lem: NonEmpty} that $\sQ_0 \neq 0$.)  
    \end{proof}

    \section{Matrices over number fields --- Proofs for Section \ref{sec: AlgebraicNumber}}\label{sec: ProofsNumber}
    We here adopt the setting of Section \ref{sec: AlgebraicNumber}. 
    Thus, $K$ is a number field with ring of integers $\sO_K$, and we consider some symmetric $\bX \in \mathbb{Z}^{n\times n}$.  
    \subsection{Sufficient conditions --- Proof of Corollaries \ref{cor: Ramified1} and \ref{cor: Ramified2}}
    
    It suffices to establish Corollary \ref{cor: Ramified1}, since Corollary \ref{cor: Ramified2} is a direct consequence. 

    \begin{proof}[Proof of Corollary \ref{cor: Ramified1}]  
        We trivially have $p^2 \mid \Delta_{\bX}$ for every prime if $\Delta_{\bX} = 0$, so suppose that $\Delta_{\bX} \neq 0$.
        Let $\Delta_{\bX} = \pm \prod_{i=1}^m p_i^{h_i}$ be the prime factorization. 
    
        Consider a nonzero prime ideal $\cP \subseteq \sO_K$ with $\cP \mid \cL$.
        Then, $\cP^2 \mid \Delta_{\bX}R$ by Theorem \ref{thm: MainPlevelP2disc}. 
        The inclusion rules from Proposition \ref{prop: IdealArithmetic} imply that $\cP \mid p\sO_K$ for a prime $p$ if and only if $p\in \cP$. 
        Thus, there is a unique prime number with $\cP \mid p\sO_K$, this being the prime above which $\cP$ lies. 
        
        Using that $\cP^2 \mid \Delta_{\bX}\sO_K$ and the multiplication rules for ideal arithmetic in Proposition \ref{prop: IdealArithmetic} now yield $\cP^2 \mid (p_i \sO_K)^{h_i}$ for some  $p_i$ in the prime factorization. Then, $\cP^2 \mid p_i\sO_K$ or $h_i \geq 2$. 
    \end{proof}
    
    \subsection{Orthogonal matrices with a given level --- Proof of Proposition \ref{prop: TotallyRealSigned}}

    \begin{lemma}\label{lem: TotallyRealSigned}
        Assume that $K$ is totally real. 
        Then, every matrix $\bQ \in \sO_K^{n\times n}$ with $\bQ ^{\T}\bQ = \bI$ is a signed permutation. 
    \end{lemma}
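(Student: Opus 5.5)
Each column $q$ of $\bQ$ satisfies $\sum_{i} q_i^2 = 1$ with $q_i \in \sO_K$, and this single equation will force every column to be a signed standard basis vector. Total reality enters through the embeddings: each embedding $\sigma\colon K\to\bbR$ gives $\sum_i \sigma(q_i)^2 = 1$ as a sum of nonnegative reals, so $\lvert\sigma(q_i)\rvert\le 1$ for every $i$ and every $\sigma$.

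Fix a column and consider the entries $q_i$ with $q_i\neq 0$. Each such entry is a nonzero algebraic integer, so its norm $N_{K/\bbQ}(q_i)=\prod_\sigma \sigma(q_i)$ is a nonzero rational integer and $\prod_\sigma \lvert\sigma(q_i)\rvert \ge 1$. Since every factor is at most $1$, every factor equals $1$, so $\sigma(q_i)^2 = 1$ for all $\sigma$. This must hold for every nonzero entry of the column.

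In particular the identity embedding gives $q_i^2=1$ for each nonzero entry. Then $1=\sum_i q_i^2$ equals the number of nonzero entries, so each column has exactly one nonzero entry, and that entry is $\pm1$ because $q_i^2=1$ in the field $K$.

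Finally, $\bQ^{\T}\bQ=\bI$ means $\bQ$ is invertible, and the columns are pairwise orthogonal. Two columns supported on the same coordinate would have inner product $\pm1\neq 0$, so the supports are distinct and $\bQ$ is a signed permutation. The only real step is the norm argument, which requires $K$ totally real: otherwise a complex embedding has $\sigma(q_i)^2$ nonreal and the bound $\lvert\sigma(q_i)\rvert\le 1$ fails. Everything else is routine.
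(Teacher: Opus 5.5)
Your proof is correct and shares its core with the paper's. Total reality gives $\lvert\sigma(q_i)\rvert\le 1$ for every embedding $\sigma$, and since the norm of a nonzero $q_i\in\sO_K$ is a nonzero integer, this forces $\lvert\sigma(q_i)\rvert=1$ for all $\sigma$.

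You differ at the final step. The paper invokes Kronecker's theorem to deduce that $q_i$ is a root of unity, and only then uses $K\subseteq\bbR$ to get $q_i=\pm1$. You note instead that $\sigma(q_i)$ is already a real number of absolute value $1$, so $\sigma(q_i^2-1)=0$ and hence $q_i^2=1$ by injectivity of $\sigma$. This injectivity argument is the precise form of your appeal to the ``identity embedding'', which only makes literal sense if $K$ is given concretely as a subfield of $\bbR$.

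In the totally real setting this makes Kronecker's theorem superfluous, so your argument is strictly more elementary. The paper's route is the one that survives when conjugates can be non-real, e.g.\ for unitary matrices over a CM field such as $\bbQ(i)$. There the same norm argument still gives $\lvert\sigma(q_i)\rvert=1$, but one genuinely needs Kronecker to conclude that the entries are roots of unity.

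Two minor differences: you spell out the count $1=\sum_i q_i^2=\#\{i: q_i\neq 0\}$, which the paper leaves implicit. You also work with columns, read off directly from $\bQ^{\T}\bQ=\bI$, whereas the paper uses rows, which implicitly requires $\bQ\bQ^{\T}=\bI$.
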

    \begin{proof}
        Recall that the \emph{norm} of $k\in K$ is $\cN_{K/\bbQ}(k) \de \prod_{\sigma} \sigma(k)$ where the product runs over all embeddings $\sigma:K\to \bbC$. 
        It is classical that $\cN_{K/\bbQ}(r) \in \bbZ$ for every algebraic integer $r\in \sO_K$ \cite[Chapter 2]{marcus1977number}. 
    
        Every row $q = (q_1,\ldots,q_n)$ of $\bQ$ satisfies $\sum_{i=1}^n q_i^2 = 1$.         
        Hence, using that $K$ is totally real, we have $0 \leq \sigma(q_i)^2 \leq 1$ for every embedding $\sigma$. 
        Then, $\cN_{K/\bbQ}(q_i) \in \bbZ$ implies that $\lvert \sigma(q_i) \rvert=1$ if $q_i \neq 0$.   
        A theorem by Kronecker \cite{greiter1978simple} however states that the only algebraic integers $\alpha\in \sO_K$ with $\lvert \sigma(\alpha) \rvert = 1$ for all embeddings $\sigma:K\to \bbC$ are roots of unity. 
        Here, since $K \subseteq \bbR$, the only roots of unity are $\pm 1$. 
        
        We conclude that every row $q$ of $\bQ$ has exactly one nonzero entry $\pm 1$. 
        The orthogonality of the rows now yields that $\bQ$ is a signed permutation. 
    \end{proof}
    
    \begin{lemma}\label{lem: FinitelyMany}
        Assume that $K$ is totally real.
        Then, for every nonzero ideal $\cL \subseteq \sO_K$, there are finitely many matrices $\bQ \in K^{n\times n}$ with $\bQ^\T\bQ  = \bI$ and $\ell \bQ \in \sO_K^{n\times n}$ for every $\ell \in \cL$.   
    \end{lemma}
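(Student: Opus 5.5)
Fix a nonzero $\ell_0 \in \cL$; such an element exists since $\cL$ is a nonzero ideal. Every matrix $\bQ$ as in the statement then satisfies $\ell_0\bQ \in \sO_K^{n\times n}$, so its entries lie in the fractional ideal $\ell_0^{-1}\sO_K$. The plan is to combine this discreteness constraint with a boundedness constraint coming from orthogonality and total reality. Together they confine each entry $q_{ij}$ of $\bQ$ to a bounded subset of a lattice, and such a subset is finite.

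\emph{Boundedness.} Let $\sigma_1,\ldots,\sigma_d : K\to\bbR$ be the real embeddings, with $d=[K:\bbQ]$; since $K$ is totally real, every embedding lands in $\bbR$. Applying $\sigma_s$ entrywise to $\bQ^{\T}\bQ = \bI$ gives $\sigma_s(\bQ)^{\T}\sigma_s(\bQ) = \bI$ as real matrices. Hence each column of $\sigma_s(\bQ)$ is a real unit vector, so $\sum_i \sigma_s(q_{ij})^2 = 1$. Because each summand is a nonnegative real, this forces $\lvert\sigma_s(q_{ij})\rvert \le 1$ for all $i,j$ and all $s$. This is the same argument as in the proof of Lemma \ref{lem: TotallyRealSigned}, and total reality is exactly what makes the summands nonnegative.

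\emph{Discreteness.} Set $\alpha_{ij} \de \ell_0 q_{ij} \in \sO_K$. Then $\lvert\sigma_s(\alpha_{ij})\rvert \le \lvert\sigma_s(\ell_0)\rvert \eqqcolon C_s$ for every $s$. I will show that only finitely many $\alpha\in\sO_K$ satisfy $\lvert\sigma_s(\alpha)\rvert\le C_s$ for all $s$. There are two standard routes. The first is via the Minkowski embedding: $\sO_K$ maps under $(\sigma_1,\ldots,\sigma_d)$ onto a full-rank lattice in $\bbR^d$, and a lattice meets the bounded box $\prod_s[-C_s,C_s]$ in finitely many points. The second is more elementary: the characteristic polynomial $\prod_s (x-\sigma_s(\alpha))$ of $\alpha$ over $\bbQ$ has integer coefficients, and these coefficients are bounded in terms of the $C_s$. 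So there are finitely many possible such polynomials, each with finitely many roots, and hence finitely many possible $\alpha$. The second route avoids citing lattice theory, so I would use it. It follows that each entry $q_{ij} = \alpha_{ij}/\ell_0$ ranges over a finite set, and therefore so does $\bQ$.

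Neither step is a real obstacle. The only point requiring care is the finiteness of algebraic integers with bounded conjugates, and the integer-polynomial argument settles it directly.
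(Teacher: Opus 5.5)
Your proof is correct and follows the paper's argument. Fix a nonzero element of $\cL$, then use total reality together with the unit-norm columns to bound every conjugate of the entries of $\ell\bQ$, and conclude that only finitely many algebraic integers lie in such a bounded box. The paper proves that last step with the Minkowski lattice (your first route); your preferred bounded-characteristic-polynomial argument is an equally valid, more elementary substitute.
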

    \begin{proof}
        Fix some arbitrary nonzero $\ell \in \cL$ and let $\bG = \ell \bQ$. 
        Then, every column $g = (g_1,\ldots,g_n)$ of $\bG$ satisfies $\sum_{i=1}^n g_i^2 = \ell^2$. 
        Hence, using that $K$ is totally real, there exists some $C>0$ depending only on $\ell$ with $\lvert \sigma(g_i) \rvert \leq C$ for every $g_i$.

        Recall that the \emph{Minkowski embedding} $j:K\to \bbR^r$ of $K$ into Euclidean space sends every $k\in K$ to the vector $(\sigma(k))_{\sigma}$ \cite[C1,\S 5]{neukirch1999algebraic}. 
        Further, recall that this embedding sends $\sO_K$ into a lattice \cite[Proposition 5.2]{neukirch1999algebraic}. 
        In particular, $j(\sO_K)$ has a finite intersection with the compact box $\prod_{\sigma}[-C,C]$.
        This proves that there are only finitely many options for the entries of $\bG$, and hence also for $\bQ = \bG/\ell$.       
    \end{proof}
    
    \begin{proof}[Proof of Proposition \ref{prop: TotallyRealSigned}]
        Combine Lemmas \ref{lem: TotallyRealSigned} and \ref{lem: FinitelyMany}.
    \end{proof}
    
    \section{Algorithmic search for cospectral mates}\label{sec: Algorithm}
    Throughout this section, we assume that $\bX\in \bbZ^{n\times n}$ is a symmetric matrix over the (rational) integers and that $K$ is a totally real algebraic number field. 
    We think of $\bX$ as being potentially high-dimensional while $K$ is small and fixed. 
    \subsection{Algorithm outline}\label{sec: AlgorithmDescription}
    The main steps of the method are as follows; 
    see also Algorithm \ref{alg:orth_matrices} for pseudocode. 
    Details are given in the following Section \ref{sec: DetailedAlgDescription}. 
    \begin{description}[leftmargin = 1em]
        \item[Step 1. Determine potential ideal divisors] Compute the discriminant and all primes in the prime factorization $\Delta_{\bX} = \pm p_1^{h_1}p_2^{h_2}\cdots p_m^{h_m}$ for which $h_i \geq 2$ or that ramify in $K$ and have $h_i \geq 1$. 
        Subsequently, determine all prime ideals $\cP \subseteq \sO_K$ lying above the $p_i$ subject to the additional constraint that $\cP^2 \mid p_i \sO_K$ if $h_i =1$. 
        By Theorem \ref{thm: MainPlevelP2disc}, all prime ideals dividing the level $\cL \subseteq \sO_K$ of any orthogonal matrix $\bQ\in K^{n\times n}$ with $\bQ^{\T} \bX \bQ \in \sO_K^{n\times n}$ are of this form.  
        \item[Step 2. Upper-bound exponents] 
        For every prime ideal $\cP$ from the previous step, we determine some $E_{\cP}\geq 0$ with $\cP^e \nmid \cL$ for every $e> E_{\cP}$ as follows:   
        \begin{description}
             \item[Step 2a. Compute square root $\Gamma$] 
        The (approximate) square root polynomial $\Gamma\in \bbF_{\cP}[x]$ from \eqref{eq: Def_Gamma} can be computed without having to factorize $\phi_{\bX}\bmod \cP$ into irreducible factors by using a square-free factorization instead.  
        \item[Step 2b. Determine contractive and isotropic Krylov spaces] 
        Let $\sW_{\cP}(e)$ be the set of all $w\in (\sO_K/\cP^e)^n$ with $w\not\equiv 0 \bmod \cP$ that are representatives of a vector in $\sO_K^n$ for which $\sK_{\bX}(w)$ is $\Gamma$-contractive and isotropic up to the level $\cP^{e}$.
        (We do not track representatives in $\sO_K/\cP^{2e}$ and isotropy of level $\cP^{2e}$ here for algorithmic reasons.)
        We give a method to lift vectors from $\sW_{\cP}(e)$ to vectors in $\sW_{\cP}(e+1)$ by solving linear equations. 
        Initialization exploits that $w\bmod \cP$ lies in $\ker_{\cP}(\Gamma(\bX))$, which is low-dimensional.
        \item[Step 2c. Additional isotropy checks]  The previous step did not enforce the constraint of isotropy up to level $\cP^{2e}$ from Theorem \ref{thm: LevelBound}, only up to level $\cP^e$. 
        We enforce an additional linear condition and determine the subset $\bbW_{\cP}(e) \subseteq \sW_{\cP}(e)$ of those vectors that admit a lift that is isotropic up to level $\cP^{2e}$.    
        \end{description} 
        Theorem \ref{thm: LevelBound} now implies that $E_{\cP}  \de \inf\{e\geq 0: \mathbb{W}_{\cP}(e+1)  = \emptyset   \}$ has the desired property that  $\cP^e \nmid \cL$ for every $e> E_{\cP}$. 
        \item[Step 3. Construct orthogonal matrices] 
        We determine all orthogonal matrices $\bQ\in K^{n\times n}$ with $\bQ^{\T} \bX \bQ \in \sO_K^{n\times n}$ as follows:  
        \begin{description}
            \item[Step 3a. Candidate residues for column vectors]
            Each column $q$ of $\bQ$ has level ideal $\cL(q)\de \{r\in \sO_K: rq\in \sO_K^n \}$ of the form $\cL(q) = \prod_{\cP} \cP^{e_{\cP}}$ for $e_{\cP} \leq E_{\cP}$. 
            We iterate over the possible $e_{\cP}$ and pick some element $\ell \in \cL(q)$ such the exponent of $\cP^{e_{\cP}}$ is preserved in $\ell \sO_K$. 
            The sets $\bbW_{\cP}(e_{\cP})$ then allow us to determine a finite number of possible residue classes for $\ell q \bmod \ell \sO_K$.  
            \item[Step 3b. Lift candidate vectors] 
            For every fixed residue class $W$ there are only finitely many vectors $V\in \sO_K^n$ with $V\equiv W \bmod \prod_{\cP} \cP^{e_{\cP}}$ and $V^{\T}V = \ell^2$. 
            We construct all these vectors $V$ by exploiting that the solutions typically require many coordinates of $V$ to be zero. 
            Considering $V/\ell$ then yields a finite set $\sQ$ of possible column vectors for $\bQ$.  
             \item[Step 3c. Orthogonal matrix] We construct $\bQ$ by picking a subset $\{q_1,\ldots,q_n \} \subseteq \sQ$ subject to orthogonality $q_i^{\T} q_j = \bb1\{i=j \}$ and $\bQ^{\T}\bX \bQ\in \sO_K^{n\times n}$. 
        \end{description} 
    \end{description}

    If it terminates, then the algorithm determines the complete set of orthogonal matrices $\bQ \in K^{n\times n}$ with $\bQ^{\T}\bX \bQ \in \sO_K^{n\times n}$.  
    Every signed permutation of the columns of $\bQ$ again gives a valid orthogonal matrix, so we leave this implicit to reduce the size of the output by a factor $2^n n!$.

    Regarding termination in finite time: it may be clear from the preceding description that the main potential problem lies in the computation of $E_{\cP}$, as this number could potentially be infinite. 
    All other tasks are finite problems.
    We prove in Section \ref{sec: ProofTerminationInFiniteTime} that $E_{\cP}$ is finite whenever $\Delta_{\bX} \neq 0$.
{
\small 
    \begin{algorithm}
\caption{}
\label{alg:orth_matrices}
\begin{algorithmic}[1] 
    
    \renewcommand{\algorithmicrequire}{\textbf{Input:}}
    \renewcommand{\algorithmicensure}{\textbf{Output:}}
    
    \Require 
    Symmetric integer matrix $\bX \in \mathbb{Z}^{n \times n}$, totally real algebraic number field $K$, 
    integer $M \geq 1$ with $p^2 \nmid \Delta_{\bX}$ for all $p > M$.
    
    \Ensure 
    All orthogonal matrices $\bQ \in K^{n \times n}$ with $\bQ^{\T} \bX \bQ \in \sO_K^{n \times n}$.

    \State Determine all $\cP \subseteq \sO_K$ with $\cP^2 \mid \Delta_{\bX}\sO_K$.
    \For{each $\cP$ with $\cP^{2}\mid \Delta_{\bX}\sO_K$} 
        \State Compute approximate square root $\Gamma$ for $\phi_{\bX} \bmod \cP$. 
        \State $\sW(1) \gets \bigl\{ w \in \ker_{\cP}(\Gamma(\bX))\setminus \{0 \} : \forall j \leq n-1, w^{\T}\bX^j w \equiv 0 \bmod \cP \bigr\}$  
        \State $\bbW(1) \gets \bigl\{ w \in \sW(1) : \exists W \equiv w\bmod \cP^2,\, \sK_\bX(W) \textnormal{ isotropic up to level }\cP^2  \bigr\}$
        \State $e\gets 1$ 
        \While{$\bbW(e) \neq \emptyset$}
            \State $e\gets e+1$
            \State $\bbW(e)  \gets \bigl\{w\in (\sO_K/\cP^e)^n: \exists W \equiv w \bmod \cP^e,  \sK_\bX(W)\textnormal{ is }\Gamma\textnormal{-contractive}$ 
            \Statex \hspace{5.5cm} $\textnormal{up to level }\cP^e \textnormal{ and isotropic up to level }\cP^{2e}  \bigr\}$
        \EndWhile
        \State $E_{\cP}  \gets \inf\{e\geq 0: \mathbb{W}_{\cP}(e+1)  = \emptyset   \}$
    \EndFor
    \State $\sQ \gets \emptyset$ 
    \For{each exponent sequence $\{e_{\cP}\leq E_{\cP}: \cP^2\mid \Delta_{\bX} \sO_K \}$}
        \State Find $\ell\in \prod_\cP \cP^{e_{\cP}}$ with $\cP^{e_{\cP}+1}\nmid \ell \sO_K$ for all $\cP$ with $\cP^2\mid \Delta_{\bX}\sO_K$ 
        \State Determine vectors $V\in \sO_K^n$ with $V\bmod\cP^{e_{\cP}}$ in $\mathbb{W}_{\cP}(e_{\cP})$ and $V^{\T}V = \ell^2$ 
        \State Add the vectors $V/\ell \in K^n$ to $\sQ$
    \EndFor   
    \State Construct matrices $\bQ$ with columns $q_1,\ldots,q_n  \in \sQ$ subject to  $q_i^{\T} q_j = \bb1\{i=j \}$
    \State Check if $\bQ^{\T}\bX \bQ\in \sO_K^{n\times n}$
\end{algorithmic}
\end{algorithm}
}

    \subsection{Detailed description}\label{sec: DetailedAlgDescription}
    \subsubsection{Details for Step 1 --- Prime ideal divisors}
    The primes $p\in \bbZ_{\geq 2}$ that ramify in $K$ are the prime divisors of the discriminant of the number field $\operatorname{disc}(\sO_K)$ \cite[C3, Thm.24 \& C4, Thm.34]{marcus1977number}. 
    Computing the number field discriminant is a classical task, although computing it and determining the prime divisors may be prohibitively expensive for large number fields. 
    This is one of the many times when Assumption \ref{ass: SmallNumberField} is used. 
    
    Considering the prime ideals above prime divisors $p\mid \Delta_{\bX}$ that are ramified or have $p^2 \mid  \Delta_{\bX}$ then allows us to find all $\cP \subseteq \sO_K$ with $\cP^2\mid \Delta_{\bX} \sO_K$.   
    
    \subsubsection{Details for Step 2a --- Computing the square root}
    
    If no prime ideals $\cP\subseteq \sO_K$ with $\cP^2\mid \Delta_{\bX}\sO_K$ exist, then Corollary \ref{cor: Ramified2} is applicable and we may conclude that the only orthogonal matrices $\bQ \in K^{n\times n}$ with $\bQ^{\T} \bX \bQ \in \sO_K^{n\times n}$ are signed permutations. 
    Now assume that we are given some $\cP$ with $\cP^2\mid \Delta_{\bX}\sO_K$. 
    
    Recall that the polynomial $\Gamma \in \bbF_{\cP}[x]$ was defined in \eqref{eq: Def_Gamma} using the irreducible factorization of $\phi_{\bX} \bmod \cP$. 
    For computational purposes, it is more efficient to consider the \emph{square--free factorization} \cite{musser1971algorithms,yun1976square}: 
    \begin{equation}
        \textstyle \phi_{\bX}(x) \equiv \prod_i g_i(x)^{i} \bmod \cP, 
    \end{equation}
    where $g_1,g_2,\ldots \in \bbF_{\cP}[x]$ are coprime monic polynomials.  
    Then, 
    \begin{equation}
        \textstyle \Gamma(x) = \prod_{i} g_i(x)^{\lfloor i/2\rfloor}.
    \end{equation} 
    By Lemma \ref{lem: DiscriminantField}  and the assumption that $\cP^2 \mid \Delta_{\bX}\sO_K$, we know that $\deg(\Gamma) \geq 1$. 
    (This application of Lemma \ref{lem: DiscriminantField} uses that $\bbF_{\cP}$ is a finite field and hence perfect.)
    
    Note that the embedding $\bbZ \to \sO_K$ induces an inclusion of finite fields $\bbZ/p\bbZ \to \bbF_{\cP}$. 
    Using that an extension of finite fields is separable, it hence suffices to compute $\Gamma$ over $\bbZ/p\bbZ$.
    The polynomial over $\bbF_{\cP}$ then follows using the field inclusion.

    \subsubsection{Details for Step 2b --- Krylov spaces}
    Recall that $\sW_{\cP}(e) \subseteq (\sO_K/\cP^e)^n$ consists of the vectors that are representative of some $w\in \sO_K^n$ with $w\not\equiv 0 \bmod \cP$ and such that $\sK_{\bX}(w)$ is a $\Gamma$-contractive and isotropic subspace up to level $\cP^e$.

    We have $w\in \sW_{\cP}(1)$ if and only if $\Gamma(\bX)w \equiv 0 \bmod \cP$ and $w^{\T}\bX^j w \equiv 0 \bmod \cP$ for every $j\leq n-1$.    
    A linear-algebraic computation over $\bbZ/p\bbZ$ yields a basis $k_1,\ldots,k_d \in \ker_{\cP}(\Gamma(\bX))$. 
    Let $\bK = [k_1,\ldots,k_d] $ be the $n\times d$ matrix with these columns. 
    Then, $\Gamma(\bX)w \equiv 0 \bmod \cP$ if and only if $w \equiv \bK v \bmod \cP$ for some $v\in \bbF_{\cP}^d$, and  $w^{\T}\bX^j w \equiv 0 \bmod \cP$ for all $j\leq n-1$ if and only if 
    \begin{equation}
        v^{\T} (\bK^{\T} \bX^j \bK) v = 0, \quad \forall j\leq \operatorname{deg}(\Gamma)-1.\label{eq:ColdFog}
    \end{equation}
    We here used that $\bK v\in \ker_{\cP}(\Gamma(\bX))$ to restrict the powers to $j\leq \operatorname{deg}(\Gamma)-1$.  

    Note that \eqref{eq:ColdFog} is an intersection of $\deg(\Gamma)$ quadrics. 
    The general problem of solving an intersection of multivariate quadrics is known as the \emph{MQ problem} in cryptography and is expensive in large problems \cite[\S2.3.1]{ding2006multivariate}.
    We are however concerned with cases where $\operatorname{deg}(\Gamma)$ is small; recall Assumption \ref{ass: DegGamma}.
    Thus, solving \eqref{eq:ColdFog} enables us to determine the following set: 
    \begin{equation}
        \sW_{\cP}(1) = \{ w\in \ker_{\cP}(\Gamma(\bX))\setminus \{0 \} :  \forall j\leq n-1,\,  w^{\T}\bX^j w = 0 \}. 
    \end{equation}

    \begin{remark}
        If $\cP$ lies above $p=2$, then $\bbF_{\cP}$ has characteristic $2$ and hence 
        \begin{equation}
            \textstyle v^{\T}(\bK^{\T} \bX^j \bK) v \equiv  \sum_{i=1}^d D_i^{\bc{j}} v_i^2 \equiv \bigl(\sum_{i=1}^dD_i^{\bc{j}} v_i\bigr)^2\bmod \cP, 
        \end{equation}
        where $D_{i}^{\bc{j}}$ is the $i$-th diagonal entry of $\bK^{\T} \bX^j \bK$ and we used that $D_i^{\bc{j}} = (D_i^{\bc{j}})^2$ since this is an element of the subfield $\bbZ/2\bbZ\subseteq \bbF_{\cP}$.
        Thus, the quadratic constraints \eqref{eq:ColdFog} could be replaced linear ones in this case. 
    \end{remark}
    Now suppose that we are given some $w\in  \sW_{\cP}(e)$ for $e\geq 1$. 
    Our task is to find the lifts to $\sW_{\cP}(e+1)$, or to show that no such lifts exist.

    Fix some arbitrary $\tilde{w}\in (\sO_K/\cP^{e+1})^n$ with $\tilde{w} \equiv w \bmod \cP^e$, and consider some $\pi \in \cP$ with $\pi \not\in \cP^{2}$. 
    Then, $\sO_K/\cP^{e+1}$ is a principal ideal ring with ideals given by the reductions of $\{\cP^j:j\leq e \}$ and $\cP/\cP^{e+1}$ is generated by $\pi \bmod \cP^{e+1}$. 
    Hence, every $\omega \in (\sO_K/\cP^{e+1})^n$ with $\omega \equiv w \bmod \cP^e$ can be uniquely represented as
    \begin{equation}
        \omega \equiv \tilde{w} + \pi^e \delta \bmod \cP^{e+1}, \label{eq:MadFace} 
    \end{equation}
    for $\delta \in \bbF_{\cP}^n$. 
    We next determine when the Krylov space is $\Gamma$-contractive and isotropic. 

    Let $Q_e(x)\in (\sO_K/ \cP^{e})[x]$ be a polynomial of minimal degree with
    \begin{equation}
        Q_e(\bX)w \equiv 0 \bmod \cP^e.\label{eq:YoungPug} 
    \end{equation}
    Such a polynomial can be computed by linear-algebraic computations expressing that $\bX^d w$ is in the span of $\{\bX^j w: j<d \}$ over $\sO_K/\cP^e$. 
    Let  $\tilde{Q}_e\in \sO_K[x]$ be an arbitrary monic lift of $Q_e$ and consider the unique vector $r\in \bbF_{\cP}^n$ with 
        \begin{equation}
            \tilde{Q}_e(\bX) \tilde{w} \equiv \pi^e r \bmod \cP^{e+1}.  
        \end{equation}
    The following Lemmas \ref{lem: ContractiveLift} and \ref{lem: IsotropicLift} turn the constraints of $\Gamma$-contractivity and isotropy into a system of linear equations. 
    \begin{lemma}\label{lem: ContractiveLift}
        With notation and assumptions as above, $\omega = \tilde{w} + \pi^e \delta$ satisfies that $\sK_{\bX}(\omega)$ is $\Gamma$-contractive up to level $\cP^{e+1}$ if and only if  
        \begin{equation}
            \Gamma(\bX) Q_e(\bX) \delta \equiv - \Gamma(\bX)r \bmod \cP.  \label{eq:TediousBall} 
        \end{equation}
    \end{lemma}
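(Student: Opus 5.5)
Implicit in the setup is that $w\in\sW_{\cP}(e)$, i.e.\ that $\sK_{\bX}(w)$ is already $\Gamma$-contractive up to level $\cP^e$. Since $\omega\equiv w\bmod \cP^e$, the conditions for all levels $k\le e-1$ are inherited, so only the new level $k=e$ has to be analysed. The plan is to identify the vectors of $\sK_{\bX}(\omega)\cap(\cP^e)^n$ modulo $\cP^{e+1}$ and check $\Gamma(\bX)$ on each of them.

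\emph{Step 1: elements of $\sK_{\bX}(\omega)$ that are divisible by $\cP^e$.} A general element is $c(\bX)\omega$ with $c\in\sO_K[x]$, and we need to know when $c(\bX)\omega\equiv 0\bmod\cP^e$. Reducing modulo $\cP^e$, this is $c(\bX)w\equiv 0\bmod\cP^e$, i.e.\ $c$ lies in the annihilator of $w$ in $(\sO_K/\cP^e)[x]$. Since $\tilde{Q}_e$ is monic, division with remainder gives $c=a\tilde{Q}_e+b$ with $\deg b<\deg Q_e$. The key claim is that $b(\bX)w\equiv0\bmod\cP^e$ with $\deg b<\deg Q_e$ forces $b(\bX)w$ to contribute nothing new. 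Directly, minimality of $\deg Q_e$ only says that no \emph{monic} polynomial of smaller degree kills $w$, and non-monic relations over the ring $\sO_K/\cP^e$ could exist. I expect this to be the main obstacle. The way around it is to use contractivity at lower levels together with an inductive argument: a nonmonic relation $b$ with leading coefficient in $\cP^j\setminus\cP^{j+1}$ yields, after dividing by $\pi^j$, a vector in the Krylov space at level $j$ whose image under $\Gamma(\bX)$ is divisible by $\cP^{j+1}$. Because $\Gamma(\bX)$ commutes with the polynomials involved, one reduces to $\Gamma(\bX)a(\bX)\tilde{Q}_e(\bX)\omega$. Alternatively, one can define the relevant set as the $\sO_K[x]$-module generated by $\tilde{Q}_e(\bX)\omega$ plus terms already handled by the hypothesis $w\in\sW_{\cP}(e)$.

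\emph{Step 2: compute the generator.} We have
\[
\tilde{Q}_e(\bX)\omega=\tilde{Q}_e(\bX)\tilde{w}+\pi^e\tilde{Q}_e(\bX)\delta\equiv \pi^e\bigl(r+Q_e(\bX)\delta\bigr)\bmod\cP^{e+1}.
\]
Contractivity at level $e$ therefore requires
\[
\Gamma(\bX)a(\bX)\pi^e\bigl(r+Q_e(\bX)\delta\bigr)\equiv0\bmod\cP^{e+1}
\]
for all $a$. Since $\Gamma(\bX)$ commutes with $a(\bX)$, the case $a=1$ suffices. Dividing by $\pi^e$ (using $\pi^e s\in\cP^{e+1}\iff s\in\cP$, by Proposition~\ref{prop: IdealArithmetic}) gives exactly \eqref{eq:TediousBall}. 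For the converse, the same computation shows that \eqref{eq:TediousBall} implies $\Gamma(\bX)$ kills every element of $\sK_{\bX}(\omega)\cap(\cP^e)^n$ modulo $\cP^{e+1}$. Finally, I would note that the result does not depend on the choice of the lift $\tilde{Q}_e$: changing the lift alters $r$ by $Q'(\bX)w$ for some $Q'$, and this is absorbed into the form of the condition.
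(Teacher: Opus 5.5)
Your forward direction is correct and matches the paper: $\tilde{Q}_e(\bX)\omega$ lies in $\sK_{\bX}(\omega)\cap(\cP^e)^n$, and $\tilde{Q}_e(\bX)\omega\equiv\pi^e\bigl(r+Q_e(\bX)\delta\bigr)\bmod\cP^{e+1}$ turns contractivity into \eqref{eq:TediousBall}. Your remark that the levels $k\le e-1$ are inherited from $w$ is also right.

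The converse, however, rests entirely on your Step~1, and Step~1 is not proved. What you need is exactly the paper's Lemma~\ref{lem: DivisionQe}. It says that if $c(\bX)w\equiv 0\bmod\cP^e$ and $c=a\tilde{Q}_e+b$ with $\deg b<\deg Q_e$, then \emph{every} coefficient of $b$ lies in $\cP$, so $b\equiv\pi r\bmod\cP^{e+1}$ for some $r\in\sO_K[x]$.

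Your workaround does not give this. A leading coefficient in $\cP^j\setminus\cP^{j+1}$ does not let you divide $b$ by $\pi^j$; think of $b=\pi x^2+x+1$. The dangerous case, where some lower coefficient of $b$ is a unit, cannot be handled by contractivity at all. The hypothesis on $w$ only controls Krylov vectors at levels $k\le e-1$, whereas $b(\bX)\omega$ sits at the new level $e$. You can only descend to a lower level after factoring $\pi$ out of \emph{all} of $b$. That case must instead be excluded using the minimality of $\deg Q_e$: it is a statement about the annihilator of $w$ modulo $\cP^e$ and has nothing to do with $\Gamma$. Your alternative formulation (the module generated by $\tilde{Q}_e(\bX)\omega$ plus terms already handled) only restates what must be proved.

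One way to close the gap is a Nakayama argument. Let $N$ be the $\sO_K/\cP^e$-span of the vectors $\bX^iw\bmod\cP^e$. Suppose some coefficient of $b$ is not in $\cP$, and let $j<\deg Q_e$ be the largest such index.
\begin{enumerate}
\item Since $b_j$ is a unit modulo $\cP^e$ and $\cP N=\pi N$ there, you get $\bX^jw\in S+\pi N$, where $S$ is the span of $w,\bX w,\ldots,\bX^{j-1}w$.
\item Induction on $i$, using $\bX N\subseteq N$, gives $\bX^iw\in S+\pi N$ for all $i$, i.e.\ $N=S+\pi N$.
\item Iterating with $\pi^e\equiv 0\bmod\cP^e$ yields $N=S$.
\end{enumerate}
So a monic polynomial of degree $j<\deg Q_e$ annihilates $w$ modulo $\cP^e$, contradicting minimality.

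Once $b\equiv\pi r\bmod\cP^{e+1}$ is known, no induction over $j$ is needed:
\begin{enumerate}
\item $b(\bX)\omega\equiv0\bmod\cP^e$ gives $r(\bX)\omega\equiv0\bmod\cP^{e-1}$.
\item Contractivity at the single level $k=e-1$ gives $\Gamma(\bX)r(\bX)\omega\equiv0\bmod\cP^e$.
\item Hence $\Gamma(\bX)c(\bX)\omega\equiv a(\bX)\Gamma(\bX)\tilde{Q}_e(\bX)\omega\bmod\cP^{e+1}$.
\end{enumerate}
Together with your Step~2, this finishes the converse as in Lemma~\ref{lem: ContractiveQ}.
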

    \begin{lemma}\label{lem: IsotropicLift}
        Let $c_j\in \bbF_{\cP}$ be the unique scalars with $\tilde{w}^{\T} \bX^j \tilde{w} \equiv \pi^{e} c_j$ for every $j\leq n-1$. 
        Then,   $\omega = \tilde{w} + \pi^e \delta$ satisfies that $\sK_{\bX}(\omega)$ is isotropic up to level $\cP^{e+1}$ if and only if $\delta$ satisfies the following linear equations over $\bbF_{\cP}$:
        \begin{equation}
            2 w^{\T} \bX^j \delta  \equiv - c_j \bmod \cP \ \ \textnormal{ for } \ \ j=0,1,\ldots,\operatorname{deg}(Q_e) + \operatorname{deg}(Q_1)-1.\label{eq:WeavingBall} 
        \end{equation} 
    \end{lemma}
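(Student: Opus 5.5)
The plan is to rewrite isotropy of a cyclic Krylov space as vanishing of the scalar sequence $\omega^{\T}\bX^j\omega$, linearize this condition in $\delta$, and then use linear recurrences to cut the infinitely many resulting conditions down to the finitely many in \eqref{eq:WeavingBall}.

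\textbf{Step 1: isotropy as a scalar condition.} Since $\bX$ is symmetric, $(\bX^i\omega)^{\T}(\bX^k\omega)=\omega^{\T}\bX^{i+k}\omega$. Hence $\sK_{\bX}(\omega)$ is isotropic up to level $\cP^{e+1}$ if and only if $\omega^{\T}\bX^j\omega\equiv 0 \bmod \cP^{e+1}$ for all $j\geq 0$. By Cayley--Hamilton it is equivalent to require this for all $j\geq 0$ rather than only $j\leq 2n-2$. The same reformulation applied to $w\in\sW_{\cP}(e)$ gives $\tilde w^{\T}\bX^j\tilde w\equiv 0\bmod \cP^e$ for every $j$. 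This is what makes the scalars $c_j$ well defined; I extend their definition to all $j\geq 0$.

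\textbf{Step 2: linearize in $\delta$.} Expanding $\omega=\tilde w+\pi^e\delta$ gives
\[
\omega^{\T}\bX^j\omega = \tilde w^{\T}\bX^j\tilde w + 2\pi^e\,\tilde w^{\T}\bX^j\delta + \pi^{2e}\,\delta^{\T}\bX^j\delta \equiv \pi^e\bigl(c_j+2w^{\T}\bX^j\delta\bigr) \bmod \cP^{e+1}.
\]
Here I used $2e\geq e+1$ and $\tilde w\equiv w \bmod \cP$. By the ideal arithmetic of Proposition \ref{prop: IdealArithmetic} and $\pi\in\cP\setminus\cP^2$, we have $\pi^e s\in\cP^{e+1}$ if and only if $s\in\cP$. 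So isotropy up to level $\cP^{e+1}$ is equivalent to
\[
s_j \de c_j+2w^{\T}\bX^j\delta \equiv 0 \bmod \cP \quad\textnormal{for all } j\geq 0 .
\]

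\textbf{Step 3: reduce to finitely many $j$.} This is the main point. I will show that $(s_j)_j$ satisfies, over $\bbF_{\cP}$, a monic linear recurrence whose characteristic polynomial is $Q_1\cdot(\tilde Q_e \bmod \cP)$, of degree $\deg(Q_e)+\deg(Q_1)$. Then vanishing of the first $\deg(Q_e)+\deg(Q_1)$ terms forces all terms to vanish, which gives exactly \eqref{eq:WeavingBall}. The converse direction is trivial.

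For the $\delta$-part, write $Q_1=\sum_i a_ix^i$. Then
\[
\sum_i a_i\, w^{\T}\bX^{j+i}\delta=(Q_1(\bX)w)^{\T}\bX^j\delta\equiv 0 \bmod \cP,
\]
using symmetry of $\bX$ and $Q_1(\bX)w\equiv 0 \bmod \cP$. So this sequence is annihilated by $Q_1$, and therefore also by $Q_1Q_e$.

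For the $c$-part, write $\tilde Q_e=\sum_i b_ix^i$. Then
\[
\pi^e\sum_i b_i c_{j+i}\equiv \tilde w^{\T}\bX^j\tilde Q_e(\bX)\tilde w\equiv \pi^e\, w^{\T}\bX^j r \bmod \cP^{e+1}.
\]
Cancelling $\pi^e$ as in Step 2, applying $\tilde Q_e$ to $(c_j)$ yields the sequence $(w^{\T}\bX^jr)_j$ modulo $\cP$. That sequence is annihilated by $Q_1$ by the same argument as for the $\delta$-part. Hence $Q_1(\tilde Q_e\bmod\cP)$ annihilates $(c_j)$ modulo $\cP$.

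The subtle points I expect to check are that the lifts are monic, so that the recurrence genuinely determines later terms from earlier ones, and that $\deg(\tilde Q_e)=\deg(Q_e)$. Both hold if $Q_e$ is taken monic, as in its construction by expressing $\bX^d w$ in terms of lower powers, and if $\tilde Q_e$ is chosen as a monic lift of the same degree.
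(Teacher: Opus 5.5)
Your proof is correct and follows essentially the paper's argument: expand $\omega^{\T}\bX^j\omega$, cancel the factor $\pi^e$, and then combine the recurrence from $\tilde Q_e$ (whose defect is the sequence $w^{\T}\bX^j r$) with the recurrence from $Q_1$ to reduce to the first $\deg(Q_e)+\deg(Q_1)$ equations. Your single monic annihilator $Q_1\tilde Q_e$ of the sequence $c_j+2w^{\T}\bX^j\delta$ is a tidier packaging of the paper's induction on $j$; the paper's version proves the slightly more general statement modulo $\cP^{e+k}$, which also yields Lemma \ref{lem: StrongIsotropicLift}.
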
  
    
    The proofs of Lemmas \ref{lem: ContractiveLift} and \ref{lem: IsotropicLift} are given in Sections \ref{sec: LinearRephraseContractive} and \ref{sec: LinearIsotropyProofs}, respectively.
    By solving the linear equations \eqref{eq:TediousBall} and \eqref{eq:WeavingBall}, we can determine all vectors in $\sW_{\cP}(e+1)$ that are lifts of some given $w\in \sW_{\cP}(e)$, or conclude that no such lifts exists.   

    \begin{remark}
        Note that $w\in \sW_{\cP}(e)$ if and only if $rw \in \sW_{\cP}(e)$ for every $r\in \sO_{K}/\cP^e$ with $r\not\equiv 0 \bmod \cP$.
        This means that $r$ reduces to a unit. 
        It hence suffices to determine a single representative in $(\sO_K/\cP^{e})^n$ up to scalar multiplication. 
        Our implementation exploits this to reduce repeated computations by tracking only the vector with $w_{i_*} \equiv 1 \bmod \cP^{e}$ for the smallest index $i_* \leq n$ with $w_{i_*} \not\equiv 0 \bmod \cP$.
        At the end of Step 2, we finally consider all scalar multiples of the found vectors.  
    \end{remark}
   \begin{remark}
       If two prime ideals $\cP_1,\cP_2 \subseteq \sO_K$ are in the same orbit of the Galois group, \ie there exists $\sigma \in \operatorname{Gal}(K/\bbQ)$ with $\sigma(\cP_1) = \cP_2$, then naturally $\sigma(\sW_{\cP_1}(e)) = \sW_{\cP_2}(e)$.
       This could be used to reduce redundant computations, as it implies that it suffices to do the computations only once for each orbit.
       In particular, if $K$ is the splitting field of a polynomial (\ie a Galois extension), then there is only a single orbit \cite[C1,\S9]{neukirch1999algebraic}.  
   \end{remark}

    \subsubsection{Details for Step 2c --- Additional isotropy checks} 
    Recall that Theorem \ref{thm: ColumnRestriction} involves isotropy up to level $\cP^{2e}$, not only up to level $\cP^{e}$.
    The reason why we nonetheless only tracked representatives over $\sO_K/\cP^e$ is due to the combinatorial explosion in the number vectors over $\sO_K/ \cP^{2e}$ relative to $\sO_K/\cP^e$ when $n$ is large.  

    The following variant on Lemma \ref{lem: IsotropicLift} allows us to filter out the subset $\mathbb{W}_{\cP}(e) \subseteq \sW_{\cP}(e)$ consisting of those vectors that admit a lift that is isotropic up to level $\cP^{2e}$. 
    Crucially, we avoid computing these lifts explicitly. 
    Recall that $\pi \in \cP\setminus \cP^2$.  
    \begin{lemma}\label{lem: StrongIsotropicLift}
        Consider some $w\in \sW_{\cP}(e)$.
        Pick some lift $W \in (\sO_K/\cP^{2e})^n$ with $W  \equiv w \bmod \cP^e$ and let $C_j\in \sO_K/\cP^e$ be the scalars with $W^{\T} \bX^j W \equiv \pi^{e} C_j \bmod \cP^{2e}$. 
        Then, there exists  $\Omega\in \sO_K^n$ with $\Omega \equiv w \bmod \cP^e$ and  $\sK_{\bX}(\Omega)$ isotropic up to level $\cP^{2e}$ if and only if the following system  admits a solution $\delta \in (\sO_K/\cP^e)^n$: 
        \begin{equation}
            2 w^{\T} \bX^j \delta  \equiv - C_j \bmod \cP^{e}  \ \ \textnormal{ for } \ \ j=0,1,\ldots,2\operatorname{deg}(Q_e) -1. 
        \end{equation}
    \end{lemma}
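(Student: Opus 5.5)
Every candidate lift is written as $\Omega \equiv W + \pi^e \delta \bmod \cP^{2e}$ with $\delta \in (\sO_K/\cP^e)^n$. This parametrization is valid because $\sO_K/\cP^{2e}$ is a principal ideal ring in which $\cP^e/\cP^{2e}$ is generated by $\pi^e$. Isotropy of $\sK_{\bX}(\Omega)$ up to level $\cP^{2e}$ is, by symmetry of $\bX$, the condition $\Omega^{\T}\bX^j\Omega \equiv 0 \bmod \cP^{2e}$ for every $j\geq 0$. Indeed, $(\bX^a\Omega)^{\T}\bX^b\Omega = \Omega^{\T}\bX^{a+b}\Omega$, and any $R$-combination then reduces to these quantities.

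The first step expands the quadratic form:
\[
\Omega^{\T}\bX^j\Omega \equiv W^{\T}\bX^jW + 2\pi^e W^{\T}\bX^j\delta + \pi^{2e}\delta^{\T}\bX^j\delta \equiv \pi^e\bigl(C_j + 2 w^{\T}\bX^j\delta\bigr) \bmod \cP^{2e}.
\]
Here I use that $W\equiv w \bmod \cP^e$, so $\pi^e W^{\T}\bX^j\delta \equiv \pi^e w^{\T}\bX^j\delta \bmod \cP^{2e}$, and that the last term vanishes. The scalars $C_j$ are well defined: since $w\in\sW_{\cP}(e)$, the Krylov space is isotropic up to level $\cP^e$, so $W^{\T}\bX^jW\in\cP^e$, and $\pi^e$ generates $\cP^e/\cP^{2e}$. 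The fact that $\pi^e r\in\cP^{2e}$ if and only if $r\in\cP^e$ (Proposition~\ref{prop: IdealArithmetic}) then gives the equivalence: $\Omega$ is isotropic up to level $\cP^{2e}$ if and only if $2w^{\T}\bX^j\delta\equiv -C_j \bmod \cP^e$ for all $j\ge 0$. Lifting $\Omega$ from $(\sO_K/\cP^{2e})^n$ to $\sO_K^n$ is harmless, since the conditions only depend on $\Omega$ modulo $\cP^{2e}$.

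The remaining and main step is to show that the equations for $j\ge 2\deg(Q_e)$ follow from those with $j\le 2\deg(Q_e)-1$. My approach is to work with the polynomial $Q_e^2$, which is monic after taking the monic lift $\tilde Q_e$ and has degree $2\deg(Q_e)$. Writing $x^j = A(x)\tilde Q_e(x)^2 + B(x)$ with $\deg B<2\deg Q_e$ reduces the claim to showing that the quantities $\Omega^{\T}\bX^{i}\tilde Q_e(\bX)^2\Omega$ lie in $\cP^{2e}$. For this I would use symmetry to rewrite
\[
\Omega^{\T}\bX^{i}\tilde Q_e(\bX)^2\Omega = \bigl(\tilde Q_e(\bX)\Omega\bigr)^{\T}\bX^i\bigl(\tilde Q_e(\bX)\Omega\bigr).
\]
Since $\tilde Q_e(\bX)\Omega\equiv \tilde Q_e(\bX)w\equiv 0\bmod \cP^e$ by \eqref{eq:YoungPug}, this expression lies in $\cP^{2e}$ automatically.

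It remains to check that this expression is in fact the $\cP^{2e}$-part of the linear condition. Expressing $\bX^j$ through $B$ and $A\tilde Q_e^2$ shows that the quantity $\pi^e(C_j+2w^{\T}\bX^j\delta)$ equals the corresponding $B$-combination of the lower-index quantities plus a term in $\cP^{2e}$. Hence the full system holds if and only if the truncated system over $j\le 2\deg(Q_e)-1$ holds. Making this reduction compatible with the definition of the $C_j$ modulo $\cP^e$, that is, carefully tracking that the identities are between elements of $\cP^e/\cP^{2e}$ and dividing by $\pi^e$, is where I expect the bookkeeping difficulty to lie.
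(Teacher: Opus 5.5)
Your proof is correct, and the bookkeeping you expect at the end is not a real obstacle. Your per-$j$ equivalence ($\Omega^{\T}\bX^j\Omega\equiv 0 \bmod \cP^{2e}$ if and only if the $j$-th linear equation holds) lets you run the whole reduction on $s_j=\Omega^{\T}\bX^j\Omega \bmod \cP^{2e}$. The identity $\bX^j=A(\bX)\tilde{Q}_e(\bX)^2+B(\bX)$ gives $s_j\equiv\sum_{i<2\deg(Q_e)}b_i s_i$ with coefficients that do not depend on $\delta$, so the truncated system forces every $s_j$ into $\cP^{2e}$.

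This is a genuinely different route from the paper's. The paper proves a more general lemma, lifting from level $\cP^e$ to $\cP^{e+k}$ for $k\le e$, and then takes $k=e$. It argues by induction on $j$, using the one-step relation $\bX^{\deg(Q_e)}\tilde{w}\equiv\sum_i q_i\bX^i\tilde{w}+\pi^e r$ to derive separate recurrences for $C_j$ and for $2w^{\T}\bX^j\delta$. This shows that, given the lower equations, the $j$-th equation for $j\ge\deg(Q_e)$ is equivalent to the $\delta$-free condition $w^{\T}\bX^{j-\deg(Q_e)}r\equiv 0 \bmod \cP^k$. It then truncates these conditions using $Q_k$, which gives the bound $\deg(Q_e)+\deg(Q_k)-1$.

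Your factorization $\Omega^{\T}\bX^i\tilde{Q}_e(\bX)^2\Omega=(\tilde{Q}_e(\bX)\Omega)^{\T}\bX^i(\tilde{Q}_e(\bX)\Omega)\in\cP^{2e}$ avoids both the induction and the remainder vector $r$. It also generalizes just as easily: replacing $\tilde{Q}_e^2$ by $\tilde{Q}_e\tilde{Q}_k$ and using $\tilde{Q}_k(\bX)\Omega\equiv 0 \bmod \cP^k$ recovers the paper's general statement. What the paper's computation gives in return is extra structure: once the equations with $j<\deg(Q_e)$ hold, the remaining ones reduce to conditions on the chosen lift that do not involve $\delta$.
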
 

    \subsubsection{Step 3a. --- Candidate residues for column vectors}
    Recall we associate a level ideal $\cL(q) = \{r\in \sO_K : rq \in \sO_K^n \}$ to every column vector $q\in K^{n}$ of the potential orthogonal matrix $\bQ$. 
    This ideal has the following form: 
    \begin{equation}
        \textstyle \cL(q) = \prod_{\cP} \cP^{e_{\cP}} \ \textnormal{ for certain }e_{\cP} \leq E_{\cP}.
    \end{equation} 
    Here, $E_{\cP}  = \inf\{e\geq 0: \mathbb{W}_{\cP}(e+1)  = \emptyset   \}$. 
    We iterate over the possible exponents $e_{\cP}$, and our goal is to determine the corresponding possibilities for $q$. 
    
    If all exponents are zero, then $q$ must be a vector with a single nonzero entry $\pm 1$; recall the proof of Lemma \ref{lem: TotallyRealSigned}.
    We next consider the nontrivial case with $e_{\cP} \geq 1$ for some $e_{\cP}$. 
    Then, by the Chinese remainder theorem over $\sO_K/\prod_{\cP} \cP^{e_{\cP}+1}$, there exists some $\ell\in \cL(q)$ with 
    \begin{equation}
        \ell \in \cP^{e_{\cP}}\setminus \cP^{e_{\cP}+1} \ \textnormal{ for all }\ \cP\ \textnormal{ with }e_{\cP}\geq 1. \label{eq:JustNose}
    \end{equation}
    This element is not unique and the efficiency of subsequent steps will depend on the choice. 
    Our implementation picks $\ell$ such that it has minimal absolute norm \cite[p.8]{neukirch1999algebraic} subject to \eqref{eq:JustNose}.
    In any case, $\ell \in \cL(q)$ implies that its ideal can be factorized as 
    \begin{equation}
        \ell \sO_K = \cL(q) \cJ.
    \end{equation}
    The ideal $\cJ \subseteq \sO_K$ will be nontrivial if $\cL(q)$ was not principal.
        
    It follows from \eqref{eq:JustNose} and the ideal arithmetic of Proposition \ref{prop: IdealArithmetic} that $\ell q \not\equiv 0 \bmod \cP$ whenever $e_{\cP}\geq 1$.
    It can here be deduced from Theorem \ref{thm: ColumnRestriction} that $\ell q \bmod \cP^{e_{\cP}}$ lies in $\bbW_{\cP}(e_{\cP})$.
    Further, the condition that $q$ has level ideal $\cL(q)$ implies that $\ell q \equiv 0 \bmod \cJ$.  
    We can hence determine a set of vectors $W\in (\sO_K/\ell \sO_K)^n$ that could plausibly satisfy $W \equiv \ell q \bmod \ell \sO_K$ by using the Chinese remainder theorem: 
    \begin{equation}
       \begin{cases}
            W \equiv w_\cP \bmod \cP^{e_{\cP}}\ \textnormal{ with }\ w_{\cP} \in \bbW_{\cP}(e_{\cP}) \textnormal{ whenever }e_{\cP}\geq 1, \\ 
            W\equiv 0 \bmod \cJ. 
       \end{cases}
       \label{eq:TipsyDen}  
    \end{equation}
    The set of vectors that arises from this system should typically be small relative to $(\sO_K/\ell \sO_K)^n$ and $(\sO_K/\cL(q))^n$, which are also finite but grow exponentially in $n$.

    \subsubsection{Details for Step 3b --- Lift candidate vectors}
    If $q$ is a column of an orthogonal matrix, then $(\ell q)^{\T} (\ell q) = \ell^2 (q^{\T}q) = \ell^2$.  
    Given some $W \in (\sO_K/\ell \sO_K)^n$ arising from \eqref{eq:TipsyDen} we hence wish to determine all vectors $V \in \sO_K^n$ with 
    \begin{equation}
        V \equiv W \bmod \ell \sO_K\ \textnormal{ and }\ V^T V = \ell^2. \label{eq:PaleTrunk}  
    \end{equation}
    We start by considering a constraint that often quickly shows that $W$ is such that no solutions to \eqref{eq:PaleTrunk} exist, or restricts the search space otherwise.

    Let $\sigma_1,\ldots,\sigma_r: K\to \bbR$ be the field embeddings of our totally real number field. 
    Let us define a \emph{$\ell$-twisted trace} $T_{\ell} :K\to \bbR$ and norm $\Vert \cdot \Vert_\ell$ by 
    \begin{equation}
        \textstyle T_\ell (x) \de \sum_{m} \sigma_m(x/\ell^2) \quad \text{ and }\quad \Vert x \Vert_\ell \de \sqrt{T_\ell(x^2)}.  \label{eq:TallRam}
    \end{equation}
    For every $w\in \sO_K$ representing a residue class in $\sO_K/\ell \sO_K$, we then compute 
    \begin{equation}
        \tau_\ell(w) \de \min_{x\in  \ell \sO_K} T_\ell\bigl((w - x)^2\bigr) =  \min_{y\in  \sO_K} T_1\bigl((w/\ell - y)^2\bigr), \label{eq:AvidActor} 
    \end{equation}
    and we store a value $x(w) \in \ell \sO_K$ achieving the minimum.   
    This is a \emph{closest vector problem} for the lattice $\sO_K$ and its tractability uses Assumption \ref{ass: SmallNumberField}.
    
    It follows from $V^{\T}V = \ell^2$ and linearity that $\sum_i T_\ell(v_i^2) = r$. 
    Thus, the existence of a solution to \eqref{eq:PaleTrunk} necessitates that  
    \begin{equation}
        \sum_{i=1}^n  \tau_\ell(w_i) \leq r. \label{eq:MildCity}
    \end{equation}
    If \eqref{eq:MildCity} is violated, then \eqref{eq:PaleTrunk} does not admit solutions.   
    Now suppose that \eqref{eq:MildCity} holds.
    Heuristically, for large $n$, this should imply that $W$ has only a few nonzero coordinates. 
    The coordinates that are  $0\bmod \ell \sO_K$ yield algebraic integer entries in $V/\ell$ and could hence only be $0$ or $\pm 1$ by $V/\ell$ being a unit vector; recall the proof of Lemma \ref{lem: TotallyRealSigned}.
    In fact, these lifts have to be zero due to $W$ also having nonzero coordinates.
    
    It hence remains to lift the nonzero coordinates, which we do by picking values one-by-one. 
    Suppose that the $v_j$ with $j<i$ have been assigned a value. 
    Then, we restrict the search space for $v_i$ using that completion to a full vector requires satisfaction of the following \emph{lookahead constraint}:
    \begin{equation}
        \textstyle T_\ell\bigl(v_i^2\bigr) \leq r - \sum_{j<i} T_\ell\bigl(v_j^2) - \sum_{j>i}  \tau_\ell(w_j). \label{eq:WeavingSwan}
    \end{equation}
    In particular, if we write $v_i = w_i - x(w_i) + \delta_i$ with $x(w_i)$ the minimizer for \eqref{eq:AvidActor} and $\delta_i \in \ell\sO_K$, then the triangle inequality and \eqref{eq:WeavingSwan} yield that
    \begin{equation}
        \Vert \delta_i \Vert_\ell  \leq   \sqrt{\tau_\ell(w_i)} + \sqrt{r - \sum_{j<i} T_\ell\bigl(v_j^2) - \sum_{j>i}  \tau_\ell(w_j)}. \label{eq:SlowCup}
    \end{equation}
    We compute all $\delta_i$ satisfying \eqref{eq:SlowCup} and additionally filter the choices using that $v_i$ must satisfy the following \emph{spent budget constraints}:  
    \begin{equation}
        \textstyle \sigma_m\bigl(v_i\bigr)^2 \leq \sigma_m\bigl(\ell^2\bigr) - \sum_{j<i}\sigma_m\bigl(v_j^2\bigr) \textnormal{ for every }m\leq r.   
    \end{equation}
    Once a full vector is composed, we finally check if equality holds in $V^{\T}V = \ell^2$. 
   
    \subsubsection{Details for Step 3c --- Orthogonal matrix}
    Considering the vectors $V/\ell$ with $V$  satisfying \eqref{eq:PaleTrunk}, and the standard basis vectors $\pm e_i $ corresponding to $\cL(q)=\sO_K$, yields a finite set $\sQ \subseteq K^n$ of potential column vectors.
    
    It remains to determine orthogonal subsets of size $n$. 
    We do this by exploiting that an orthogonal set of vectors $\{q_i: i=1,\ldots,m \}$ can be completed to an orthogonal basis by adding standard basis vectors if and only if the number of active coordinates is exactly $m$. 
    Here, $j\in \{1,\ldots,n \}$ is \emph{active} if there exists some $q_i$ whose $j$th entry is nonzero. 
    Recursively selecting such subsets based on the vectors $V/\ell$ turns out to run sufficiently quickly in practice.  

    Given an orthogonal set of vectors $\{q_i:i=1,\ldots,n \}$ we then finally check if the associated orthogonal matrix $\bQ\in K^{n\times n}$ indeed satisfies $\bQ^{\T} \bX \bQ\in \sO_K^{n\times n}$. 
    The algorithm returns all orthogonal matrices over $K$ with the latter property, up to signed permutation of the columns.

    \subsection{Proof of algorithmic properties} 
    \subsubsection{Termination in finite time --- Proof of Theorem \ref{thm: Algorithm}}\label{sec: ProofTerminationInFiniteTime}
    Let $q_k \in \bbZ[w_1,\ldots,w_n]$ be the polynomial defined by the quadratic form associated to $\bX^k$:  
    \begin{equation}
        q_k(w_1,\ldots,w_n) = w^{\T} \bX^k w \quad \textnormal{ with }\quad w = (w_1,\ldots,w_n)^{\T}.   
    \end{equation}
    
    \begin{lemma}\label{lem: TrivialSolution}
        Assume that $\Delta_{\bX} \neq 0$. 
        Then, the only $w\in \bbC^n$ satisfying the equations $q_k(w) = 0$ for all $k\leq n-1$ is the trivial solution $w= 0$.  
    \end{lemma}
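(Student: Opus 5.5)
We diagonalize $\bX$ over $\bbR$ and rewrite the conditions in eigencoordinates. Since $\bX$ is real symmetric, there is a real orthogonal matrix $\bU$ with $\bU^{\T}\bX\bU = \operatorname{diag}(\lambda_1,\ldots,\lambda_n)$. Because $\Delta_{\bX}\neq 0$, Remark \ref{rem: DiscPoly} gives $\Delta_{\phi_{\bX}} = \pm\Delta_{\bX}\neq 0$, so the eigenvalues $\lambda_1,\ldots,\lambda_n$ are pairwise distinct.

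Given $w\in\bbC^n$, set $u\de \bU^{\T}w\in\bbC^n$. The quadratic forms use the plain transpose, with no complex conjugation, and $\bU$ is real with $\bU\bU^{\T} = \bI$. Hence
\begin{equation}
    q_k(w) = w^{\T}\bU\operatorname{diag}(\lambda_1^k,\ldots,\lambda_n^k)\bU^{\T}w = \sum_{i=1}^n \lambda_i^k u_i^2 .
\end{equation}
The hypothesis $q_k(w)=0$ for $k=0,\ldots,n-1$ therefore says that the vector $(u_1^2,\ldots,u_n^2)^{\T}\in\bbC^n$ lies in the kernel of the Vandermonde matrix $\bV = (\lambda_i^k)_{0\leq k\leq n-1,\,1\leq i\leq n}$.

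The determinant of $\bV$ is $\prod_{i<j}(\lambda_j-\lambda_i)$, which is nonzero because the eigenvalues are distinct. So $\bV$ is invertible over $\bbC$, and therefore $u_i^2=0$ for all $i$. This forces $u=0$, and then $w=\bU u=0$.

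There is no real obstacle here. The one point needing care is that $q_k$ uses $w^{\T}$ rather than the Hermitian product. This is why the squares $u_i^2$ appear instead of $|u_i|^2$, and the argument still works because $\bU$ is real, so $\bU^{\T}$ is a genuine inverse and $w\mapsto\bU^{\T}w$ is a bijection of $\bbC^n$. No isotropy-type cancellation can occur, since the Vandermonde step yields the vanishing of each individual $u_i^2$.
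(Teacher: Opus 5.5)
Your proof is correct and is essentially the paper's argument: you expand $w$ in a real orthonormal eigenbasis so that $q_k(w)=\sum_i \lambda_i^k c_i^2$, then invert the Vandermonde system using the distinctness of the eigenvalues to get $c_i^2=0$ for all $i$. The only cosmetic difference is that you invoke Remark~\ref{rem: DiscPoly} for distinctness, whereas the paper directly identifies the Vandermonde determinant with $\sqrt{\lvert \Delta_{\bX}\rvert}$.
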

    \begin{proof}
        Let $\lambda_1,\ldots,\lambda_n\in \bbR$ be the eigenvalues of the symmetric matrix $\bX \in \bbZ^{n\times n}$ and consider an associated orthonormal basis of eigenvectors $U_1,\ldots,U_n \in \bbR^n$, so that $\bX = \sum_{i} \lambda_i U_iU_i^{\T}$. 
        Write $w = \sum_{i=1}^n c_i U_i$ for arbitrary coefficients $c_i \in \bbC$. 
        Then,   
        \begin{equation}
            q_k(w) = 0 \quad \iff \quad \sum_{i=1}^n c_i^2\lambda_i^k = 0.  \label{eq:VagueImp}
         \end{equation}
        The system of equations $\{\sum_{i} x_i \lambda_i^k = 0: k \leq n-1\}$ is specified by a Vandermonde matrix with determinant $\prod_{i<j} (\lambda_{j} - \lambda_i) =  \sqrt{\lvert  \Delta_{\bX} \rvert}$. 
        Hence, using that $\Delta_{\bX} \neq 0$, the system only has the trivial solution. 
        Thus,  $c_i^2 = 0$ for all $i$ and hence $w = 0$. 
    \end{proof}
    \begin{lemma}
        Assume that $\Delta_{\bX} \neq 0$.
        Then, for every $i\leq n$ there exist $h_i \geq 0$ and $n_i \in \bbZ\setminus \{0 \}$ such that the monomial $n_i w_i^{h_i}$ can be expressed as a $\bbZ[w_1,\ldots,w_n]$-linear combination of the $q_k$. 
        That is, 
        \begin{equation}
            \textstyle n_i w_i^{h_i} = \sum_{k=0}^{n-1} s_k^{\bc{i}}(w_1,\ldots,w_n) q_k(w_1,\ldots,w_n),\label{eq:GiddyPig} 
        \end{equation}
        for certain polynomials $s_0^{\bc{i}},\ldots,s_{n-1}^{\bc{i}} \in \bbZ[w_1,\ldots,w_n]$. 
    \end{lemma}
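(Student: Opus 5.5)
This is a direct application of Hilbert's Nullstellensatz, followed by clearing denominators.

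First, Lemma \ref{lem: TrivialSolution} shows that, since $\Delta_{\bX}\neq 0$, the common zero locus in $\bbC^n$ of $q_0,\ldots,q_{n-1}$ is the origin alone. Let $I\subseteq \bbC[w_1,\ldots,w_n]$ be the ideal generated by the $q_k$. Each coordinate function $w_i$ vanishes on $V(I)=\{0\}$. The Nullstellensatz over the algebraically closed field $\bbC$ therefore gives $w_i\in\sqrt{I}$. So there exist $h_i\geq 1$ and polynomials $t_k^{\bc{i}}\in\bbC[w_1,\ldots,w_n]$ with $w_i^{h_i}=\sum_k t_k^{\bc{i}}q_k$.

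Next, I would make the coefficients rational. The $q_k$ have integer coefficients. Fix the degree of the $t_k^{\bc{i}}$; the identity is then a system of linear equations in their unknown coefficients, and this system has coefficients in $\bbQ$. A linear system over $\bbQ$ that is solvable over $\bbC$ is also solvable over $\bbQ$, since the rank of a matrix does not change under field extension. Alternatively, one can cite the fact that $I\cap\bbQ[w]=I_{\bbQ}$, because $\bbC$ is faithfully flat over $\bbQ$. Either way, the $t_k^{\bc{i}}$ can be taken in $\bbQ[w_1,\ldots,w_n]$.

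Finally, multiply both sides by a common denominator $n_i\in\bbZ\setminus\{0\}$ of all coefficients of the $t_k^{\bc{i}}$. Setting $s_k^{\bc{i}}=n_it_k^{\bc{i}}\in\bbZ[w_1,\ldots,w_n]$ gives \eqref{eq:GiddyPig}.

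The only step needing care is the descent from $\bbC$ to $\bbQ$, and the linear-algebra argument above handles it. Alternatively, one can apply the Nullstellensatz directly over $\overline{\bbQ}$, since Lemma \ref{lem: TrivialSolution} already covers all points of $\overline{\bbQ}^n\subseteq\bbC^n$. Then descend from the finite extension $\bbQ(\text{coefficients})$ to $\bbQ$ by averaging over embeddings, or again by the linear-algebra argument.
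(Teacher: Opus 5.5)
Your proposal is correct and follows essentially the same route as the paper: Lemma \ref{lem: TrivialSolution} plus the Nullstellensatz over $\bbC$, then the linear-system argument to get coefficients in $\bbQ$, then clearing denominators. The alternatives you mention (faithful flatness, working over $\overline{\bbQ}$ and averaging via the trace) are also valid but not needed.
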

    \begin{proof} 
        Using Lemma \ref{lem: TrivialSolution} and Hilbert's Nullstellensatz \cite[p.4]{hartshorne2013algebraic} yields that $w_i$ is an element of the radical of the ideal generated by the polynomials $q_k$ over $\bbC$. 
        That is, there exists $h_i \geq 0$ and polynomials $\tilde{s}_k^{\bc{i}} \in \bbC[w_1,\ldots,w_n]$ with  
        \begin{equation}
            \textstyle w_i^{h_i} = \sum_{k=0}^{n-1} \tilde{s}_k^{\bc{i}}(w_1,\ldots,w_n) q_k(w_1,\ldots,w_n). \label{eq:VividKnot}
        \end{equation}
        
        Moreover, it follows from the fact that $w_i^{h_i}$ and $q_k(w_1,\ldots,w_n)$ are polynomials over $\bbQ$ that the $\tilde{s}_k^{\bc{i}}$ can also be taken to have coefficients over $\bbQ$. 
        Indeed, one can view \eqref{eq:VividKnot} as a system of linear equations for the coefficients of the $\tilde{s}_k^{\bc{i}}$, and use that a system of linear equations has a solution over a field extension if and only if it has one in the original field. 
        Clearing denominators now yields \eqref{eq:GiddyPig}.          
    \end{proof}
    The following Lemmas \ref{lem: IsotropyXk} and \ref{lem: KrylovIsotropyBound} do not require $K$ to be totally real. 
    \begin{lemma}\label{lem: IsotropyXk}
        Assume that $\Delta_{\bX} \neq 0$ and consider a nonzero prime ideal $\cP \subseteq \sO_K$. 
        Then, there exists some $e \geq 0$ such that every $w\in \sO_K^n$ with $w^{\T}\bX^k w \equiv 0 \bmod \cP^{e}$ for every $k\leq n-1$ satisfies that $w \equiv 0 \bmod \cP$. 
    \end{lemma}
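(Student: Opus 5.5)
The plan is to use the identity \eqref{eq:GiddyPig} from the preceding lemma to turn the vanishing of the quadratic forms $q_k(w) = w^{\T}\bX^k w$ modulo a high power of $\cP$ into the vanishing of each coordinate of $w$ modulo $\cP$. Since $\Delta_{\bX}\neq 0$, that lemma gives, for every $i\leq n$, integers $h_i\geq 0$ and $n_i\in\bbZ\setminus\{0\}$ and polynomials $s_k^{\bc{i}}\in\bbZ[w_1,\ldots,w_n]$ such that
\begin{equation*}
    \textstyle n_i w_i^{h_i} = \sum_{k=0}^{n-1} s_k^{\bc{i}}(w)\, q_k(w).
\end{equation*}
This is an identity of polynomials with integer coefficients. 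Hence it remains valid after substituting any $w\in\sO_K^n$, with all terms lying in $\sO_K$.

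Next I choose $e$. Let $p$ be the rational prime below $\cP$, and let $v_i\geq 0$ be the exponent of $\cP$ in the factorization of the ideal $n_i\sO_K$. This exponent is finite since $n_i\neq 0$. I will take
\begin{equation*}
    e \de \max_{i\leq n} v_i + 1.
\end{equation*}
Suppose now that $w\in\sO_K^n$ satisfies $q_k(w)\equiv 0 \bmod \cP^e$ for all $k\leq n-1$. Every term on the right-hand side of the identity then lies in $\cP^e$, because $s_k^{\bc{i}}(w)\in\sO_K$ and ideals absorb multiplication. Therefore $n_i w_i^{h_i}\in\cP^e$.

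For the conclusion I use the ideal arithmetic of Proposition \ref{prop: IdealArithmetic}. Writing the $\cP$-exponent of $w_i\sO_K$ as $a_i$, we get $v_i + h_i a_i \geq e > v_i$. This forces $h_i\geq 1$ and $a_i\geq 1$, that is, $w_i\in\cP$. If $w_i=0$ there is nothing to prove, and the degenerate case $h_i=0$ is excluded, since then $n_i\in\cP^e$ would contradict $e>v_i$. Hence $w\equiv 0\bmod\cP$ coordinatewise, as required.

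I expect essentially no serious obstacle; the main care point is the bookkeeping with ideal exponents rather than rational valuations. In particular, $n_i$ may be divisible by $\cP$ to an exponent larger than $v_p(n_i)$ when $p$ ramifies, which is why $e$ is defined via the $\cP$-adic exponent of $n_i\sO_K$.
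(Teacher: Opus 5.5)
Your proposal is correct and follows essentially the same route as the paper: substitute $w$ into \eqref{eq:GiddyPig}, choose $e$ larger than the $\cP$-exponent of each $n_i\sO_K$, and apply the ideal arithmetic of Proposition \ref{prop: IdealArithmetic} to force $w_i\in\cP$. Your explicit choice $e=\max_i v_i+1$ simply makes concrete the paper's ``$e$ sufficiently large so that $n_i\notin\cP^e$''.
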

    \begin{proof}
        Let $e\geq 0$ be sufficiently large so that the integers $n_i$ in \eqref{eq:GiddyPig} satisfy that $n_i \not\in \cP^e$ for every $i$. 
        Note that \eqref{eq:GiddyPig} implies that $n_i w_i^{h_i} \equiv 0 \bmod \cP^e$ whenever $w^{\T}\bX^k w \equiv 0 \bmod \cP^e$ for every $k\leq n-1$.   
        By definition of $e$ and the ideal arithmetic Proposition \ref{prop: IdealArithmetic}, it is only possible to have $n_i w_i^{h_i} \in \cP^{e}$ if $w_i \in \cP$. 
    \end{proof}
    \begin{lemma}\label{lem: KrylovIsotropyBound}
        Assume that $\Delta_{\bX} \neq 0$ and consider a nonzero prime ideal $\cP \subseteq \sO_K$. 
        Then, there exists $e\geq 0$ depending on $\bX$ such that there does not exist any $w\in \sO_K^n$ with $w\not\equiv 0 \bmod \cP$ for which $\sK_{\bX}(w)$ is isotropic up to level $\cP^e$.  
    \end{lemma}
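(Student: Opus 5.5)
The plan is to reduce Lemma \ref{lem: KrylovIsotropyBound} directly to Lemma \ref{lem: IsotropyXk}. Take $e\geq 0$ to be the exponent produced by Lemma \ref{lem: IsotropyXk} for the given prime ideal $\cP$. We then argue by contradiction: suppose some $w\in\sO_K^n$ with $w\not\equiv 0\bmod\cP$ has a Krylov space $\sK_{\bX}(w)$ that is isotropic up to level $\cP^{e}$.

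By Definition \ref{def: Krylov}, the vectors $\bX^k w$ for $k\leq n-1$ all lie in $\sK_{\bX}(w)$, and so does $w=\bX^0 w$. Definition \ref{def: Iso} therefore gives $w^{\T}\bX^k w = (\bX^0 w)^{\T}(\bX^k w)\equiv 0\bmod\cP^{e}$ for every $k\leq n-1$. These are exactly the hypotheses of Lemma \ref{lem: IsotropyXk}. That lemma then forces $w\equiv 0\bmod\cP$, which contradicts the choice of $w$.

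There is no real obstacle here. All the substance sits in Lemma \ref{lem: IsotropyXk}, which in turn comes from Lemma \ref{lem: TrivialSolution} via the Nullstellensatz identity \eqref{eq:GiddyPig}. The one thing to check is that only the pairings $w^{\T}\bX^k w$ with $k\leq n-1$ are needed, and these pair $w$ with $\bX^k w$, both of which lie in the Krylov space. The exponent $e$ depends only on $\bX$ (through the integers $n_i$) and on $\cP$, as the statement allows. Note also that symmetry of $\bX$ is not even needed at this step, since we never pair $\bX^i w$ with $\bX^j w$ for $i,j\geq 1$.
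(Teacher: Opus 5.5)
Your proposal is correct and is essentially the paper's proof: take the exponent $e$ from Lemma \ref{lem: IsotropyXk}, note that isotropy of $\sK_{\bX}(w)$ up to level $\cP^e$ gives $w^{\T}\bX^k w\equiv 0 \bmod \cP^e$ for all $k\leq n-1$ (pairing $w$ with $\bX^k w$), and conclude $w\equiv 0\bmod\cP$. Your side remarks, that $e$ depends on $\cP$ as well as $\bX$ and that symmetry is not needed at this step, are also accurate.
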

    \begin{proof}
        This is immediate from Lemma \ref{lem: IsotropyXk} since $\sK_{\bX}(w)$ being isotropic up to level $\cP^e$ necessitates that $w^\T\bX^k w \equiv 0 \bmod \cP^{e}$.
    \end{proof}
    \begin{corollary}
        Assume that $\Delta_{\bX} \neq 0$.  
        Then, Algorithm \ref{alg:orth_matrices} terminates in finite time. 
    \end{corollary}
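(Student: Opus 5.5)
To show that Algorithm \ref{alg:orth_matrices} terminates, I will go through its steps and check that each one is a finite computation. The only step that needs a real argument is the \texttt{while} loop in Step 2, where $E_{\cP}$ is computed.

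\textbf{Step 1.} This is finite because $\Delta_{\bX} \neq 0$: only finitely many rational primes divide $\Delta_{\bX}$, and each has finitely many prime ideals of $\sO_K$ above it. Hence the outer loop runs over a finite set of prime ideals $\cP$.

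\textbf{Step 2.} Fix one such $\cP$. Apply Lemma \ref{lem: KrylovIsotropyBound} to obtain $e_0\geq 0$ such that no $w\in \sO_K^n$ with $w\not\equiv 0 \bmod \cP$ has $\sK_{\bX}(w)$ isotropic up to level $\cP^{e_0}$. Now suppose $e\geq e_0$. Every element of $\bbW_{\cP}(e)$ is, by definition, the residue of some $W$ with $W\not\equiv 0\bmod\cP$ whose Krylov space is isotropic up to level $\cP^{2e}$. Since $\cP^{2e}\subseteq \cP^{e_0}$, the same Krylov space is isotropic up to level $\cP^{e_0}$, which is impossible. Therefore $\bbW_{\cP}(e)=\emptyset$ once $e\geq \max\{e_0,1\}$.

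It follows that the \texttt{while} loop stops after at most $e_0$ iterations and $E_{\cP}<\infty$. Each iteration is itself a finite computation. The sets $\bbW_{\cP}(e)$ are subsets of the finite set $(\sO_K/\cP^e)^n$. They are computed in one of two ways: by brute force for $e=1$ (solving the quadrics \eqref{eq:ColdFog} over the finite field $\bbF_{\cP}$), or by solving the linear systems of Lemmas \ref{lem: ContractiveLift}, \ref{lem: IsotropicLift} and \ref{lem: StrongIsotropicLift} over finite rings. In both cases only finitely many candidates arise.

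\textbf{Step 3.} The exponent sequences satisfy $e_{\cP}\leq E_{\cP}$ over finitely many $\cP$, so there are finitely many of them. For each sequence, the element $\ell$ is found by a finite search using the Chinese remainder theorem. The residue classes $W$ in \eqref{eq:TipsyDen} range over a finite set. For each $W$, the lifts $V$ with $V^{\T}V=\ell^2$ are finite in number, by the Minkowski-lattice argument in the proof of Lemma \ref{lem: FinitelyMany}. Concretely, total reality bounds every $|\sigma(v_i)|$ by $\max_\sigma|\sigma(\ell)|$, so the enumeration in Step 3b searches a bounded region of a lattice. Consequently the candidate set $\sQ$ is finite. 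Step 3c then ranges over $n$-subsets of $\sQ$, of which there are finitely many, so it also terminates.

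The main obstacle is the $E_{\cP}$ step. Finiteness there hinges on Lemma \ref{lem: KrylovIsotropyBound}, and it relies on the isotropy level $\cP^{2e}\subseteq\cP^{e}$ being at least as strong as the level $\cP^{e_0}$ supplied by that lemma. It is only this step where the hypothesis $\Delta_{\bX}\neq 0$ is genuinely used.

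Correctness of the output, namely that every orthogonal $\bQ$ with $\bQ^{\T}\bX\bQ\in\sO_K^{n\times n}$ is found, is a separate matter. It follows from Theorems \ref{thm: MainPlevelP2disc}, \ref{thm: LevelBound} and \ref{thm: ColumnRestriction}, as explained in the description of Steps 1 through 3.
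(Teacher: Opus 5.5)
Your proof is correct and follows essentially the paper's argument: the only nontrivial point is that $E_{\cP}$ is finite, and you and the paper both get this from Lemma \ref{lem: KrylovIsotropyBound}, treating all other steps as finite computations. The one small difference is that the paper deduces emptiness of $\bbW_{\cP}(e)$ from $\bbW_{\cP}(e)\subseteq \sW_{\cP}(e)$ and the vanishing of $\sW_{\cP}(e)$ (isotropy at level $\cP^{e}$), whereas you use isotropy at level $\cP^{2e}$ directly; this is an inessential variation.
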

    \begin{proof}
        The only potential problem was whether the number $E_{\cP} = \inf\{e\geq 0: \mathbb{W}_{\cP}(e+1) =\emptyset \}$ is finite. 
        Lemma \ref{lem: KrylovIsotropyBound} implies that this is indeed the case since  $\sW_{\cP}(e) = \emptyset$ implies that $\mathbb{W}_{\cP}(e) = \emptyset$. 
    \end{proof}
    
    \begin{remark}
        Note that the $\Gamma$-contractivity was not used to prove termination in finite time. 
        Its relevance is only for the efficiency of the algorithm.   
    \end{remark}

    \subsubsection{Linear rephrasing for  contractivity of lifts --- Proof of Lemma \ref{lem: ContractiveLift}}\label{sec: LinearRephraseContractive}
    Recall from \eqref{eq:YoungPug} that $\tilde{Q}_e\in \sO_K[x]$ is a monic polynomial of minimal degree with $\tilde{Q}_e(\bX) w\equiv 0 \bmod \cP^e$. 
    Further, recall that we fix some $\pi\in \cP$ with $\pi \not\in \cP^{2}$.    
    \begin{lemma}\label{lem: DivisionQe}
        Fix some $w\in \sO_K^n$ and $e\geq 1$. 
        Then, for every $q\in \sO_K[x]$ with 
        $
            q(\bX)w \equiv 0 \bmod \cP^e 
        $
        there exist polynomials $d(x),r(x) \in \sO_K[x]$ with 
        \begin{equation}
            q(x) \equiv d(x)\tilde{Q}_e(x) + \pi r(x) \bmod \cP^{e+1}.   
        \end{equation}
    \end{lemma}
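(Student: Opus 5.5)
The plan is to obtain the decomposition from Euclidean division by the monic polynomial $\tilde{Q}_e$, and then to show that the remainder has all of its coefficients in $\cP$. Write $D \de \deg(Q_e) = \deg(\tilde{Q}_e)$. Because $\tilde{Q}_e$ is monic, division with remainder works over $\sO_K$. So for $q$ as in the statement there are $d,s\in \sO_K[x]$ with
\begin{equation*}
    q = d\tilde{Q}_e + s, \qquad \deg(s) < D .
\end{equation*}
From $q(\bX)w\equiv 0$ and $\tilde{Q}_e(\bX)w\equiv 0 \bmod \cP^e$ we get $s(\bX)w \equiv 0 \bmod \cP^e$.

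Suppose we know that every coefficient of $s$ lies in $\cP$. Recall that $\sO_K/\cP^{e+1}$ is a principal ideal ring and that $\cP/\cP^{e+1}$ is generated by $\pi$. Hence each coefficient $s_i\in\cP$ can be written as $s_i \equiv \pi r_i \bmod \cP^{e+1}$ with $r_i\in\sO_K$. Setting $r(x)\de\sum_i r_i x^i$ gives $q\equiv d\tilde{Q}_e+\pi r \bmod \cP^{e+1}$, which is the claim. Everything therefore reduces to the following statement.

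\emph{Claim.} If $s\in\sO_K[x]$ has $\deg(s)<D$ and $s(\bX)w\equiv 0\bmod\cP^e$, then $s\equiv 0\bmod \cP$ coefficientwise.

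We prove the claim by contradiction with a Nakayama argument over the local ring $A\de\sO_K/\cP^e$. Let $N\subseteq A^n$ be the $A[\bX]$-module generated by $w$, and for $j\geq 0$ let $S_j\de\operatorname{span}_A\{\bX^iw: i<j\}$. As in the computation of $Q_e$ described before \eqref{eq:YoungPug}, minimality of $D$ means that there is no monic relation of degree less than $D$. That is, $\bX^jw\notin S_j$ for every $j<D$.

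Now assume $s$ has a coefficient that is a unit modulo $\cP$. Let $m<D$ be the largest index with $s_m\notin\cP$. All coefficients of degree greater than $m$ lie in $\cP$, so their contribution lies in $\cP N$. Dividing by the unit $s_m$ then gives
\begin{equation*}
    \bX^m w\in S_m+\cP N .
\end{equation*}
Next we check that $S_m+\cP N$ is $\bX$-stable. The submodule $\cP N$ is $\bX$-stable, and $\bX$ maps $S_m$ into $S_m+A\bX^mw\subseteq S_m+\cP N$ by the display above. Since $w \in S_m + \cP N$ (because $m \geq 0$ and $w\in S_1 \subseteq S_m$ when $m\geq 1$; if $m=0$ the display says $w\in\cP N$), it follows that $N=A[\bX]w\subseteq S_m+\cP N$.

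We now apply Nakayama's lemma. The module $N$ is finitely generated, and $\cP/\cP^e$ is the nilpotent maximal ideal of $A$. Therefore $N=S_m$, and in particular $\bX^mw\in S_m$. This is a monic relation of degree $m<D$, contradicting minimality. (When $m=0$, the conclusion $N=S_0=0$ contradicts $w\not\equiv 0\bmod\cP$.)

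The main obstacle is exactly this claim. Note that the remainder $s$ need not vanish: non-monic relations such as $\pi(x-a)$ may annihilate $w$ modulo $\cP^e$. So minimality can only be leveraged modulo $\cP N$, and that is why we combine $\bX$-stability with Nakayama's lemma rather than arguing coefficient by coefficient.
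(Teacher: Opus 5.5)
Your proof is correct, and its key step differs from the paper's. Both arguments divide $q$ by the monic $\tilde{Q}_e$. Both then must show that the remainder $s$, which satisfies $\deg(s)<\deg(Q_e)$ and $s(\bX)w\equiv 0\bmod\cP^e$, has all its coefficients in $\cP$.

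The paper uses minimality of $\tilde{Q}_e$ only to put the leading coefficient of $s$ in $\cP$. It then writes $s=\pi\gamma Q_1+\rho$ with $\deg(\rho)<\deg(Q_1)$, and uses minimality of $Q_1$ over the field $\bbF_{\cP}$ to get $\rho\equiv 0\bmod\cP$. That decomposition presupposes that every coefficient of $s$ in degrees $\geq\deg(Q_1)$ lies in $\cP$. The leading-coefficient observation gives this only for the top coefficient. For instance, if $\deg(Q_1)=1$ and $\deg(Q_e)\geq 3$, it does not by itself exclude a remainder such as $\pi x^2+x+1$.

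Your argument via $\bX$-stability and Nakayama's lemma over $\sO_K/\cP^e$ handles all coefficients at once. A unit coefficient in degree $m$ forces $\bX^m w\in S_m+\cP N$, and Nakayama upgrades this to a genuine monic relation of degree $m$. So your route fills in exactly the step the paper's sketch passes over. The cost is modest: you work with modules over the local ring $\sO_K/\cP^e$ rather than only with linear algebra over $\bbF_{\cP}$.

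One minor point: for $m=0$ you appeal to $w\not\equiv 0\bmod\cP$, which is not a hypothesis of the lemma. It is also not needed. $N=S_0=0$ already gives the monic relation $\bX^0w=w\in S_0$ of degree $0<D$, so your general contradiction covers this case too.
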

    \begin{proof}
    Subtracting a multiple of $\tilde{Q}_e$ from $q$ ensures that $\operatorname{deg}(q)<\operatorname{deg}(\tilde{Q}_e)$. 
    Then, the leading coefficient must be a multiple of $\pi$ in $\sO_K/\cP^{e+1}$, as we could otherwise invert it to get a monic polynomial, contradicting the degree minimality of $\tilde{Q}_e$. 
    
    In this case, subtracting multiples of $ \pi Q_1$ ensures that $q(x) = \pi \gamma(x) Q_1(x) + \rho(x)$ for some $ \gamma(x),\rho(x)\in \sO_K[x]$ with $\deg(\rho)< \deg(Q_1)$. 
    However, we then have $\rho(\bX) w \equiv 0 \bmod \cP$ so that the degree minimality of $Q_1$ implies that $\rho \equiv 0 \bmod \cP$.
    Then, $\rho = \pi \rho' \bmod \cP^{e+1}$. 
    The claim then follows with $r(x) = \gamma(x) Q_1(x) + \rho'(x)$.    
    \end{proof}
    \begin{lemma}\label{lem: ContractiveQ}
        Fix some $w\in \sO_K^n$ for which $\sK_{\bX}(w)$ is $\Gamma$-contractive up to level $\cP^e$ for $e\geq 1$. 
        Consider some $\tilde{w} \in \sO_K^n$ with $\tilde{w}\equiv w\bmod \cP^e$. 
        Then, $\sK_{\bX}(\tilde{w})$ is $\Gamma$-contractive up to level $\cP^{e+1}$ if and only if $\Gamma(\bX) \tilde{Q}_e(\bX)\tilde{w} \equiv 0 \bmod \cP^{e+1}$.  
    \end{lemma}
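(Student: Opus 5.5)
We need to prove Lemma \ref{lem: ContractiveQ}: given that $\sK_{\bX}(w)$ is $\Gamma$-contractive up to level $\cP^e$ and $\tilde w\equiv w \bmod \cP^e$, the space $\sK_{\bX}(\tilde w)$ is $\Gamma$-contractive up to level $\cP^{e+1}$ if and only if $\Gamma(\bX)\tilde Q_e(\bX)\tilde w\equiv 0 \bmod \cP^{e+1}$.

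The plan is to describe the elements of $\sK_{\bX}(\tilde w)$ lying in $(\cP^k)^n$ for $k\le e$, modulo $\cP^{k+1}$, and then reduce contractivity to a single test vector using Lemma \ref{lem: DivisionQe}.

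\textbf{Only if.} This direction is immediate. The vector $v\de \tilde Q_e(\bX)\tilde w$ lies in $\sK_{\bX}(\tilde w)$. Since $\tilde Q_e$ is a lift of $Q_e$ and $\tilde w\equiv w \bmod \cP^e$, we have $v\equiv 0\bmod \cP^e$. Contractivity at level $k=e\le (e+1)-1$ then gives $\Gamma(\bX)v\equiv 0 \bmod \cP^{e+1}$.

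\textbf{If, levels $k\le e-1$.} Let $v=q(\bX)\tilde w\in\sK_{\bX}(\tilde w)\cap(\cP^k)^n$. For these levels only congruences modulo $\cP^{k+1}\supseteq \cP^e$ matter. We have $q(\bX)w\equiv q(\bX)\tilde w \bmod \cP^e$, so $q(\bX)w$ also lies in $(\cP^k)^n$ modulo $\cP^{k+1}$. Adjusting by an element of $(\cP^e)^n$, the contractivity of $\sK_{\bX}(w)$ gives $\Gamma(\bX)q(\bX)w\equiv 0\bmod\cP^{k+1}$. Here one should note that contractivity of $\sK_{\bX}(w)$ can be applied to $q(\bX)w$ itself, which lies in $(\cP^k)^n$ exactly because the difference from $v$ lies in $(\cP^e)^n\subseteq(\cP^k)^n$. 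It follows that $\Gamma(\bX)v\equiv 0 \bmod\cP^{k+1}$.

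\textbf{If, level $k=e$.} Take $v=q(\bX)\tilde w\equiv 0\bmod\cP^e$. Then $q(\bX)w\equiv 0\bmod\cP^e$, so Lemma \ref{lem: DivisionQe} yields $q\equiv d\tilde Q_e+\pi r \bmod \cP^{e+1}$. Hence
\begin{equation*}
\Gamma(\bX)v\equiv d(\bX)\,\Gamma(\bX)\tilde Q_e(\bX)\tilde w+\Gamma(\bX)\,\pi r(\bX)\tilde w \bmod \cP^{e+1}.
\end{equation*}
The first term vanishes by hypothesis, since $d(\bX)$ commutes with $\Gamma(\bX)$.

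For the second term, set $u=\pi r(\bX)\tilde w$. We have $u=v-d(\bX)\tilde Q_e(\bX)\tilde w$ modulo $\cP^{e+1}$, and both summands lie in $(\cP^e)^n$, so $u\in(\cP^e)^n$. We need $\Gamma(\bX)u\equiv 0\bmod \cP^{e+1}$. Since $u=\pi\,r(\bX)\tilde w$, this amounts to $\Gamma(\bX)r(\bX)\tilde w\equiv 0 \bmod \cP^e$. Now $r(\bX)\tilde w\in\sK_{\bX}(\tilde w)$ and $\pi r(\bX)\tilde w\in(\cP^e)^n$, so $r(\bX)\tilde w\in(\cP^{e-1})^n$; this uses that $\pi x\in\cP^e$ forces $x\in\cP^{e-1}$, by Proposition \ref{prop: IdealArithmetic}, since $\pi\notin\cP^2$. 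Applying the already established contractivity at level $k=e-1$ to $r(\bX)\tilde w$ gives $\Gamma(\bX)r(\bX)\tilde w\equiv 0\bmod \cP^e$. Multiplying by $\pi$ gives $\Gamma(\bX)u\equiv 0\bmod \cP^{e+1}$, which finishes this level.

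\textbf{Main obstacle.} The delicate point is this last step: dividing by $\pi$ and passing from level $e$ back to level $e-1$. It relies on applying the level-$(e-1)$ statement to $\tilde w$ rather than to $w$, which is exactly what the first part (levels $k\le e-1$) provides.
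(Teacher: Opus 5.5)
Your proof is correct and follows essentially the same route as the paper. The ``only if'' direction tests contractivity on $\tilde Q_e(\bX)\tilde w$, and the ``if'' direction at level $k=e$ uses Lemma~\ref{lem: DivisionQe}, cancels $\pi$ to get $r(\bX)\tilde w\equiv 0 \bmod \cP^{e-1}$, and invokes contractivity at level $e-1$. The only difference is that you explicitly verify the levels $k\le e-1$ and the transfer of contractivity from $w$ to $\tilde w$ via $\tilde w\equiv w \bmod \cP^e$; the paper leaves this implicit when it applies the contractivity of $\sK_{\bX}(w)$ directly to $r(\bX)\tilde w$.
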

    \begin{proof}
        By Definition \ref{def: Krylov} and the Cayley--Hamilton theorem, $\sK_{\bX}(\tilde{w})$ consists exactly of the vectors of the form $q(\bX)\tilde{w}$ for $q\in \sO_K[x]$. 
        In particular, $\tilde{Q}_e(\bX) \tilde{w}\in \sK_{\bX}(\tilde{w}) \cap \cP^e$ by definition of $\tilde{Q}_e$. 
        Thus, by Definition \ref{def: Contractive}, the Krylov space being contractive up to level $\cP^{e+1}$ certainly implies that $\Gamma(\bX) \tilde{Q}_e(\bX)\tilde{w} \equiv 0 \bmod \cP^{e+1}$. 

        It remains to show that $\Gamma(\bX) \tilde{Q}_e(\bX)\tilde{w} \equiv 0 \bmod \cP^{e+1}$ implies that $\Gamma(\bX) q(\bX) \tilde{w} \equiv 0 \bmod \cP^{e+1}$ whenever $q(x)\in \sO_K[x]$ is a polynomial with $q(\bX)\tilde{w} \equiv 0 \bmod \cP^e$. 
        By Lemma \ref{lem: DivisionQe} we can then write $q(x) \equiv d(x) \tilde{Q}_e(x) + \pi r(x) \bmod \cP^{e+1}$. 
        It then follows from $q(\bX) \tilde{w} \equiv 0 \bmod \cP^e$ that $ r(\bX) \tilde{w} \equiv 0 \bmod \cP^{e-1}$. 
        That $\sK_{\bX}(w)$ is $\Gamma$-contractive up to level $\cP^e$ then yields that $\Gamma(\bX) r(\bX) \tilde{w} \equiv 0 \bmod \cP^e$. 
        Hence,  
        \begin{equation}
            \Gamma(\bX)q(\bX) \tilde{w} \equiv d(\bX) \Gamma(\bX) \tilde{Q}_e(\bX) \tilde{w} + \pi\Gamma(\bX) r(\bX) \tilde{w} \equiv 0 \bmod \cP^{e+1},  
        \end{equation}
        as desired. 
    \end{proof}
    \begin{proof}[Proof of Lemma \ref{lem: ContractiveLift}]
        This is immediate from Lemma \ref{lem: ContractiveQ}, since the linear equality \eqref{eq:TediousBall} is equivalent to having $\Gamma(\bX) \tilde{Q}_e(\bX) \omega  \equiv 0 \bmod \cP^{e+1}$ for $\omega = \tilde{w} + \pi^e \delta$.  
    \end{proof}
    
    \subsubsection{Linear rephrasing for isotropy of lifts --- Proofs of Lemmas \ref{lem: IsotropicLift} and \ref{lem: StrongIsotropicLift}}\label{sec: LinearIsotropyProofs}
    The following result covers both of the desired results as a special case:
    \begin{lemma}
        Consider some $w\in \sO_K^n$ whose Krylov space $\sK_{\bX}(w)$ is isotropic up to level $\cP^e$ for $e\geq 1$.
        Fix some $k\in \{1,\ldots,e \}$ and $\pi \in \cP \setminus \cP^{2}$.   
        
        Pick an arbitrary lift $\tilde{w} \in (\sO_K/\cP^{e+k})^n$ with $\tilde{w}  \equiv w \bmod \cP^e$ and let $C_j\in \sO_K/\cP^k$ satisfy $\tilde{w}^{\T} \bX^j \tilde{w} \equiv \pi^{e} C_j \bmod \cP^{e+k}$. 
        Consider $\omega\in \sO_K^n$ and $\delta \in (\sO_K/\cP^k)^n$ with 
        \begin{equation}
            \omega \equiv  \tilde{w} + \pi^{e} \delta  \bmod \cP^{e+k}.\label{eq:CalmJar} 
        \end{equation}
        Then,  $\sK_{\bX}(\omega)$ is isotropic up to level $\cP^{e+k}$ if and only if $\delta$ satisfies the following equations: 
        \begin{equation}
            2 w^{\T} \bX^j \delta  \equiv - C_j \bmod \cP^{k}  \ \ \textnormal{ for } \ \ j=0,1,\ldots,\operatorname{deg}(Q_e) + \operatorname{deg}(Q_k)-1.\label{eq:KindVine} 
        \end{equation}
    \end{lemma}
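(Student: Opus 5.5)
First, reduce isotropy of the whole Krylov space to the scalars $\omega^{\T}\bX^j\omega$. Since $\bX$ is symmetric and $\sK_{\bX}(\omega)$ is the $\sO_K$-span of the vectors $\bX^i\omega$, we have $(\bX^a\omega)^{\T}(\bX^b\omega) = \omega^{\T}\bX^{a+b}\omega$. Hence $\sK_{\bX}(\omega)$ is isotropic up to level $\cP^{e+k}$ if and only if $\omega^{\T}\bX^j\omega \equiv 0 \bmod \cP^{e+k}$ for all $j\geq 0$. Next expand using \eqref{eq:CalmJar}:
\begin{equation*}
\omega^{\T}\bX^j\omega \equiv \tilde w^{\T}\bX^j\tilde w + 2\pi^e \tilde w^{\T}\bX^j\delta + \pi^{2e}\delta^{\T}\bX^j\delta \bmod \cP^{e+k}.
\end{equation*}
Because $k\leq e$, the last term vanishes modulo $\cP^{e+k}$. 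In the middle term, $\tilde w\equiv w\bmod\cP^e$ and $k\le e$, so $\tilde w$ may be replaced by $w$ modulo $\cP^{k}$ after factoring out $\pi^e$. The first term equals $\pi^eC_j$ by definition, which is well defined because $\sK_{\bX}(w)$ is isotropic up to level $\cP^e$. We obtain $\omega^{\T}\bX^j\omega\equiv \pi^e(C_j+2w^{\T}\bX^j\delta)\bmod\cP^{e+k}$. Since $\pi^e r\in\cP^{e+k}$ if and only if $r\in\cP^k$ (Proposition \ref{prop: IdealArithmetic}), the condition is equivalent to \eqref{eq:KindVine} holding for every $j\ge0$.

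The remaining and main step is to show that it suffices to impose \eqref{eq:KindVine} only for $j<\deg(Q_e)+\deg(Q_k)$. The plan is to prove that if the isotropy congruences $\omega^{\T}\bX^j\omega\equiv0\bmod\cP^{e+k}$ hold for $j<D:=\deg Q_e+\deg Q_k$, they propagate to all $j$. For $j\ge D$, write $x^j=x^{a}x^{b}$ and use division. Take monic lifts $\tilde Q_e,\tilde Q_k$ as in \eqref{eq:YoungPug}; $\omega\equiv w\bmod\cP^e$ gives $\tilde Q_e(\bX)\omega\equiv0\bmod\cP^e$, and similarly $\tilde Q_k(\bX)\omega\equiv0\bmod\cP^k$.

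To reduce the degree, split $x^j=x^{j-D}\tilde Q_e(x)\tilde Q_k(x)+(\text{lower degree})$. The leading term contributes
\begin{equation*}
\omega^{\T}\bX^{j-D}\tilde Q_e(\bX)\tilde Q_k(\bX)\omega=(\tilde Q_e(\bX)\omega)^{\T}\bX^{j-D}(\tilde Q_k(\bX)\omega),
\end{equation*}
using symmetry. This lies in $\cP^e\cdot\cP^k=\cP^{e+k}$. Strong induction on $j$ then handles the lower-degree remainder, and all $\omega^{\T}\bX^j\omega$ vanish modulo $\cP^{e+k}$. The reverse implication is trivial.

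The obstacle is verifying cleanly that $\tilde Q_e(\bX)\omega\equiv0\bmod\cP^e$ for $\omega$ rather than $w$, and the analogous statement for $\tilde Q_k$ modulo $\cP^k$. Both follow from $\omega\equiv w\bmod\cP^e$ and $k\le e$. Lemmas \ref{lem: IsotropicLift} and \ref{lem: StrongIsotropicLift} are then the cases $k=1$ and $k=e$.
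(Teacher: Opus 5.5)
Your proof is correct. Its first half is exactly the paper's: expand $\omega^{\T}\bX^j\omega$ to $\pi^e(C_j+2w^{\T}\bX^j\delta)$ modulo $\cP^{e+k}$, then cancel $\pi^e$.

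The truncation step takes a genuinely different route.

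\textbf{The paper's route.} It stays inside the linear system in $\delta$. It writes $\bX^{\deg(Q_e)}\tilde w\equiv\sum_{i}q_i\bX^i\tilde w+\pi^e r \bmod \cP^{e+k}$ and derives recurrences for the $C_j$ and for $2w^{\T}\bX^j\delta$. By induction, given the equations for all $j'<j$, the $j$-th equation becomes equivalent to $w^{\T}\bX^{j-\deg(Q_e)}r\equiv 0 \bmod \cP^k$. It then uses $Q_k$ to show this residual family is determined by its first $\deg(Q_k)$ members.

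\textbf{Your route.} You use the per-$j$ equivalence to go back to the quadratic quantities $\omega^{\T}\bX^j\omega$ themselves. You propagate their vanishing modulo $\cP^{e+k}$ by dividing $x^j$ by the monic product $\tilde Q_e\tilde Q_k$. The leading term is the pairing $(\tilde Q_e(\bX)\omega)^{\T}\bX^{j-D}(\tilde Q_k(\bX)\omega)$, which lies in $\cP^e\cP^k$. The needed facts $\tilde Q_e(\bX)\omega\equiv 0 \bmod \cP^e$ and $\tilde Q_k(\bX)\omega\equiv 0 \bmod \cP^k$ follow from $\omega\equiv w \bmod \cP^e$ and $k\le e$, as you note.

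\textbf{Comparison.} Your argument is shorter and avoids both the remainder vector $r$ and the induction interleaved with the equations. It also explains why the cutoff is $\deg(Q_e)+\deg(Q_k)$: it is the degree of a product of two annihilators whose levels add up to $e+k$. The paper's version instead shows the redundancy directly at the level of the linear congruences in $\delta$. In doing so it gives an explicit recurrence for the $C_j$ and isolates the exact residual condition $w^{\T}\bX^{m}r\equiv 0 \bmod \cP^k$.
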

    \begin{proof}
        Note that $\sK_{\bX}(\omega)$ is isotropic up to level $\cP^{e+k}$ if and only if $\omega^{\T} \bX^j \omega \equiv 0 \bmod \cP^{e+k}$ for all $j$. 
        Here, expanding \eqref{eq:CalmJar} using the symmetry of $\bX$, 
        \begin{align}
            \omega^{\T} \bX^j \omega &= \tilde{w}^{\T} \bX^j \tilde{w} +\pi^e 2\tilde{w}^{\T} \bX^j \delta + \pi^{2e}\delta^{\T}\bX^j \delta \equiv \pi^e \Bigl( C_j + 2w^{\T}\bX^j\delta\Bigr) \bmod \cP^{e+k},     
        \end{align}
        where we used that $\tilde{w} \equiv w \bmod \cP^e$, the assumption that $\sK_{\bX}(w)$ is isotropic up to level $\cP^e$,  and that $e\geq k$. 
        We may conclude that $\sK_{\bX}(\omega)$ is isotropic up to level $\cP^{e+k}$ if and only if $ 2w^{\T}\bX^j\delta\equiv -C_j \bmod \cP^k$ for all $j \leq n-1$.

        It remains to show that every $\delta$ with $2w^{\T}\bX^j\delta\equiv -C_j \bmod \cP^k$ for $j \leq \operatorname{deg}(Q_e) + \operatorname{deg}(Q_k)-1$ satisfies the constraints for all larger values of $j$.  
        We proceed by induction on $j$. 
        Consider some $j \geq \operatorname{deg}(Q_e) $ with $ 2w^{\T}\bX^{j'}\delta\equiv -C_{j'} \bmod \cP^k$ for all $j'< j$. 
        
        Recall that $Q_e$ is monic with $ Q_e(\bX)w \equiv 0 \bmod \cP^{e}$. 
        It follows that there exist coefficients $q_i^{\bc{e}}  \in \sO_K$ with  
        \begin{equation}
        \textstyle 
           \bX^{\deg(Q_e)} \tilde{w} \equiv \sum_{i<\deg(Q_e)} q_i^{\bc{e}} \bX^{i}\tilde{w} + \pi^e r \bmod \cP^{e+k}.  \label{eq:HighNab} 
        \end{equation}
        Hence, for every $j\geq \operatorname{deg}(Q_e)$, using the definition $\tilde{w}^{\T} \bX^j \tilde{w} = \pi^e C_j$,  
       \begin{equation}
       \textstyle 
        C_{j} 
        \textstyle\equiv  \sum_{i<\deg(Q_e)}q_i^{\bc{e}} C_{j -\operatorname{deg}(Q_e) +i} + w^{\T} \bX^{ j - \operatorname{deg}(Q_e)} r \bmod \cP^k. 
       \end{equation}
       Similarly, reducing \eqref{eq:HighNab} modulo $\cP^k$ and using the induction hypothesis, 
       \begin{align}
        \textstyle
        2w^{\T}\bX^{j}\delta 
        & \textstyle\equiv \sum_{i<\deg(Q_e)}2q_i^{\bc{e}} w^{\T} \bX^{j-\operatorname{deg}(Q_e) + i} \delta \bmod \cP^{k}\\ 
        & \textstyle
        \equiv - \sum_{i<\deg(Q_e)}q_i^{\bc{e}}C_{j - \operatorname{deg}(Q_e) + i} \bmod \cP^{k}. \nonumber   
       \end{align}
       Thus, $2w^{\T}\bX^{j}\delta \equiv -C_j\bmod \cP^k$ for $j\geq \operatorname{deg}(Q_e)$ if and only if $w^{\T} \bX^{j- \operatorname{deg}(Q_e)}r \equiv 0 \bmod \cP^k$.
       It follows from $Q_k$ being monic with $Q_k(\bX)w \equiv 0 \bmod \cP^k$ that it is equivalent to impose the latter constraint only for $j\leq   \operatorname{deg}(Q_e) + \operatorname{deg}(Q_k)-1$. 
    \end{proof}
    \section{Numerical experiments}\label{sec: Numerical}      
    We finally apply Algorithm \ref{alg:orth_matrices} to random integer matrices with entries uniform on $[-100,100]\cap \bbZ$.
    To our knowledge, no numerical data is available in such settings in previous literature, due to the difficulty of exhaustive approaches.  
    Our experiments below use $M=10^4$ as the prime cutoff for Assumption \ref{ass: SmallPrimes}.
    \pagebreak[4]
    
    Table \ref{tab: RationalCospectral} concerns cospectrality over $\bbQ$ in varying dimensionality $n\leq 100$. 
    We observe that the probability of rational cospectrality peaks at approximately $40\%$  near $n\approx 7$. 
    This finding gives numerical evidence in favor of a conjecture by the author \cite[Conjecture 1.9]{vanwerde2026exact} that posits that the probability of rational cospectrality should be bounded away from one for integer matrices in fixed dimension.

    Rational cospectrality appears to be less frequent in high dimensions. 
    For example, no rationally cospectral mates were found in dimension $n=100$. 
    This is suggestive of an integer matrix variant of Haemers' conjecture concerning the spectral determinacy of high-dimensional random graphs \cite{van2003graphs,haemers2016almost}.

    We can also study the orthogonal matrix realizing cospectrality.
    The \emph{block size} $\lvert \bQ \rvert$ is the number of columns with strictly more than one nonzero entry.
    We identify the level ideal $\cL\subseteq \bbZ$ from \eqref{eq: Def_L} with the positive integer $\ell \in \bbZ_{\geq 1}$ satisfying $\cL = \ell \bbZ$.     
    Table \ref{tab: RationalCospectral} shows that the blocksize and level may both be relatively large when $n$ is small, but that it is typical to have small values for large $n$.
    Most rational cospectral mates for $n\geq 15$ had the smallest possible level $\ell =2$ and a small blocksize $\approx 4$.
    That small blocksizes are typical is partially explained by our proofs; recall \eqref{eq:MildCity}.    
    
        \begin{table}[h!]
  \centering
  \begin{tabular}{ccccc}
    \toprule
    $n$ & $\#$Cospectral in $\mathbb{Z}$ over $\mathbb{Q}$ & Avg. block size $\lvert \bQ \rvert$ & Avg. level $\ell$& Timeout \\
    \midrule
    2 & 74 & 2.000 & 9.350 & 0 \\
    3 & 246 & 2.838 & 16.269 & 0 \\
    4 & 315 & 3.594 & 8.933 & 0 \\
    5 & 360 & 4.120 & 9.904 & 0 \\
    6 & 383 & 4.878 & 11.790 & 0 \\
    7 & 407 & 5.492 & 103.945 & 0 \\
    8 & 379 & 6.167 & 18.131 & 0 \\
    9 & 389 & 6.265 & 12.765 & 0 \\
    10 & 343 & 6.507 & 9.222 & 0 \\
    11 & 253 & 6.492 & 20.571 & 0 \\
    12 & 182 & 6.113 & 5.028 & 0 \\
    13 & 132 & 5.705 & 3.496 & 0 \\
    14 & 84 & 4.869 & 2.768 & 0 \\
    15 & 54 & 4.817 & 2.413 & 2 \\
    16 & 23 & 4.580 & 2.540 & 1 \\
    17 & 20 & 4.286 & 2.048 & 0 \\
    18 & 6 & 4.000 & 2.000 & 1 \\
    19 & 14 & 4.235 & 2.000 & 1 \\
    20 & 4 & 4.286 & 2.143 & 1 \\
    25 & 0 & N/A & N/A & 0 \\
    50 & 0 & N/A & N/A & 2 \\
    100 & 0 & N/A & N/A & 0 \\
    \bottomrule
  \end{tabular}
  \caption{The number of integer matrices among $1000$ random samples where Algorithm \ref{alg:orth_matrices} with $M=10^4$ and $K = \bbQ$ finds a cospectral mate, as well as average the blocksizes and levels of the rational orthogonal matrices realizing the cospectrality.
  Computations that were inconclusive due to exceeding a time limit are reported under ``Timeout''. }
  \label{tab: RationalCospectral}
\end{table}

\pagebreak[4]

    We finally consider cospectrality over the quadratic fields $K \in  \{\bbQ(\sqrt{d}):d=2,3,5,6,7 \}$.
    Table \ref{tab:quadratic} provides data on the number of matrices that admit a nontrivial cospectral mate in $R^{n\times n}$ strictly over $K$ for $R\in \{\bbZ, \sO_K\}$. 
    Here, \emph{strictly} means that we consider only those mates that arise through conjugation with an orthogonal matrix over $K$ for which not all entries lie in the proper subfield $\bbQ$.
    
    We observe in Table \ref{tab:quadratic} that the quadratic fields produce a nontrivial number of additional cospectral mates, both in $\sO_K$ and $\bbZ$.
    Notably, the probability for $\bbZ$ depends sensitively on the dimensionality $n$. 
    For example, we find a substantial number of integer cospectral mates strictly over $\bbQ(\sqrt{2})$ only in even dimensions. 
    A similar phenomenon of sensitive dependence on the dimensionality is observed for $\bbQ(\sqrt{d})$ with $d>2$, but is not simply based on divisibility of $n$ by $d$.   
    
    \begin{table}[h!]
  \centering
  \begin{tabular}{lccccccccc}
    \toprule
     Dimension $n$& $2$ & $3$ & $4$ & $5$ & $6$ & $7$ & $8$ & $9$ & $10$ \\
    \midrule
    $\#\mathbb{Z}$ over $\mathbb{Q}(\sqrt{2})$ & 495 & 0 & 330 & 0 & 166 & 0 & 78 & 0 & 14 \\
    $\#\bbZ[\sqrt{2}]$ over $\mathbb{Q}(\sqrt{2})$ & 508 & 645 & 640 & 580 & 504 & 437 & 333 & 213 & 131 \\
    Timeout & 0 & 2 & 2 & 0 & 6 & 4 & 15 & 19 & 29 \\
     \midrule
    $\#\mathbb{Z}$ over $\mathbb{Q}(\sqrt{3})$ & 0 & 0 & 240 & 0 & 0 & 0 & 23 & 0 & 0 \\
    $\#\bbZ[\sqrt{3}]$ over $\mathbb{Q}(\sqrt{3})$ & 147 & 279 & 379 & 239 & 128 & 50 & 37 & 6 & 0 \\
    Timeout & 0 & 3 & 1 & 1 & 6 & 4 & 21 & 15 & 24\\
    \midrule 
    $\#\mathbb{Z}$ over $\mathbb{Q}(\sqrt{5})$ & 347 & 0 & 94 & 0 & 12 & 0 & 1 & 0 & 0 \\
    $\#\mathbb{Z}[\frac{\sqrt{5}+1}{2}]$ over $\mathbb{Q}(\sqrt{5})$ & 358 & 236 & 291 & 227 & 177 & 95 & 53 & 5 & 4 \\
    Timeout & 0 & 0 & 1 & 0 & 9 & 5 & 24 & 17 & 23 \\
    \midrule 
    $\#\mathbb{Z}$ over $\mathbb{Q}(\sqrt{6})$ & 0 & 0 & 67 & 0 & 0 & 0 & 3 & 0 & 0 \\
    $\#\mathbb{Z}[\sqrt{6}]$ over $\mathbb{Q}(\sqrt{6})$ & 36 & 103 & 108 & 27 & 7 & 0 & 3 & 0 & 0 \\
    Timeout & 0 & 1 & 2 & 2 & 3 & 5 & 17 & 16 & 26 \\
    \midrule 
    $\#\mathbb{Z}$ over $\mathbb{Q}(\sqrt{7})$ & 0 & 0 & 47 & 0 & 0 & 0 & 0 & 0 & 0 \\
    $\#\mathbb{Z}[\sqrt{7}]$ over $\mathbb{Q}(\sqrt{7})$ & 39 & 35 & 59 & 15 & 0 & 0 & 0 & 0 & 0 \\
    Timeout & 0 & 1 & 3 & 2 & 4 & 2 & 11 & 13 & 27 \\
    \bottomrule
  \end{tabular}
  \caption{The number of integer matrices in $1000$ samples where Algorithm \ref{alg:orth_matrices} with $M=10^4$ finds a cospectral mate with entries in $R\in \{\bbZ,\sO_K \}$ strictly over $K =\bbQ(\sqrt{d})$.
  Computations that were inconclusive due to exceeding a time limit are reported under ``Timeout''.}
  \label{tab:quadratic}
\end{table}

Let us finally note that Algorithm \ref{alg:orth_matrices} is also applicable to simple graphs. 
However, phenomena related to Remark \ref{rem: SimpleDisc} make it less efficient in high dimensions ($n\gg 20$). 
Potential refinements of the algorithm to simple graphs are an interesting problem, which we intend to pursue in future work.  
We refer to \cite{wang2025haemers,van2026exact} for numerical data on generalized cospectrality in simple graphs. 
Numerical data for graphs with loops and signed graphs is given in Appendix \ref{apx: NumericalSigned}.
\pagebreak[4]
     \addtocontents{toc}{\SkipTocEntry}
  \subsection*{Acknowledgements}
    I thank Nikita Lvov and Nils Van de Berg for discussions related to the topic of this paper. 
    
    Funded by the Deutsche Forschungsgemeinschaft (DFG, German Research Foundation) under Germany's Excellence Strategy EXC 2044/2 –390685587, Mathematics Münster: Dynamics–Geometry–Structure.
     \addtocontents{toc}{\SkipTocEntry}
 
     \subsection*{Tool and computational resource disclosure}
     The author developed the main mathematical content and the writing. 
     In the preparation of this manuscript, large language models (LLMs) were used as a supporting tool for language refinement or idea exploration, never to autonomously generate text.
     For the codebase, LLMs were used to optimize the implementation, as discussed below.
    
    The initial implementation (v0.0.1 on GitHub) was written manually, with LLM use (Google Gemini Pro 3.1) being restricted to \texttt{SageMath} syntax searches and local code fragment suggestions. 
    The initial implementation was however relatively slow, which we suspect to be primarily due to reliance on a generic \texttt{SageMath} quotient ring constructor whose arithmetic is not optimized for the special case of (quotients of) rings of integers.
    There were also some bugs. 

Iterations to improve the code efficiency and quality were hereafter done with more intensive LLM use, mostly with Anthropic Claude Opus 4.8.  
To ensure that the code remained maintainable and understood, a two-stage process was used. 
First, a LLM was used to aid in drafting design specifications that addressed issues in the code, which the author then edited. 
Next, the specifications were implemented using a separate LLM instance.

The resulting code was reviewed by the author and tested on exhaustively generated examples, including all cospectral $3\times 3$ matrices with $\sum_{i\leq j} \bX_{i,j}^{2} \leq 25$. 
The algorithm was run over the field $K$ required to realize the cospectrality, which was determined using the orthogonal matrices from the pair's eigendecomposition. 
The degree of the required extension $K/\bbQ$ ranged from $1$ to $4$.

The experiments in Section \ref{sec: Numerical} used version 0.0.3 of the implementation.
The source code is available at \url{https://github.com/Alexander-Van-Werde/cospectral}.

    \appendix
    
    \section{Matrices with discriminant \texorpdfstring{$\pm 1$}{+/-1} --- Proof of Proposition \ref{prop: Discpm1}}\label{sec: MatrixDiscpm1}
    Recall from Remark \ref{rem: DiscPoly} that the discriminant of a monic polynomial $f\in \bbZ[x]$ with roots $\lambda_1,\ldots,\lambda_n\in \bbC$ is given by $\Delta_f = \prod_{i<j}  (\lambda_i -\lambda_j)^2$.
    \begin{lemma}\label{lem: IrreducibleDegree1}
        Let $f\in \bbZ[x]$ be a monic polynomial with $\Delta_f = \pm 1$. 
        Then, all irreducible factors of $f$ over $\bbZ[x]$ have degree one. 
    \end{lemma}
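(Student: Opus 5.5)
Let $g\in\bbZ[x]$ be an irreducible factor of $f$ of degree $d\geq 2$; the plan is to show that $\Delta_g=\pm1$ and then reach a contradiction via Minkowski's discriminant bound. Since $f$ is monic, Gauss's lemma lets us take $g$ monic as well. Write $f=gh$ with $h$ monic.

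The first step is the standard product formula for discriminants of monic polynomials:
\begin{equation}
\Delta_f=\Delta_g\,\Delta_h\,\operatorname{Res}(g,h)^2 .
\end{equation}
This follows directly from the root formula $\Delta_f=\prod_{i<j}(\lambda_i-\lambda_j)^2$ recalled above, by sorting the pairs of roots according to whether both belong to $g$, both to $h$, or one to each. All three factors are integers: each is a symmetric integer polynomial expression in the roots of a monic integer polynomial. Their product is $\pm1$, so $\Delta_g=\pm1$. In particular $\Delta_g\neq 0$, so $g$ is separable.

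Next, let $\alpha$ be a root of $g$ and set $L=\bbQ(\alpha)$, a number field of degree $d$. The order $\bbZ[\alpha]\subseteq\sO_L$ has discriminant $\Delta_g$. The index formula
\begin{equation}
\Delta_g=[\sO_L:\bbZ[\alpha]]^2\operatorname{disc}(\sO_L)
\end{equation}
then forces $\lvert\operatorname{disc}(\sO_L)\rvert=1$. Equivalently, no prime ramifies in $L$; this is the same fact \cite[Theorem 34]{marcus1977number} cited in the text, which says that prime divisors of $\operatorname{disc}(\sO_L)$ ramify.

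The final step, which carries the real content, is Minkowski's theorem: every number field $L\neq\bbQ$ satisfies $\lvert\operatorname{disc}(\sO_L)\rvert>1$, i.e.\ some prime ramifies. It follows from the Minkowski bound
\begin{equation}
\lvert\operatorname{disc}(\sO_L)\rvert^{1/2}\geq \frac{d^d}{d!}\Bigl(\frac{\pi}{4}\Bigr)^{d/2}>1
\end{equation}
for $d\geq 2$ \cite{neukirch1999algebraic}. This contradicts $\lvert\operatorname{disc}(\sO_L)\rvert=1$, so every irreducible factor of $f$ has degree one.

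Proving Minkowski's theorem from scratch would be the main obstacle, but it is classical and I would simply cite it. A lighter alternative for the intended application is available, since there $f=\phi_{\bX}$ has only real roots. A monic integer polynomial of degree $n\geq 2$ with $n$ distinct real roots and $\lvert\Delta\rvert=1$ could also be ruled out by a Schur-type bound on discriminants of totally real polynomials. That route seems more delicate than citing Minkowski, so I would use Minkowski.
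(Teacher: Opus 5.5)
Your proof is correct and takes essentially the same route as the paper. The paper also splits off a monic irreducible factor of degree at least two using $\Delta_f=\Delta_g\Delta_h\operatorname{Res}(g,h)^2$. It then passes from $\Delta_g=\pm1$ to $\lvert\operatorname{disc}(\sO_L)\rvert=1$ via the index of $\bbZ[\alpha]$ in $\sO_L$, phrased as $\operatorname{disc}(1,\alpha,\ldots,\alpha^{k-1})=\det(\bM)^2\operatorname{disc}(\sO_K)$ for an integer change-of-basis matrix $\bM$. Finally it contradicts Minkowski's theorem that $\lvert\operatorname{disc}(\sO_L)\rvert\geq 2$ for $L\neq\bbQ$. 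One point you should make explicit: $[L:\bbQ]=d$ requires $g$ to be irreducible over $\bbQ$, which Gauss's lemma supplies, exactly as in the paper.
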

    \begin{proof}
        Suppose to the contrary that $f(x) = a(x) b(x)$ for some monic irreducible $a(x)\in \bbZ[x]$ with $\deg(a) \geq 2$. 
        Then, $\Delta_f = \Delta_a \Delta_b R(a,b)^2$ with $R(a,b) = \prod_{i}\prod_j (\alpha_i -\beta_j)$ for $\alpha_i$ (resp. $\beta_j$) running over the roots of $a(x)$ (resp. $b(x)$).
        Here, $R(a,b)$ is the \emph{resultant} of $a,b$ is again an integer; see \eg \cite[Chapter IV, Section 8]{lang2012algebra}.
        It hence remains to show that $\lvert \Delta_a \rvert \geq 2$.
      
        Let $\alpha\in \bbC$ be an arbitrary root of $a$, let $K \de \bbQ(\alpha)$, and denote $k \de \deg(a)$. 
        Recall that the \emph{discriminant} of $k_1,\ldots,k_m\in K$ is $\operatorname{disc}(k_1,\ldots,k_k) \de \det([\sigma_i (k_j) ])^2$ with $\sigma_i:K\to \bbC$ the field embeddings \cite[p.18]{marcus1977number}.
        Recall the classical fact that, 
        \begin{equation}
            \operatorname{disc}(1,\alpha, \ldots,\alpha^{k-1}) =  \prod_{i < j} \bigl( \sigma_i(\alpha) - \sigma_j(\alpha) )^2  = \pm \Delta_a , \label{eq:NewOak}
        \end{equation}
        which can be proved by considering a Vandermonde determinant \cite[p.19]{marcus1977number}.
        Let $\operatorname{disc}(\sO_K)$ be the discriminant of an arbitrary $\bbZ$-module basis $\gamma_1,\ldots,\gamma_k \in \sO_K$.
        Then, $(1,\alpha, \ldots,\alpha^{k-1})^{\T} = \bM (\gamma_1,\ldots,\gamma_k)^{\T}$ for some $\bM \in \bbZ^{k\times k}$ and hence, 
        \begin{equation}
            \operatorname{disc}(1,\alpha, \ldots, \alpha^{k-1}) = \det(\bM)^2 \operatorname{disc}(\sO_K).\label{eq:AlertQuip} 
        \end{equation}
        Note that $K\neq \bbQ$ since $a$ is monic and irreducible over $\bbZ[x]$ and hence also over $\bbQ$ by Gauss's lemma \cite[Theorem 1, p.10]{marcus1977number}. 
        It however holds that $\lvert\operatorname{disc}(\sO_K)\rvert \geq 2$ for any non-trivial number field $K \neq \bbQ$ \cite[p.96]{marcus1977number}.    
        The result hence follows from \eqref{eq:NewOak} and \eqref{eq:AlertQuip} since $\det(\bM) \in\bbZ$ and $\Delta_a \neq 0$.  
    \end{proof}
    
    \begin{lemma}\label{lem: Degree2}
        Let $f\in \bbZ[x]$ be a monic polynomial for which all roots are real and $\deg(f)>1$.
        Then, $\Delta_f = \pm 1$ implies that $\deg(f) = 2$.
    \end{lemma}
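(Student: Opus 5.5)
By Lemma \ref{lem: IrreducibleDegree1}, every irreducible factor of $f$ has degree one, so $f$ splits over $\bbZ$:
\begin{equation}
    f(x) = \prod_{i=1}^n (x-\lambda_i), \qquad \lambda_i \in \bbZ .
\end{equation}
(Roots of a monic integer polynomial that are rational are integers, so the linear factors are $x-\lambda_i$ with $\lambda_i\in\bbZ$.) The discriminant is then
\begin{equation}
    \Delta_f = \prod_{i<j}(\lambda_i-\lambda_j)^2 .
\end{equation}
It is a product of squares of nonzero integers, because $\Delta_f \neq 0$ forces the roots to be distinct. Hence $\lvert \Delta_f\rvert = 1$ holds if and only if $\lvert\lambda_i-\lambda_j\rvert = 1$ for every pair $i\neq j$.

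Now suppose $n = \deg(f)\geq 3$, and order three distinct roots as $\lambda_1<\lambda_2<\lambda_3$. Since they are distinct integers, $\lambda_3-\lambda_1 \geq 2$, so the pair $(1,3)$ contributes a factor of at least $4$ to $\Delta_f$. This contradicts $\lvert\Delta_f\rvert = 1$. Combined with the hypothesis $\deg(f)>1$, we conclude $\deg(f)=2$.

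The argument is elementary once Lemma \ref{lem: IrreducibleDegree1} is available. Realness of the roots is not even needed, since the roots are already forced to be integers; it is presumably included in the hypothesis to match the symmetric-matrix setting. The one point that deserves care is the passage from "irreducible factors of degree one in $\bbZ[x]$" to "integer roots". This uses monicity: each monic linear factor has the form $x-\lambda$ with $\lambda\in\bbZ$.
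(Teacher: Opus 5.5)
Your proof is correct and is essentially the paper's argument: Lemma \ref{lem: IrreducibleDegree1} gives linear factors, and $\lvert\Delta_f\rvert=1$ forces every pairwise root difference to have absolute value $1$, which is impossible with three or more roots. The only difference is that you use the integrality of the roots where the paper appeals to ``the structure of the real line''; as you correctly note, this makes the realness hypothesis redundant.
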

    \begin{proof}
        Lemma \ref{lem: IrreducibleDegree1} implies that $f$ factors in linear factors, and the formula $\Delta_f =  \prod_{i<j}  (\lambda_i -\lambda_j)^2$ implies that all roots of $f$ have to satisfy $\lvert \lambda_i - \lambda_j \rvert = 1$.
        By the structure of the real line, this is impossible if $n\geq 3$.  
    \end{proof}

    \begin{proof}[Proof of Proposition \ref{prop: Discpm1}]
        It follows from Lemma \ref{lem: Degree2} that $\lvert \Delta_{\bX} \rvert=1$ for a symmetric integer matrix $\bX\in \bbZ^{n\times n}$ with $n>1$ implies then $n=2$.
        In this case,   
        \begin{equation}
            \bX = \begin{pmatrix}
                a & b \\ 
                b & c
            \end{pmatrix}.
        \end{equation}
        Then, $
            \phi_\bX(x) = x^2 - (a+c)x - b^2 + ac$ and  
        $
           \lvert  \Delta_{\bX} \rvert  = (a-c)^2 + 4b^2  
        $. 
        Hence, $\lvert \Delta_{\bX} \rvert=1$ if and only if $\lvert a-c \rvert=1$ and $b=0$. 
        \end{proof}

    \section{Numerical data for signed graphs and graphs with loops}\label{apx: NumericalSigned}
    We here repeat the experiments concerning rational cospectrality in Table \ref{tab: RationalCospectral}, but now consider the adjacency matrices of graphs with loops and signed graphs.

     \addtocontents{toc}{\SkipTocEntry}
    \subsection{Signed graphs}
   We consider a random symmetric matrix with the entries  $\{\bX_{i,j}: i\leq j \}$ uniform and independent from $\{-1,0,1\}$. 
    Table \ref{tab: RationalCospectralSignedZZ} reports on the number of samples where Algorithm \ref{alg:orth_matrices} finds a cospectral mate with entries in $\bbZ$, while Table \ref{tab: RationalCospectralSigned} restricts to cospectral mates that again have entries in $\{-1,0,1 \}$. 
    \begin{table}[h!]
  \centering
  \footnotesize
  \begin{tabular}{ccccc}
    \toprule
   $n$ & $\#$Cospectral in $\mathbb{Z}$ over $\mathbb{Q}$ & Avg. block size $\lvert \bQ \rvert$ & Avg. level $\ell$& Inconclusive \\
    \midrule
    5 & 150 & 4.205 & 4.586 & 51 \\
    6 & 270 & 4.914 & 7.694 & 32 \\
    7 & 369 & 5.690 & 22.416 & 16 \\
    8 & 400 & 6.807 & 35.120 & 2 \\
    9 & 404 & 6.396 & 37.608 & 1 \\
    10 & 322 & 6.730 & 36.746 & 0 \\
    11 & 268 & 6.014 & 6.378 & 1 \\
    12 & 219 & 6.084 & 7.675 & 0 \\
    13 & 135 & 5.825 & 3.666 & 1 \\
    14 & 94 & 5.588 & 2.748 & 1 \\
    15 & 58 & 4.558 & 2.274 & 1 \\
    16 & 31 & 4.407 & 2.169 & 0 \\
    17 & 28 & 4.654 & 2.231 & 0 \\
    18 & 19 & 4.171 & 2.086 & 1 \\
    19 & 7 & 4.308 & 2.154 & 0 \\
    20 & 10 & 4.211 & 2.158 & 3 \\
    25 & 0 & N/A & N/A & 1 \\
    50 & 0 & N/A & N/A & 2 \\
    \bottomrule
  \end{tabular} 
  \caption{The number of signed graphs among $1000$ random samples  where Algorithm \ref{alg:orth_matrices} with $M=10^4$ and $K = \bbQ$ finds an integer cospectral mate to the adjacency matrix, and orthogonal matrix statistics. 
  Computations that exceeding a time limit or could not be initiated due to a vanishing discriminant are reported under ``Inconclusive''. }
  \label{tab: RationalCospectralSignedZZ}
\end{table}

    \begin{table}[h!]
  \centering
  \footnotesize
  \begin{tabular}{ccccc}
    \toprule
   $n$&$\#$Cospectral in $\{-1,0,1\}$ over $\mathbb{Q}$&Avg. block size $\lvert \bQ \rvert$&Avg. level $\ell$&Inconclusive\\
    \midrule
    5 & 47 & 3.718 & 3.254 & 51 \\
    6 & 125 & 4.677 & 8.463 & 32 \\
    7 & 164 & 5.183 & 24.342 & 16 \\
    8 & 149 & 5.676 & 25.091 & 2 \\
    9 & 116 & 5.322 & 33.441 & 1 \\
    10 & 54 & 4.957 & 19.565 & 0 \\
    11 & 47 & 4.231 & 2.750 & 1 \\
    12 & 25 & 4.531 & 2.156 & 0 \\
    13 & 8 & 4.750 & 2.375 & 1 \\
    14 & 8 & 4.600 & 2.100 & 1 \\
    15 & 4 & 3.750 & 2.250 & 1 \\
    16 & 3 & 4.000 & 2.000 & 0 \\
    17 & 2 & 4.000 & 2.000 & 0 \\
    18 & 0 & N/A & N/A & 1 \\
    19 & 0 & N/A & N/A & 0 \\
    20 & 0 & N/A & N/A & 3 \\
    25 & 0 & N/A & N/A & 1 \\
    50 & 0 & N/A & N/A & 2 \\ 
    \bottomrule
  \end{tabular} 
  \caption{The number of signed graphs among $1000$ random samples  where Algorithm \ref{alg:orth_matrices} finds a $\{-1,0,1 \}^{n\times n}$-valued cospectral mate. 
  The remainder of the setup is as in the caption of Table \ref{tab: RationalCospectralSignedZZ}. }
  \label{tab: RationalCospectralSigned}
\end{table}
\addtocontents{toc}{\SkipTocEntry}
\subsection{Unsigned graphs (possibly with loops)}

   We consider a random symmetric matrix with the entries  $\{\bX_{i,j}: i\leq j \}$ uniform and independent from $\{0,1\}$. 
   Table \ref{tab: RationalCospectralLoopyZZ} reports on the number of samples where Algorithm \ref{alg:orth_matrices} finds a cospectral mate with entries in $\bbZ$. 
\begin{table}[h!]
  \centering
  \footnotesize
  \begin{tabular}{ccccc}
    \toprule
    $n$ & $\#$Cospectral in $\mathbb{Z}$ over $\mathbb{Q}$ & Avg. block size $\lvert \bQ \rvert$ & Avg. level $\ell$& Inconclusive \\
    \midrule
    5 & 53 & 4.344 & 3.958 & 211 \\
    6 & 149 & 4.920 & 5.243 & 155 \\
    7 & 205 & 5.669 & 11.850 & 102 \\
    8 & 259 & 6.045 & 11.853 & 67 \\
    9 & 279 & 6.732 & 137.320 & 47 \\
    10 & 266 & 6.499 & 317.561 & 28 \\
    11 & 235 & 6.676 & 62.949 & 13 \\
    12 & 157 & 6.735 & 31.331 & 9 \\
    13 & 124 & 6.307 & 52.952 & 3 \\
    14 & 75 & 7.265 & 78.273 & 4 \\
    15 & 56 & 4.850 & 4.009 & 3 \\
    16 & 29 & 4.724 & 3.931 & 1 \\
    17 & 22 & 4.238 & 2.119 & 1 \\
    18 & 10 & 5.000 & 12.353 & 2 \\
    19 & 6 & 4.000 & 2.000 & 1 \\
    20 & 3 & 4.000 & 2.000 & 2 \\
    25 & 1 & 4.000 & 2.000 & 0 \\
    50 & 0 & N/A & N/A & 2 \\
    \bottomrule
  \end{tabular}
  \caption{The number of graphs (possibly with loops) among $1000$ random samples  where Algorithm \ref{alg:orth_matrices} finds an integer cospectral mate.
  The remainder of the setup is as in the caption of Table \ref{tab: RationalCospectralSignedZZ}.}
  \label{tab: RationalCospectralLoopyZZ}
\end{table}

Table \ref{tab: RationalCospectralSigned} restricts to cospectral mates in that again correspond to unsigned graphs (possibly with loops). 
Finally, Table \ref{tab: RationalCospectralLoopy} considers cospectrality to signed graphs.

    \begin{table}[h!]
  \centering
  \footnotesize
  \begin{tabular}{ccccc}
    \toprule
   $n$&$\#$Cospectral in $\{0,1\}$ over $\mathbb{Q}$&Avg. block size $\lvert \bQ \rvert$&Avg. level $\ell$&Inconclusive\\
    \midrule
     5 & 10 & 4.286 & 3.143 & 211 \\
    6 & 50 & 4.494 & 4.071 & 155 \\
    7 & 76 & 5.170 & 14.717 & 102 \\
    8 & 123 & 5.560 & 11.632 & 67 \\
    9 & 142 & 5.567 & 93.372 & 47 \\
    10 & 145 & 5.931 & 22.121 & 28 \\
    11 & 149 & 5.886 & 57.270 & 13 \\
    12 & 97 & 5.725 & 11.775 & 9 \\
    13 & 83 & 5.443 & 6.402 & 3 \\
    14 & 59 & 6.656 & 77.021 & 4 \\
    15 & 48 & 4.909 & 5.255 & 3 \\
    16 & 24 & 4.480 & 4.120 & 1 \\
    17 & 22 & 4.435 & 2.217 & 1 \\
    18 & 8 & 5.889 & 21.556 & 2 \\
    19 & 6 & 4.000 & 2.000 & 1 \\
    20 & 3 & 4.000 & 2.000 & 2 \\
    25 & 1 & 4.000 & 2.000 & 0 \\
    50 & 0 & N/A & N/A & 2 \\
    \bottomrule
  \end{tabular} 
  \caption{The number of graphs (possibly with loops) among $1000$ random samples  where Algorithm \ref{alg:orth_matrices} finds a $\{0,1 \}^{n\times n}$-valued cospectral mate. 
  The remainder of the setup is as in the caption of Table \ref{tab: RationalCospectralSignedZZ}. }
  \label{tab: RationalCospectralSigned}
\end{table}

    \begin{table}[h!]
  \centering
  \footnotesize
  \begin{tabular}{ccccc}
    \toprule
   $n$&$\#$Cospectral in $\{-1,0,1\}$ over $\mathbb{Q}$&Avg. block size $\lvert \bQ \rvert$&Avg. level $\ell$&Inconclusive\\
    \midrule 
     5 & 13 & 4.278 & 3.111 & 211 \\
    6 & 61 & 4.565 & 6.037 & 155 \\
    7 & 91 & 5.329 & 13.193 & 102 \\
    8 & 133 & 5.714 & 12.028 & 67 \\
    9 & 154 & 5.765 & 73.085 & 47 \\
    10 & 153 & 6.033 & 19.811 & 28 \\
    11 & 160 & 5.966 & 60.461 & 13 \\
    12 & 104 & 5.840 & 11.165 & 9 \\
    13 & 87 & 5.435 & 7.344 & 3 \\
    14 & 61 & 7.256 & 79.288 & 4 \\
    15 & 48 & 4.847 & 5.034 & 3 \\
    16 & 25 & 4.500 & 3.929 & 1 \\
    17 & 22 & 4.417 & 2.208 & 1 \\
    18 & 8 & 5.889 & 21.556 & 2 \\
    19 & 6 & 4.000 & 2.000 & 1 \\
    20 & 3 & 4.000 & 2.000 & 2 \\
    25 & 1 & 4.000 & 2.000 & 0 \\
    50 & 0 & N/A & N/A & 2 \\
    \bottomrule
  \end{tabular} 
  \caption{The number of graphs (possibly with loops) among $1000$ random samples  where Algorithm \ref{alg:orth_matrices} finds a $\{-1,0,1 \}^{n\times n}$-valued cospectral mate. 
  The remainder of the setup is as in the caption of Table \ref{tab: RationalCospectralSignedZZ}. }
  \label{tab: RationalCospectralLoopy}
\end{table}

\end{document}